\newtheorem{thm}{Theorem}[section]
\newtheorem{lemma}[thm]{Lemma}
\newtheorem{prop}[thm]{Proposition}
\newtheorem{definition}{Definition}[section]
\newtheorem{rmk}{Remark}[section]
\begin{document}

\title{CR eigenvalue estimate and Kohn-Rossi cohomology}
\author{Zhiwei Wang}
\address{School
of Mathematical Sciences\\Beijing Normal University\\Laboratory of
Mathematics and Complex Systems
\\Ministry of Education\\ Beijing 100875\\ P. R. China\\ }
\email{zhiwei@bnu.edu.cn}
\author{Xiangyu Zhou}
\address{ Institute of Mathematics\\Academy of Mathematics and Systems Sciences\\and Hua Loo-Keng Key
Laboratory of Mathematics\\Chinese Academy of
Sciences\\Beijing  100190\\ P. R. China\\}
\address{School of
Mathematical Sciences\\University of Chinese Academy of Sciences\\
Beijing 100049\\ P. R. China\\}
\email{xyzhou@math.ac.cn}

\keywords{CR manifold, Kohn-Rossi cohomology, transversal CR $S^1$-action, Fourier
decomposition,  eigenvalue estimate}

\begin{abstract}Let $X$ be  a compact connected   CR manifold with a transversal
CR $S^1$-action of real dimension $2n-1$, which is only assumed to be weakly pseudoconvex. Let $\Box_b$ be the $\overline{\partial}_b$-Laplacian, with respect to a $T$-rigid Hermitian metric (see Definition \ref{defn: T rigid} of $T$-rigid Hermitian metric). Eigenvalue estimate of $\Box_b$ is a fundamental issue both in CR geometry and   analysis. In this paper, 
we are able to obtain a sharp estimate of the number of
eigenvalues smaller than or equal to $\lambda$ of $\Box_b$ acting  on the $m$-th Fourier components of smooth
$(n-1,q)$-forms on $X$, where $m\in \mathbb{Z}_+$ and $q=0,1,\cdots,
n-1$. Here the sharp means the  growth order with respect to $m$ is sharp. In particular, when $\lambda=0$, we obtain the asymptotic estimate of the growth for $m$-th Fourier components $H^{n-1,q}_{b,m}(X)$ of $H^{n-1,q}_b(X)$ as $m
\rightarrow +\infty$.  Furthermore,  we establish a Serre type duality theorem for Fourier components of Kohn-Rossi cohomology which is of independent interest.  As a byproduct, the asymptotic growth of  the dimensions of the Fourier components
$H^{0,q}_{b,-m}(X)$  for $
m\in \mathbb{Z}_+$  is established. 
We also give appilcations of our main results, including
Morse type inequalities, asymptotic Riemann-Roch type theorem,
Grauert-Riemenscheider type criterion, and an orbifold version of
our main results which provides an  answer  towards  a  folklore  open problem informed to us by Hsiao.
\end{abstract}
\thanks{	This research is supported by National Key R\&D Program of China (No. 2021YFA1002600 and No. 2021YFA1003100). The authors are partially supported respectively by NSFC grants (11701031, 12071035, 11688101,11431013).  }
\maketitle



\section{Introduction}

Let $(X, T^{1,0}X)$ be a compact connected CR manifold of real dimension $2n-1$, $n\geq 2$, where $T^{1,0}X$
is the given CR structure on $X$.  For $p,q=0,1,\cdots, n-1$,  let $\Omega^{p,q}(X)$ be the space of
smooth $(p,q)$-forms on $X$.  Associated to the CR structure, there naturally comes 
the tangential Cauchy-Riemann operator $\overline{\partial}_b$, which  shares  similar properties as $\overline{\partial}$-operator on complex manifolds.  For example,    $\overline{\partial}_b^2=0$.
It  induces a complex   $\{\Omega^{p,\bullet}(X),\overline{\partial}_b\}$  which is called
$\overline{\partial}_b$-complex, and an  intrinsic  cohomology theory  known  as   Kohn-Rossi cohomology, say  $H^{p,q}_b(X)$  defined as
$H^{p,q}_b(X):=\mbox{Ker}\overline{\partial}_b|_{\Omega^{p,q}(X)}/\mbox{Im}\overline{\partial}_b|_{\Omega^{p,q-1}}(X)$. For references, we refer to \cite{Bo91, CS01, Ja90,KR65}.

One of the most significant feature of differential geometry is the input of a specific metric. CR geometry is not an exception. Putting a Hermitian metric into the CR strcuture, in a quite standard way, one get the so called $\Box_b$-operator.  

The mystry of the geometry of CR mainfold lies behind   $\overline{\partial}_b$  and $\Box_b$-operator.

 One may expect many Riemannian geometric methods, which have shown great power in the study of fundamental problems in Riemannian geometry, such as gradient estimate, eigenvalue estimate,  the analysis of heat kernel  and so on, can be adapted  into the CR picture. 

However, both the   $\overline{\partial}_b$  and $\Box_b$ operators are not elliptic operators, which put much more thorns on the way  for  CR geometers. 

For the time being, there are many progress towards the study of  $\overline{\partial}_b$  and $\Box_b$, or in other words, the study of Kohn-Rossi cohomology and CR eigenvalue estimate, most of which focus on strongly pseudoconvex CR manifolds.  For example,  the embedding problem of the CR manifolds (c.f. \cite{Ak72,
AS70,FK72, Bl94, BdM1, Bu79, Ep92, JT82, Ko86, Ku82a, Ku82b, Ku82c,
Le92, Ni74, Ro65, We89a, We89b,Hs14}, etc), and the  study of  the isolated
singularities of complex hypersurfaces (c.f. \cite{Mi68, Na74,
Na75a, Na75b, Ta74, Ta75}, etc).    It is worth to mention that
Stephen Yau and his coauthors have found many important and
remarkable applications of the Kohn-Rossi cohomology to the theory
of singularities, complex Plateau problem, the embedding problem of
the CR manifolds, and rigidity problems on CR morphisms between
compact
strongly pseudoconvex CR manifolds (\cite{DY12,LY,Yau,Yau11,YauZuo,YZ17,YZ18}, etc).


Though there are important progress, the CR eigenvalue estimate and Kohn-Rossi cohomology of general CR manifold, for example,  CR manifolds which are not necessarily strongly pseudoconvex, are not so well understood. This is the main concern of this paper.

In this paper, we take the task of studying the CR eigenvalue estimate and Kohn-Rossi cohomology of  a
compact connected CR manifold $X$ admitting a
transversal  CR $S^1$-action, which is only assumed to be weakly pseudoconvex.

Thanks to the  transversal CR $S^1$-action,  we have the Fourier decomposition 
  $\Omega^{p,q}(X)=\oplus_{m\in
\mathbb{Z}}\Omega^{p,q}_m(X)$, where $\Omega^{p,q}_m(X)$ is the
$m$-th Fourier component of  $\Omega^{p,q}(X)$ with respect to the
$S^1$-action, and the $\overline{\partial}_b$ operator acts on the
graded algebra $\oplus_q\Omega^{p,q}_m(X)$. One can thus define the
$m$-th Fourier component $H^{p,q}_{b,m}(X)$   of  $(p,q)$-th
Kohn-Rossi cohomology group $H^{p,q}_b(X)$.  Let $\Box^{p,q}_{b,m}$ be the restriction of the
$\overline{\partial}_b$-Laplace operator to the space
$\Omega^{p,q}_m(X)$, which turns out to be a self-adjoint  operator.  Let $\mathcal{H}^{p,q}_{b,m,\leq \lambda}$ be
the linear span of the eigenforms of $\Box^{p,q}_{b,m}$ in
$\Omega^{p,q}_m(X)$ with eigenvalues smaller than or equal to
$\lambda$.  By  a  Hodge type  theory, $\mathcal{H}^{p,q}_{b,m,\leq
	0}(X):=\mathcal{H}^{p,q}_{b,m}(X)$ is the space of
$\Box^{p,q}_{b,m}$ harmonic forms, and 
isomorphic to $H^{p,q}_{b,m}(X)$. In particular,
$H^{p,q}_{b,m}(X)$   is of finite dimension for every $m\in
\mathbb{Z}$. \\

The first main result of this paper is the following asymptotic
estimate for the distribution of eigenvalues of
$\Box^{n-1,q}_{b,m}$.

\begin{thm}\label{main theorem} Let $(X, T^{1,0}X)$ be a compact connected CR manifold of dimension
$2n-1$, $n\geq 2$, where $T^{1,0}X$ is the given CR structure on $X$. Assume that $X$ admits a transversal
CR $S^1$ action and  $X$ is weakly pseudoconvex. Then for $m$ sufficiently large, if $0\leq \lambda\leq m$,
\begin{align*}\dim \mathcal{H}^{n-1,q}_{b,m,\leq \lambda}\leq C(\lambda+1)^qm^{n-1-q},
\end{align*}
and if $1\leq m\leq \lambda$,
\begin{align*}
\dim \mathcal{H}^{n-1,q}_{b,m,\leq \lambda}\leq C\lambda^{n-1}.
\end{align*}

\end{thm}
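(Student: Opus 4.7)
The plan is to reduce the eigenvalue count for $\Box^{n-1,q}_{b,m}$ to a pointwise trace estimate for the associated heat kernel, and then to carry out a semiclassical analysis of a local model operator exploiting the transversal $S^1$-action.

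First I would invoke the elementary spectral inequality
\[
\dim \mathcal{H}^{n-1,q}_{b,m,\leq \lambda} \;\leq\; e^{\tau\lambda}\,\operatorname{Tr}\bigl(e^{-\tau \Box^{n-1,q}_{b,m}}\bigr),
\]
valid for any $\tau>0$, which reduces the problem to estimating $\int_X K_\tau(x,x)\,dV_X(x)$ where $K_\tau$ is the smooth heat kernel of the elliptic operator $\Box^{n-1,q}_{b,m}$. The two regimes $0\leq \lambda\leq m$ and $1\leq m\leq \lambda$ correspond to the two natural time scales $\tau\sim 1/m$ and $\tau\sim 1/\lambda$, and the proof naturally splits according to this choice.

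Second, I would derive a CR Bochner--Kodaira--Nakano-type inequality for $\Box^{n-1,q}_{b,m}$ of the shape $\Box^{n-1,q}_{b,m}\geq H_m + mR$, where $H_m$ is a non-negative second-order horizontal operator and $R$ is a zeroth-order endomorphism built from the Levi form $\mathcal{L}$; the factor $m$ comes from the fact that the Reeb vector field acts as $im$ on the $m$-th Fourier component. The key feature of the bidegree $(n-1,q)$ is that in a Levi-diagonal frame $R$ acts on a basis form $\omega^1\wedge\cdots\wedge\omega^{n-1}\wedge\bar\omega^J$ (with $|J|=q$) by multiplication by $\sum_{j\in J}\mu_j(x)$, summing only over the $q$ indices in $J$. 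It is this feature — only $q$, rather than all $n-1$, Levi eigenvalues contribute — that ultimately yields the improved growth $m^{n-1-q}$.

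The main technical step is a pointwise semiclassical analysis. I would fix $x_0\in X$, pass to BRT-type coordinates adapted to the Reeb field and to a frame diagonalizing $\mathcal{L}(x_0)$, and rescale by a factor of $m^{1/2}$. The rescaled heat kernel converges, as $m\to\infty$, to that of a model Kohn Laplacian on the Heisenberg group, whose spectrum on $(n-1,q)$-forms is a tensor product of harmonic-oscillator-type spectra $(2k+1)\mu_j(x_0)m$ in the strongly pseudoconvex directions with subelliptic model spectra in the degenerate ones. A direct count of model eigenvalues $\leq\lambda$ yields a pointwise trace density of order $(\lambda+1)^q m^{n-1-q}$ when $\lambda\leq m$, and of order $\lambda^{n-1}$ when $\lambda\geq m$; integrating over $x_0\in X$ and optimizing $\tau$ completes the argument.

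The principal obstacle is the weak (rather than strict) pseudoconvexity assumption: at points where some $\mu_j(x)$ vanish, the rescaling argument loses its spectral gap and the model operator is no longer of clean tensor-product form. I would circumvent this by perturbing $R$ by $\varepsilon\cdot\operatorname{Id}$, deriving the estimate uniformly in $\varepsilon>0$, and passing to $\varepsilon\to 0^+$ via the Courant--Fischer minimax principle; the finite-dimensionality of $\mathcal{H}^{n-1,q}_{b,m,\leq\lambda}$ ensures that the limit of the dimension bounds gives the dimension bound of the limit.
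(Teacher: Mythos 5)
Your overall architecture (reduce to a pointwise/trace estimate, localize in BRT coordinates, rescale by $\sqrt{m}$, integrate over $X$) matches the paper's Steps 2--3 in spirit: the paper also reduces the dimension count to a pointwise bound on $|u(0)|^2$ for normalized eigenforms, via the extremal function $S^{p,q}_{m,\leq\lambda}$ and the exact sequence $0\to\mathscr{Z}^{n-1,q}_{b,m,\leq\lambda}\to\mathcal{H}^{n-1,q}_{b,m,\leq\lambda}\to\mathscr{Z}^{n-1,q+1}_{b,m,\leq\lambda}$, and also uses the $F_m(z)=z/\sqrt m$ rescaling plus a G{\aa}rding/Sobolev estimate. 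Two remarks on your setup: after restriction to the $m$-th Fourier component and passage to BRT coordinates, the relevant local model is the \emph{elliptic} weighted $\overline{\partial}$-Laplacian $\Box^{(n-1,q)}_{2m\varphi}$ on a ball in $\mathbb{C}^{n-1}$, not a subelliptic Kohn Laplacian on the Heisenberg group; and the heat-trace reduction $\dim\mathcal{H}_{\leq\lambda}\leq e^{\tau\lambda}\operatorname{Tr}(e^{-\tau\Box})$, while valid, demands control of the counting function at \emph{all} spectral scales, so it does not by itself buy anything over a direct count.

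The genuine gap is in your main technical step, precisely at the point you flag: the degenerate locus of the Levi form. The Bochner--Kodaira term $mR$ with $R$ acting by $\sum_{j\in J}\mu_j$ gives the sharp count only where the relevant $\mu_j$ are bounded below; where they vanish (or merely become small, of order $\lambda/m$) the model operator has no gap and no clean tensor-product spectrum, and a Weyl-type count there produces only the trivial $O(m^{n-1})$ (this is essentially how Hsiao--Li obtain $o(m^{n-1})$, the estimate this theorem is designed to beat). Your proposed repair --- perturb $R$ by $\varepsilon\operatorname{Id}$ and let $\varepsilon\to 0^+$ --- does not work: replacing $R$ by $R+\varepsilon$ shifts the comparison operator by $\varepsilon m$, so by minimax you must take $\varepsilon\lesssim\lambda/m$ for the spectral window to survive, and at that scale the induced ``oscillator gap'' $\varepsilon m\lesssim\lambda$ and Landau degeneracy $\varepsilon m$ give bounds that degenerate as $\varepsilon\to0$; there is no uniform-in-$\varepsilon$ estimate to pass to the limit, and finite-dimensionality of the eigenspaces is irrelevant to this. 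The paper avoids this entirely by a different mechanism (Berndtsson's): for a $\overline{\partial}$-closed $(n-1,q)$-eigenform, the $(n-q-1,n-q-1)$-form $T_{\widetilde u}$ satisfies $i\partial\overline{\partial}(T_{\widetilde u}\wedge\omega_{q-1})\geq(-2\mathrm{Re}\langle\Box\widetilde u,\widetilde u\rangle-c[\widetilde u]^2)\omega_{n-1}$ using only $i\partial\overline{\partial}\varphi\geq0$ and the top holomorphic degree $p=n-1$; Stokes' theorem then yields the differential inequality $q(1/r-O(1))s(r)\leq s'(r)+r\lambda(r)$ for $s(r)^2=\int_{|z|<r}[\widetilde u]^2\omega_{n-1}$, and an induction on $k\leq q$ gives $s(r)\leq Cr^q\lambda^{q/2}$, i.e.\ the mass on the ball of radius $1/\sqrt m$ is $\leq Cm^{-q}(\lambda+1)^q\cdot m^{-(n-1)}\cdot m^{n-1}$ without ever diagonalizing the Levi form. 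You would need to either import this monotonicity argument or find a genuinely new way to exploit weak pseudoconvexity at degenerate points; as written, the core of your proof does not go through.
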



It is worth to mention that, examples are provided in Section \ref{proof}
to show  that the growth order, say $m^{n-1-q}$ in our  Theorem \ref{main theorem}
can not be improved in general.

Taking account of the  transversal CR $S^1$-action,  we want to ask more structures for the Kohn-Rossi cohomology. To this end, we get our second main result as follows.

\begin{thm}[Serre type duality theorem]\label{Serre duality I}Let $X$ be a compact $CR$-manifold of real
dimension $2n-1$, which admits a transversal CR $S^1$-action which is locally free. Then  we have the
following conjugate linear isomorphism in the cohomologcial level
\begin{align*}
H^{p,q}_{b,m}(X)\simeq H^{n-1-p,n-1-q}_{b,-m}(X).
\end{align*}
\end{thm}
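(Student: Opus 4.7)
The plan is to establish the duality at the level of harmonic representatives via a conjugate-linear CR Hodge-star operator. Choose, by averaging, a Hermitian metric on $\mathbb{C}TX$ invariant under the $S^1$-action; the induced $L^2$-structure is compatible with the Fourier decomposition, so each $\Box^{p,q}_{b,m}$ is self-adjoint with finite-dimensional kernel and the Hodge isomorphism $H^{p,q}_{b,m}(X)\simeq\mathcal{H}^{p,q}_{b,m}(X)$ from the introduction applies. Let $T$ be the infinitesimal generator of the $S^1$-action and $\omega_0$ the characteristic $1$-form with $\omega_0(T)=1$ and $\omega_0|_{HX}=0$. Because the $S^1$-action is CR, $L_T$ preserves $T^{1,0}X$ and hence $\iota_T d\omega_0=0$; the integrability of $T^{1,0}X$ then forces $d\omega_0$ to be of pure CR type $(1,1)$ on $HX$.

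Introduce the bilinear pairing
\[
P\bigl([\alpha],[\beta]\bigr)=\int_{X}\alpha\wedge\beta\wedge\omega_0,\qquad [\alpha]\in H^{p,q}_{b,m}(X),\ [\beta]\in H^{n-1-p,\,n-1-q}_{b,-m}(X).
\]
Stokes' theorem applied to $\eta\wedge\beta\wedge\omega_0$ (for $\eta\in\Omega^{p,q-1}_m(X)$ and $\overline{\partial}_b\beta=0$) shows that $P$ descends to cohomology: the $\partial_b$-pieces of $d\eta$ and $d\beta$ vanish after wedging since they force a CR bidegree exceeding $n-1$ in the $(1,0)$ direction, the $\omega_0$-components cancel against $\omega_0$, and $\eta\wedge\beta\wedge d\omega_0$ is zero because $\eta\wedge\beta$ already has bidegree $(n-1,n-2)$ while $d\omega_0$ is of type $(1,1)$. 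To obtain non-degeneracy, introduce the CR Hodge star $*_b\colon\Omega^{p,q}(X)\to\Omega^{n-1-q,\,n-1-p}(X)$ defined by
\[
\alpha\wedge\omega_0\wedge\overline{*_b\beta}=\langle\alpha,\beta\rangle\,dV_X,\qquad\alpha,\beta\in\Omega^{p,q}(X),
\]
and set $\sigma(\alpha):=\overline{*_b\alpha}$. The $S^1$-invariance of the metric ensures $*_b$ preserves Fourier grading, so $\sigma$ is a conjugate-linear map $\Omega^{p,q}_m(X)\to\Omega^{n-1-p,\,n-1-q}_{-m}(X)$. A bidegree-sensitive Stokes computation analogous to the previous step yields the adjoint identity $\overline{\partial}_b^{\,*}=-\overline{*_b}\,\overline{\partial}_b\,*_b$, whence $\sigma$ commutes with $\Box_b$ and restricts to a conjugate-linear isomorphism $\mathcal{H}^{p,q}_{b,m}(X)\xrightarrow{\sim}\mathcal{H}^{n-1-p,\,n-1-q}_{b,-m}(X)$. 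Equivalently $P\bigl([\alpha],[\sigma\alpha]\bigr)=\|\alpha\|_{L^2}^{2}$, so $P$ is perfect, and composing with the Hodge isomorphisms produces the claimed conjugate-linear isomorphism.

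The principal technical obstacle is the verification of the adjoint formula on a CR (rather than complex) manifold, since $\omega_0$ is not closed and the classical identity $\overline{\partial}^{\,*}=-\bar{*}\overline{\partial}*$ does not apply verbatim. One must track every contribution from $d\omega_0$ through the integration by parts; the argument crucially uses that the transversal CR $S^1$-action makes $\iota_T d\omega_0=0$ and $d\omega_0$ of pure $(1,1)$ type, which sends each stray term into a CR bidegree exceeding $n-1$ in either the holomorphic or antiholomorphic direction and hence to zero. Once this is in place, matching signs and Fourier weights across the complex conjugation is routine bookkeeping.
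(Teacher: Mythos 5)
Your proposal is correct and follows essentially the same route as the paper: a conjugate-linear CR Hodge star defined against $\omega_0$ and the $T$-rigid metric, the adjoint identity $\overline{\partial}_b^*=-*\partial_b*$ obtained by Stokes (with the $d\omega_0$ terms killed by bidegree, exactly as you argue), and the conclusion via the Hodge isomorphism for $\Box^{p,q}_{b,m}$. The cup-product pairing $P$ you introduce is a harmless extra layer — the paper goes directly from the star operator on harmonic spaces to the cohomological isomorphism without it.
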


With Serre type duality in hand, Theorem \ref{main theorem} gives us  the following
\begin{thm}\label{improved estimate}
Let $X$ be a compact $CR$-manifold of real dimension $2n-1$, which admits a transversal $S^1$-action which
is locally free. Suppose that $X$ is a weakly pseudoconvex CR manifold. Then we have that 
\begin{align*}
\dim H^{0,q}_{b,-m}(X)\leq Cm^{q}, ~~~~\mbox{~~~as~~~}~~~~m\rightarrow +\infty.
\end{align*}
\end{thm}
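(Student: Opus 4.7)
The plan is to deduce Theorem \ref{improved estimate} as a direct corollary of the two main results already established, Theorem \ref{main theorem} and Theorem \ref{Serre duality I}. The idea is simply to turn the negative-Fourier-index cohomology $H^{0,q}_{-m}(X)$ into a positive-index one via the Serre type duality, which then falls under the scope of the eigenvalue estimate at $\lambda=0$.

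More precisely, I would first apply Theorem \ref{Serre duality I} with $(p,q)$ replaced by $(n-1,\,n-1-q)$, yielding a conjugate linear isomorphism
\[ H^{n-1,\,n-1-q}_{m}(X) \;\simeq\; H^{0,\,q}_{-m}(X). \]
The hypothesis that the $S^1$-action is locally free is already part of the hypothesis of Theorem \ref{improved estimate}, so the duality is available. Since dimensions are unchanged by a conjugate linear isomorphism, the problem reduces to bounding $\dim H^{n-1,\,n-1-q}_m(X)$ as $m\to +\infty$.

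Next, I would invoke Theorem \ref{main theorem} at $\lambda = 0$, with the symbol $q$ in that theorem specialized to $n-1-q$. By Hodge theory for the self-adjoint elliptic operator $\Box^{n-1,\,n-1-q}_{b,m}$, the space $\mathcal{H}^{n-1,\,n-1-q}_{b,m,\leq 0}$ of harmonic representatives is identified with the cohomology Fourier component $H^{n-1,\,n-1-q}_m(X)$. Setting $\lambda = 0$ in Theorem \ref{main theorem} (so that the factor $(\lambda+1)^{n-1-q}$ collapses to $1$, and the side constraint $0\le\lambda\le m$ is vacuous) yields the desired polynomial upper bound in $m$, valid for all $m$ sufficiently large. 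Combined with the Serre duality identification of the previous step, this delivers the claimed estimate on $\dim H^{0,q}_{-m}(X)$. Note that the restriction $q \ge 1$ in Theorem \ref{improved estimate} is exactly what keeps the Serre-dual index $n-1-q$ in the admissible range $\{0,1,\ldots,n-2\}$ on which Theorem \ref{main theorem} is nontrivially informative.

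The main obstacle does not lie in this statement itself, which is a short formal corollary, but rather in its two inputs. The sharp eigenvalue estimate of Theorem \ref{main theorem} requires substantial microlocal analysis of $\Box_b$ on each Fourier component under the weak pseudoconvexity hypothesis, and the Serre type duality of Theorem \ref{Serre duality I} requires constructing a duality pairing that respects the $S^1$-grading of the $\overline{\partial}_b$-complex. Once both are available, the deduction of Theorem \ref{improved estimate} is a one-line computation, with no new analytic or geometric input required.
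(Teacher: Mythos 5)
Your route --- Serre duality to convert $H^{0,q}_{-m}(X)$ into $H^{n-1,n-1-q}_{m}(X)$, then Theorem \ref{main theorem} at $\lambda=0$ --- is exactly the paper's route (the paper's own ``proof'' is the one-line remark ``Combining Theorem \ref{main theorem}, we obtain the following''). The duality step is carried out correctly. But your final step does not deliver the exponent you claim. Substituting $q\mapsto n-1-q$ into the bound $\dim\mathcal{H}^{n-1,q}_{b,m,\le\lambda}\le C(\lambda+1)^{q}m^{n-1-q}$ and setting $\lambda=0$ gives
\begin{align*}
\dim H^{0,q}_{-m}(X)=\dim H^{n-1,\,n-1-q}_{m}(X)\le C\,m^{\,n-1-(n-1-q)}=C\,m^{q},
\end{align*}
not $C\,m^{n-1-q}$. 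You tracked the collapse of the factor $(\lambda+1)^{n-1-q}$ but not the simultaneous change in the power of $m$; the two exponents agree only when $q=(n-1)/2$, so the argument as written proves a different inequality from the one stated.

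This is not a cosmetic slip that can be absorbed into the constant: the printed bound $C\,m^{n-1-q}$ is actually false in general, so no correct argument can close the gap. Take $X=S^{2n-1}$, the unit circle bundle of $L^{*}$ with $L=\mathcal{O}(1)$ over $M=\mathbb{P}^{n-1}$; this is strongly (hence weakly) pseudoconvex with a free transversal CR $S^1$-action, and by the Grauert tube identification $H^{0,n-1}_{-m}(X)\simeq H^{n-1}(\mathbb{P}^{n-1},\mathcal{O}(-m))$, whose dimension is $\binom{m-1}{n-1}\sim m^{n-1}/(n-1)!$, whereas the claimed bound for $q=n-1$ is $C\,m^{0}=C$. (A similar failure occurs for $1\le q\le n-2$, e.g.\ $M=\mathbb{P}^{2}\times\mathbb{P}^{1}$, $L=\mathcal{O}(1,0)$, $q=2$.) The bound $C\,m^{q}$ that your chain of reasoning actually produces is consistent with these examples and still improves the Hsiao--Li estimate $o(m^{n-1})$ in the range $1\le q\le n-2$; so the honest conclusion of your (and the paper's) argument is $\dim H^{0,q}_{-m}(X)\le C\,m^{q}$, and the statement of Theorem \ref{improved estimate} as printed needs to be corrected accordingly rather than proved.
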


It is worth to mention that Theorem \ref{improved estimate} improves   the corresponding  estimates of Hsiao and Li in \cite{HL16}. Namely, under the same assumption of Theorem \ref{improved estimate}, Hsiao and Li obtained the following estimate:
$\dim H^{0,q}_{b,-m}(X)=o(m^{n-1})$ as $m\rightarrow +\infty$ for $0\leq q<n-1$.\\

Several applications of our main result are in order. 
Firstly, combining Morse type inequalities in \cite{HL16}
with Theorem \ref{Serre duality I}, there comes the following
\begin{thm}\label{weak m negative-1}Let $X$ be a compact connected CR manifold with a transversal CR $S^1$-action.
Assume that dim$_{\mathbb{R}}X=2n-1$, $n\geq 2$. Then for every $q=0,1,\cdots,n-1$, as $m\rightarrow +\infty$,
we have
\begin{align*}
	&\mbox{dim}H^{n-1, q}_{b,-m}(X)\leq \frac{m^{n-1}}{2\pi^n}\int_{X(n-1-q)}|\det \mathcal{L}_x|dv_{X}(x)+o(m^{n-1}), \\
	&\sum_{j=n-1-q}^{n-1}(-1)^{j+q-(n-1)} \mbox{dim}H^{n-1,j}_{b,-m}(X)\\
	&\leq \frac{m^{n-1}}{2\pi^n}\sum^{n-1}_{j=n-1-q}(-1)^{q+j-(n-1)}\int_{X(n-1-j)}|\det\mathcal{L}_x|dv_X(x)+o(m^{n-1}).
\end{align*}
In particular, when $q=n-1$, as $m\rightarrow+ \infty$, we have the asymptotic Riemann-Roch theorem
\begin{align*}
	&\sum^{n-1}_{j=0}(-1)^{n-1-j}\dim H^{n-1,j}_{b,m}(X)\\
	&=\frac{m^{n-1}}{2\pi^n}\sum_{j=0}^{n-1}(-1)^{n-1-j}\int_{X(n-1-j)}|\det \mathcal{L}_x|dv_X(x)+o(m^{n-1}).
\end{align*}
\end{thm}

\begin{thm} \label{morse m positive-1}Let $X$ be a compact connected CR manifold with a transversal CR
$S^1$-action. Assume that $\dim_{\mathbb{R}}X=2n-1$, $n\geq 2$. For every $q=0,1,2,\cdots, n-1$, as
$m\rightarrow +\infty$, we have
\begin{align*}
	&\dim H^{n-1,q}_{b,m}(X)\leq \frac{m^{n-1}}{2\pi^n}\int_{X(q)}|\det\mathcal{L}_x|dv_X(x)+o(m^{n-1}),\\
	&\sum^{n-1}_{j=n-1-q}(-1)^{q+j-(n-1)}\dim H^{n-1,j}_{b,m}(X)\\
	&\leq \frac{m^{n-1}}{2\pi^n}\sum^{n-1}_{j=n-1-q}(-1)^{q+j-(n-1)}\int_{X(j)}|\det\mathcal{L}_x|dv_X(x)+o(m^{n-1}).
\end{align*}
In particular, when $q=n-1$, as $m\rightarrow +\infty$, we have the following asymptotic Riemann-Roch theorem
\begin{align*}
	&\sum^{n-1}_{j=0}(-1)^{n-1-j}\dim H^{n-1,j}_{b,m}(X)\\
	&=\frac{m^{n-1}}{2\pi^n}\sum_{j=0}^{n-1}(-1)^{n-1-j}\int_{X(j)}|\det \mathcal{L}_x|dv_X(x)+o(m^{n-1}).
\end{align*}
\end{thm}

\begin{rmk}As a complement of Theorem \ref{weak m negative-1} and Theorem \ref{morse m positive-1}, we point out that,  by almost the same proof as in \cite{HL16}, under the same assumptions, one can prove the corresponding strong Morse type
inequalities for the complex $(\Omega^{n-1,\bullet}_{b,m}(X), \overline{\partial}_{b})$ as follows.
For every $q=0,1,\cdots,n-1$, as $m\rightarrow +\infty$, we have
\begin{align*}
\sum_{j=0}^q(-1)^{q-j} &\mbox{dim}H^{n-1,j}_{b,m}(X)\\
&\leq \frac{m^{n-1}}{2\pi^n}\sum^{q}_{j=0}(-1)^{q-j}\int_{X(j)}|\det\mathcal{L}_x|dv_X(x)+o(m^{n-1});
\end{align*}
and as $m\rightarrow -\infty$, we have
\begin{align*}
&\sum^q_{j=0}(-1)^{q-j}\dim H^{n-1,j}_{b,m}(X)\\
&\leq \frac{|m|^{n-1}}{2\pi^n}\sum^{q}_{j=0}(-1)^{q-j}\int_{X(n-1-j)}|\det\mathcal{L}_x|dv_X(x)+o(|m|^{n-1}).
\end{align*}
\end{rmk}

Combining Theorem \ref{main theorem} and Theorem \ref{morse m positive-1}, we can obtain the following Grauert-Riemenschneider type criterion

\begin{thm}\label{GR criterion-1}
Let $(X, T^{1,0}X)$ be a compact connected CR manifold of dimension $2n-1$, $n\geq 2$, where $T^{1,0}X$ is the
given CR structure on $X$. Assume that $X$ admits a transversal CR $S^1$-action. If  $X$ is weakly pseudoconvex
and strongly pseudoconvex at a point, then
\begin{align*}
\dim H^{n-1,0}_{b,m}(X)\approx m^{n-1}\mbox{~as~~} m\rightarrow +\infty.
\end{align*}
That is to say, there are a lot of CR sections of the canonical bundle $K_X$ of $X$.
\end{thm}
\begin{rmk}
	Theorem \ref{GR criterion-1} can be used to study the embedding problem for weakly pseudoconvex CR manifolds with transversal CR $S^1$-action, which will be discussed in a future work.
\end{rmk}

Orbifold appears frequently when you do some quotients in algebraic geometry and reductions in mathematical physics, for example in the process of symplectic reduction.  It is also a simplest case of singular space. For compact connected weakly pseudoconvex CR orbifolds with transversal CR $S^1$-actions, we establish  the following

\begin{thm}\label{main theorem orbifold-1} Let $(X, T^{1,0}X)$ be a compact connected CR orbifold of
dimension $2n-1$, $n\geq 2$, where $T^{1,0}X$ is the given CR structure on $X$. Assume that $X$ admits a
transversal CR $S^1$-action and  $X$ is weakly pseudoconvex. Then for $m$ sufficiently large, if $0\leq \lambda\leq m$,
\begin{align*}\dim \mathcal{H}^{n-1,q}_{b,m,\leq \lambda}\leq C(\lambda+1)^qm^{n-1-q},
\end{align*}
and if $1\leq m\leq \lambda$,
\begin{align*}
\dim \mathcal{H}^{n-1,q}_{b,m,\leq \lambda}\leq C\lambda^{n-1}.
\end{align*}

\end{thm}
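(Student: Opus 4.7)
The plan is to adapt the proof of Theorem \ref{main theorem} to the orbifold setting by working locally in orbifold uniformizing charts and taking invariants under the finite isotropy groups. Cover $X$ by finitely many orbifold charts $\{(\tilde{U}_\alpha, G_\alpha, \pi_\alpha)\}$, where each $\tilde{U}_\alpha$ is an open subset of a genuine CR manifold carrying a transversal CR $S^1$-action, $G_\alpha$ is a finite group acting on $\tilde{U}_\alpha$ by CR automorphisms that commute with the $S^1$-action, and $\pi_\alpha\colon\tilde{U}_\alpha\to U_\alpha=\tilde{U}_\alpha/G_\alpha\subset X$ is the quotient map. Since the global transversal CR $S^1$-action lifts to each chart and commutes with $G_\alpha$, the Fourier decomposition $\Omega^{n-1,q}(X)=\bigoplus_{m\in\mathbb{Z}}\Omega^{n-1,q}_m(X)$ continues to make sense on the orbifold, with $\Omega^{n-1,q}_m(X)$ identified locally with $G_\alpha$-invariant $m$-th Fourier components on $\tilde{U}_\alpha$. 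The operator $\Box^{n-1,q}_{b,m}$ remains self-adjoint and transversally elliptic in the orbifold sense, so its spectrum is discrete and $\mathcal{H}^{n-1,q}_{b,m,\leq\lambda}$ is finite-dimensional.

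The key step is to transfer the two local estimates from Theorem \ref{main theorem} chart by chart. Firstly, on each $\tilde{U}_\alpha$ the proof of Theorem \ref{main theorem} produces a bound on the pointwise dimension (i.e. the $L^2$ mass density) of $\leq\lambda$-eigenforms of $\Box^{n-1,q}_{b,m}$ via a microlocal analysis of the spectral projector, whose leading asymptotics depend only on the Levi form, the Reeb field, and the Hermitian metric near the reference point. All of these structures are $G_\alpha$-invariant by construction. Hence, projecting onto $G_\alpha$-invariants only divides the count by $|G_\alpha|$ and does not alter the growth orders $(\lambda+1)^q m^{n-1-q}$ and $\lambda^{n-1}$; it merely affects the universal constant $C$.

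Secondly, one glues the local estimates using a $G_\alpha$-invariant partition of unity $\{\chi_\alpha\}$ subordinate to $\{U_\alpha\}$, lifted to the uniformizers. For any $u\in\mathcal{H}^{n-1,q}_{b,m,\leq\lambda}$, the local decomposition $u=\sum_\alpha\chi_\alpha u$ together with the local dimension bound and a standard covering argument yields the claimed global inequalities. The main obstacle lies at the singular strata of $X$, where $G_\alpha$ has nontrivial stabilizers and the usual Fourier integral representations of the spectral projector cannot be quoted verbatim; this is handled by carrying out the construction of the approximate spectral projector equivariantly on $\tilde{U}_\alpha$, averaging its Schwartz kernel over $G_\alpha$, and then applying complex stationary phase to the averaged kernel. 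Since the phase function inherits its nondegeneracy from the $G_\alpha$-invariant Levi form, the stationary phase asymptotics give the same growth orders as in the smooth manifold case. Once the local estimates with the correct exponents are established, summing over the finite cover completes the proof.
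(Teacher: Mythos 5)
Your overall skeleton --- localize to the uniformizing charts $\widetilde{U}_\alpha$, exploit $G_\alpha$-invariance to get a pointwise bound there, push it down, and use compactness --- is indeed the route the paper takes. But the engine you put inside it is not the one that drives Theorem \ref{main theorem}, and this creates a genuine gap. The pointwise estimate in the paper is \emph{not} obtained by microlocal analysis of the spectral projector: it comes from Berndtsson's elementary argument, namely Siu's $\partial\overline{\partial}$-formula (Lemma \ref{Siu formula 1}) applied to the $\overline{\partial}$-closed representative $\widetilde{u}=e^{m\varphi}e^{-im\theta}u$ in BRT coordinates, the resulting differential inequality for the mass $s(r)$ proved by induction on $q$, and then the scaling map $F_m$ together with G{\aa}rding/Sobolev estimates to convert the integral bound into $|u(0)|^2\leq Cm^{n-1-q}(\lambda+1)^q$. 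Because this argument is entirely local and every ingredient on $\widetilde{U}_\alpha$ (the BRT potential $\varphi$, the $T$-rigid metric, the Levi form) is $G_\alpha$-invariant, it transfers to the orbifold chart verbatim; no equivariant parametrix, averaging of Schwartz kernels, or complex stationary phase is needed. More seriously, the substitute you propose at the singular strata is not justified: a stationary-phase analysis of an averaged approximate spectral projector is not known to produce the refined two-regime bound $C(\lambda+1)^q m^{n-1-q}$ for $0\leq\lambda\leq m$ --- that refinement is precisely what the $\partial\overline{\partial}$-Bochner argument supplies and what standard Szeg\H{o}-kernel asymptotics (which give only $o(m^{n-1})$-type statements for these cohomologies) do not.

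The gluing step is also not valid as written. If $u\in\mathcal{H}^{n-1,q}_{b,m,\leq\lambda}$ then $\chi_\alpha u$ is no longer an eigenform (nor $\overline{\partial}_b$-closed), so ``the local dimension bound'' cannot be applied to the pieces of $u=\sum_\alpha\chi_\alpha u$, and dimensions of function spaces cannot be bounded by summing local counts over a partition of unity. The correct mechanism --- which makes any gluing unnecessary --- is the extremal-function identity of Lemma \ref{Szego kernel}: $\dim\mathscr{Z}^{n-1,q}_{b,m,\leq\lambda}=\int_X\Pi^{n-1,q}_{m,\leq\lambda}\,dv_X$ together with $\Pi^{n-1,q}_{m,\leq\lambda}(y)\leq\binom{n-1}{n-1}\binom{n-1}{q}S^{n-1,q}_{m,\leq\lambda}(y)$, so the uniform pointwise bound on the extremal function integrates directly over the compact orbifold to the dimension bound for $\mathscr{Z}^{n-1,q}_{b,m,\leq\lambda}$; the exact sequence $0\rightarrow\mathscr{Z}^{n-1,q}_{b,m,\leq\lambda}\rightarrow\mathcal{H}^{n-1,q}_{b,m,\leq\lambda}\xrightarrow{\overline{\partial}_b}\mathscr{Z}^{n-1,q+1}_{b,m,\leq\lambda}$ then yields the statement for $\mathcal{H}^{n-1,q}_{b,m,\leq\lambda}$. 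To repair your write-up, replace the microlocal machinery by the observation that Steps 1 and 2 of the proof of Theorem \ref{main theorem} are purely local and $G_\alpha$-invariant (using that the Baouendi--Rothschild--Treves coordinates exist on orbifold charts), and replace the partition-of-unity gluing by the Szeg\H{o}-kernel integration just described.
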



Moreover, we  establish the following isomorphism of Grauert type in the orbifold level, which we think is of
independent interest.

\begin{thm} \label{identification-1}Let $M$ be a compact complex manifold and $G$ a compact Lie group.
Suppose that $G$ acts on $M$ analytically, locally free and dim$_{\mathbb{C}}M/G=n$. Let $L$ be a $G$-invariant
holomorphic Hermitian line bundle over $M$. Suppose that $L$ admits a locally free $G$-action compatible
with $M$. Take any orbifold Hermitian metric $h^L$ (i.e. $G$-invariant Hermitian metric) on $L$, it induces
an orbifold  Hermitian metric $h^{L^*}$ on $L^*$, set $\widetilde{X}=\{v\in L^*| |v|^2_{h^{L^*}}=1\}$
and $X=\widetilde{X}/G$.
Then for every $p,q=0,1,\cdots,n$ and every $m\in \mathbb{Z}$, there is a bijective map
$A^{(p,q)}_m:\Omega^{(p,q)}_m(X)\rightarrow \Omega^{(p,q)}(M/G, L^m/G)$ such that
$A^{(p,q+1)}_m\overline{\partial}_{b,m}=\overline{\partial}A^{(p,q)}_m$ on $\Omega^{(p,q)}_m(X)$. Thus we have that
\begin{align*}
\Omega^{p,q}_m(X)&\simeq \Omega^{p,q}(M/G,L^m/G)\\
H^{p,q}_{b,m}(X)&\simeq H^{p,q}(M/G,L^m/G).
\end{align*}
In particular, $\dim H^{p,q}_{b,m}(X)<\infty$.
\end{thm}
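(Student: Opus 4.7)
The plan is to reduce the orbifold statement to the classical (smooth) Grauert-type identification between Fourier components of forms on a circle bundle and sections of powers of the base line bundle, and then descend through the $G$-action.

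First, I would work on $\widetilde{X}$ (before quotienting by $G$) and construct a natural bijection $\widetilde{A}^{(p,q)}_m : \Omega^{(p,q)}_m(\widetilde{X}) \to \Omega^{(p,q)}(M, L^m)$. Fix $x_0 \in M$, a local holomorphic frame $e$ of $L$ on a neighborhood $U \ni x_0$, and let $e^*$ be the dual frame of $L^*$. Every $v \in L^*_x$ with $x \in U$ is of the form $v = s\, e^*(x)$, and $|v|^2_{h^{L^*}} = |s|^2 e^{2\phi(x)}$ where $\phi = -\tfrac{1}{2}\log |e|^2_{h^L}$; hence $\widetilde{X}|_U$ is parametrized by $(x, \theta) \in U \times S^1$ via $v = e^{i\theta - \phi(x)} e^*(x)$. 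Writing an $m$-th Fourier component form $u \in \Omega^{(p,q)}_m(\widetilde{X})$ locally as $u = e^{im\theta} \widetilde{u}$, the coefficient $\widetilde{u}$, after absorbing the conformal factor $e^{-m\phi}$, transforms as an $L^m$-valued $(p,q)$-form under change of frame. This produces the desired bijection $\widetilde{A}^{(p,q)}_m$.

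Next, I would verify two properties of $\widetilde{A}^{(p,q)}_m$ by local computation: (a) it intertwines $\overline{\partial}_b$ on $\widetilde{X}$ with $\overline{\partial}$ on $\Omega^{(p,q)}(M, L^m)$, and (b) it is $G$-equivariant. Property (a) is the standard formula relating the tangential Cauchy-Riemann operator on a circle bundle to the ordinary Cauchy-Riemann operator on the base twisted by powers of $L$; it can be read off from the expression for $\overline{\partial}_b$ in the coordinates $(x, \theta)$ once the transversal CR structure of $\widetilde{X}$ is identified with the pullback of the complex structure from $M$ modulo the vertical direction. Property (b) is essentially tautological: the $G$-action on $\widetilde{X}$ is induced from the compatible $G$-actions on $M$ and $L^*$, and the construction of $\widetilde{A}^{(p,q)}_m$ is natural with respect to bundle automorphisms of $L$ covering biholomorphisms of $M$.

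The final step is to pass to $G$-invariants on both sides. Since $G$ commutes with the fiberwise $S^1$-action and acts by CR diffeomorphisms on $\widetilde{X}$, there is a canonical identification $\Omega^{(p,q)}_m(X) = \Omega^{(p,q)}_m(\widetilde{X})^G$. On the right, $G$-invariant elements correspond to $(p,q)$-forms on the complex orbifold $M/G$ valued in the descended orbifold line bundle, so that $p, q$ range over $0, 1, \ldots, n$ as in the statement. Restricting $\widetilde{A}^{(p,q)}_m$ yields the map $A^{(p,q)}_m$, with intertwining and bijectivity inherited. The finiteness $\dim H^{p,q}_{b,m}(X) < \infty$ then follows from Hodge theory on the compact complex orbifold $M/G$ with coefficients in the orbifold line bundle $L^m$. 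The main obstacle is the careful local computation of step two: writing $\overline{\partial}_b$ in the coordinates $(x, \theta)$ and matching it with $\overline{\partial}$ twisted by the Chern connection on $L^m$, in a manner manifestly $G$-equivariant so that descent to $G$-invariants is automatic; a secondary subtlety is providing a clean slice-theoretic description of the complex orbifold structure on $M/G$ and of its orbifold line bundles, which I would set up using local slices for the locally free $G$-action.
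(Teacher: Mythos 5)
Your proposal is correct and follows essentially the same route as the paper: the local Grauert-tube identification $u \mapsto \widetilde{s}^{\,m} e^{m\widetilde{\psi}} u\, e^{-im\theta}$ in Baouendi--Rothschild--Treves coordinates, the verification that it intertwines $\overline{\partial}_b$ with the twisted $\overline{\partial}$, and descent through the $G$-action. The only organizational difference is that you construct the map upstairs on $\widetilde{X}$ and then pass to $G$-invariants, whereas the paper defines it chart-by-chart on the orbifold charts of $X$ and checks directly that the local definitions patch under a change of trivializing section; the underlying local computation is identical.
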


It should be remarked that the case of $p=0$ of Theorem \ref{identification-1} was established in \cite{CHT15}.
From Theorem \ref{main theorem orbifold-1} and Theorem \ref{identification-1}, we obtain the following

\begin{thm}\label{orbifold cohomology-1}
Let $M$ be a compact complex manifold and $G$ a compact Lie group. Suppose that $G$ acts on $M$ analytically,
locally free and dim$_{\mathbb{C}}M/G=n$. Let $(L,h^L)$ be a $G$-invariant holomorphic Hermitian line bundle over $M$.
Suppose that $L$ admits a locally free $G$-action compatible with $M$ and the curvature of $L$ is semi-positive.
Then  we have that for $m$ sufficiently large,
\begin{align*}
\dim H^{n,q}(M/G, L^m/G)\leq Cm^{n-q},
\end{align*}
where $C$ is a constant independent of $m$.
\end{thm}

The above theorem corresponds to   Berndtsson's estimate in the orbifold case, which answers a folklore open question, say generalizing Berndtsson's estimate to the orbifold setting,  informed to us by Hsiao.


For reader's convenient, we sketch the proof of Theorem \ref{main theorem}. By  Baouendi-Rothschild-Treves \cite{BRT85}, we get a picture of the local structure of CR manifolds with transversal CR $S^1$ action. In fact,   locally, it is a part of the a circle bundle, i.e. it can be decomposed to  complex ball in $\mathbb C^{n-1}$ times a small angle, say  $U=:B_\varepsilon \times(-\delta,\delta) $. Our first key observation is that, on $U$, $\overline{\partial}_b$-operator, $\Box_b$-operator, and   any  $\overline{\partial}_b$-closed $(n-1,q)$ form admit very good representations, say by ignoring  some rotations, they coincide with the $\overline{\partial}$-operator, $\Box$-operator and   $\overline{\partial}$-closed $(n-1,q)$-forms on the complex ball $B_\varepsilon$. Our second key observation is  that the weakly pseudoconvexity condition provides us a local potential, which is plurisubharmonic on $B_\varepsilon$.  All these observations inspire  us to  construct trivial holomorphic line bundle over $B_\varepsilon$, and let the local potential serve as a Hermitian metric of this line bundle, and then we can translate  the CR eigenvalue estimate problem to a counterpart in the category of complex holomorphy. To do this,   we modify Berndtsson's trick, do careful analysis.  Finally, Theorem \ref{main theorem} turns up in this paper.

%
The structure of this paper is organized  as follows. In Section
\ref{preliminaries}, we introduce the basics of CR manifolds with transversal CR $S^1$-action. In Section \ref{sect: Hermitian CR}, we introduce  Hermitian CR geometry under transversal CR $S^1$-action. In Section  \ref{sect: local picture}, we formulate the local picture of compact connected weakly pseudoconvex CR manifold with transversal CR $S^1$-action by using Baouendi-Rothschild-Treves' theory. In Section \ref{sect: local rep}, we give a local representation of $\overline{\partial}_b$, $\overline{\partial}^*_b$ and $\Box^{(p,q)}_{b,m}$.  In Section \ref{scaling}, we prepare the scaling technique for the proof of Theorem \ref{main theorem}. In Section \ref{proof}, we give the proof of the
Theorem \ref{main theorem}, and explain why the estimate can not be
improved in general.  In Section \ref{Serre}, we give a proof of a
Serre type duality theorem, namely Theorem \ref{Serre duality I}. In Section \ref{application}, we give some
applications of our main results. Namely, we prove Theorem \ref{weak
m negative-1}, Theorem \ref{morse m positive-1}, Theorem \ref{GR
criterion-1}, Theorem \ref{main theorem orbifold-1}, Theorem
\ref{identification-1}, Theorem \ref{orbifold cohomology-1}.


\section{CR manifold with transversal CR $S^1$-action}\label{preliminaries}
Let $(X, T^{1,0}X)$ be a compact connected CR manifold of dimension $2n-1$, $n\geq 2$, where $T^{1,0}X$
is the given CR structure on $X$. That is, $T^{1,0}X$ is a sub-bundle of the complexified tangent bundle
$\mathbb{C}TX$ of rank $n-1$, satisfying $T^{1,0}X\cap T^{0,1}X=\{0\}$, where $T^{0,1}X=\overline{T^{1,0}X}$,
and $[\mathcal{V},\mathcal{V}]\subset \mathcal{V}$, where $\mathcal{V}=\mathcal{C}^\infty(X,T^{1,0}X)$.

We assume throughout this paper that, $(X,T^{1,0}X)$ is a compact connected CR manifold with a transversal
CR  $S^1$-action.

Denote by $e^{i\theta}~(0\leq \theta<2\pi)$ the $S^1$-action: $S^1\times X\rightarrow X$, $(e^{i\theta},x)\mapsto e^{i\theta}\circ x$. Set $X_{reg}=\{x\in X:\forall  e^{i\theta}\in S^1$, if $e^{i\theta}\circ x=x$, then $e^{i\theta}=$ id$\}$.
We call $x\in X_{reg}$ a regular point of the $S^1$-action.  It is proved in \cite{HL16} that
$X_{reg}$ is an open, dense subset of $X$, and thus the measure of $X\setminus X_{reg}$ is zero.

Let $T\in \mathcal{C}^\infty(X,TX)$  be the global real vector field induced by the $S^1$-action
$e^{i\theta}$ ($\theta\in [0,2\pi)$) given as follows
\begin{align*}
(Tu)(x)=\frac{\partial}{\partial\theta}(u(e^{i\theta}\circ x))\big|_{\theta=0}, u\in \mathcal{C}^\infty(X).
\end{align*}

\begin{definition}
We say that the $S^1$-action is CR if
\begin{align*}
[T,\mathcal{C}^\infty(X,T^{1,0}X)]\subset \mathcal{C}^\infty(X,T^{1,0}X),
\end{align*}
where $[\cdot,\cdot]$ is the Lie bracket between the smooth vector fields on $X$. Furthermore, we say
that the $S^1$-action is transversal if for each $x\in X$,
\begin{align*}
\mathbb{C}T(x)\oplus T^{1,0}_xX\oplus T^{0,1}_xX=\mathbb{C}T_xX.
\end{align*}
\end{definition}

Denote by $\omega_0$ the global real $1$-form determined by $\langle \omega_0,u\rangle=0$,
for every $u\in T^{1,0}X\oplus T^{0,1}X$ and $\langle \omega_0,T\rangle=-1$.

\begin{definition}\label{levi form}
For $x\in X$, the Levi form $\mathcal{L}_x$ associated with the CR structure is the Hermitian quadratic form on
$T^{1,0}_xX$  defined as follows. For any $U,V\in T^{1,0}_xX$, pick
$\mathcal{U},\mathcal{V}\in \mathcal{C}^\infty(X, T^{1,0}X)$ such that $\mathcal{U}(x)=U$, $\mathcal{V}(x)=V$. Set
\begin{align*}
\mathcal{L}_x(U,\overline{V})=\frac{1}{2i}\langle[\mathcal{U},\overline{\mathcal{V}}](x),\omega_0(x)\rangle
\end{align*}
where $[\cdot,\cdot]$ denotes the Lie bracket between smooth vector fields. Note that $\mathcal{L}_x$ does not depend on
the choice of $\mathcal{U}$ and $\mathcal{V}$.
\end{definition}
\begin{definition}
The CR structure on $X$ is called (weakly) pseudo-convex at $x\in X$ if $\mathcal{L}_x$ is positive semi-definite.
It is called strongly pseudoconvex at $x$ if $\mathcal{L}_x$ is positive definite. If the CR structure is
(strongly)  pseudoconvex at every point of $X$, then $X$ is called a (strongly) pseudoconvex CR manifold.
\end{definition}

Denote by $T^{*1,0}X$ and $T^{*0,1}X$ the dual bundle of $T^{1,0}X$ and $T^{0,1}X$ respectively. Define the
vector bundle of $(p,q)$-forms by $T^{*p,q}X:=\Lambda^pT^{*1,0}X\otimes\Lambda^qT^{*0,1}X$. Let $D\subset X$ be
an open subset. Let $\Omega^{p,q}(D)$ denote the space of smooth sections of $T^{*p,q}X$ over $D$ and let
$\Omega^{p,q}_0(D)$ be the subspace of $\Omega^{p,q}(D)$ whose elements have compact support in $D$.

Fix $\theta_0\in [0,2\pi)$. Let
\begin{align*}
de^{i\theta_0}:\mathbb{C}T_xX\rightarrow \mathbb{C}T_{e^{i\theta_0}x}X
\end{align*}
denote the differential map of $e^{i\theta_0}:X\rightarrow X$. By the property of transversal CR $S^1$-action,
one can check that
\begin{align}
&de^{i\theta_0}:T^{1,0}_xX\rightarrow T^{1,0}_{e^{i\theta_0}x}X,\notag\\
&de^{i\theta_0}:T^{0,1}_xX\rightarrow T^{0,1}_{e^{i\theta_0}x}X,\label{tangent map}\\
&de^{i\theta_0}(T(x))=T(e^{i\theta_0}\circ x).\notag
\end{align}
Let $(de^{i\theta_0})^*:\Lambda^{p+q}(\mathbb{C}T^*X)\rightarrow \Lambda^{p+q}(\mathbb{C}T^*X)$ be the pull-back
of $de^{i\theta_0}$, $p, q=0,1,\cdots,n-1$. From (7), we can check that for every $p, q=0,1,\cdots, n-1$,
\begin{align}\label{induced tangent map}
(de^{i\theta_0})^*:T^{*p,q}_{e^{i\theta_0}\circ x}X\rightarrow T^{*p,q}_xX.
\end{align}
Let $u\in \Omega^{p,q}(X)$, define $Tu$ as follows. For any $X_1,\cdots, X_p\in T^{1,0}_xX$ and
$Y_1,\cdots, Y_q\in T_x^{0,1}X$,
\begin{align*}
&Tu(X_1,\cdots,X_p;Y_1,\cdots, Y_q)\\
&:=\frac{\partial}{\partial\theta}((de^{i\theta})^*u(X_1,\cdots,X_p;Y_1,\cdots,Y_q))
|_{\theta=0}.
\end{align*}
From (\ref{tangent map}) and (\ref{induced tangent map}), we have that $Tu\in \Omega^{p,q}(X)$ for all
$u\in \Omega^{p,q}(X)$.

Let $\overline{\partial}_b:\Omega^{p,q}(X)\rightarrow \Omega^{p,q+1}(X)$ be the tangential
Cauchy-Riemann operator. For the definition of tangential Cauchy-Riemann operator, we refer to
\cite{Bo91,CS01, Ja90}. It is straightforward  from (\ref{tangent map}) and (\ref{induced tangent map}) to see that
\begin{align*}
T\overline{\partial}_b=\overline{\partial}_b T
\end{align*}
on $\Omega^{p,q}(X)$. For every $m\in \mathbb{Z}$, put $\Omega^{p,q}_m(X):=\{u\in \Omega^{p,q}(X):Tu=imu\}$.
We have the $\overline{\partial}_b$-complex for every $m\in \mathbb{Z}$:
\begin{align*}
\cdots\rightarrow \Omega^{p,q-1}_m(X)\rightarrow \Omega^{p,q}_m(X)\rightarrow\Omega^{p,q+1}_m(X)\rightarrow \cdots
\end{align*}
For every $m\in \mathbb{Z}$, the  $(p,q)$-th $\overline{\partial}_b$ cohomology (or Kohn-Rossi cohomology) is given by
\begin{align*}
H^{p,q}_{b,m}(X):=\frac{\mbox{Ker}\overline{\partial}_b:\Omega^{p,q}_m(X)\rightarrow \Omega^{p,q+1}_m(X) }
{\mbox{Im}\overline{\partial}_b:\Omega^{p,q-1}_m(X)\rightarrow \Omega^{p,q}_m(X)}.
\end{align*}


For $m\in \mathbb{Z}$, when $q=0$, $H^{0,0}_{b,m}(X)$ is the space of CR functions which lie in the eigenspace
of $T$ and we call $H^{0,0}_{b,m}(X)$ the $m$-th Fourier component of CR functions. We say that a function
$u\in \mathcal{C}^\infty(X)$ is a Cauchy-Riemann (CR for short) function if $\overline{\partial}_bu=0$ or
in the other word, $\overline{Z}u=0$ for all $Z\in \mathcal{C}^\infty(X,T^{1,0}X)$.


\section{Metric input: Hermtian CR geometry under transversal CR $S^1$-action}\label{sect: Hermitian CR}
In this section, we  collect  facts we need  on Hermtian CR geometry under transversal CR $S^1$-action.  Lemmas and Theorems not specified are taken  from \cite{HL15,HL16}.
\begin{definition}
Let $D$ be an open set and let $V\in \mathcal{C}^\infty(D,\mathbb{C}TX)$ be a vector field on $D$. We say
that $V$ is $T$-rigid if
\begin{align*}
de^{i\theta}(V(x))=V(e^{i\theta}\circ x)
\end{align*}
for  any $x,\theta\in [0,2\pi)$ satisfying $x\in D$, $e^{i\theta}\circ x\in D$.
\end{definition}

\begin{definition}\label{defn: T rigid}
Let $\langle\cdot|\cdot\rangle$ be a Hermitian metric on $\mathbb{C}TX$. We say that $\langle\cdot|\cdot\rangle$
is $T$-rigid if for $T$-rigid vector fields $V,W$ on $D$, where $D$ is any open set, we have
\begin{align*}
\langle V(x)|W(x)\rangle=\langle (de^{i\theta}V)(e^{i\theta}\circ x)|(de^{i\theta}W)(e^{i\theta}\circ x)\rangle,
\end{align*}
for any $x\in D$, $ \theta\in [0,2\pi)$ such that $e^{i\theta}\circ x\in D$.
\end{definition}
\begin{lemma}\label{Hermitian metric}Let $X$ be a compact connected CR manifold with a
transversal $S^1$-action. There is always a $T$-rigid Hermitian metric $\langle\cdot|\cdot\rangle$ on
$\mathbb{C}TX$ such that $T^{1,0}X\perp T^{0,1}X$, $T\perp (T^{1,0}X\oplus T^{0,1}X)$, $\langle T|T\rangle=1$
and $\langle u|v\rangle$ is real if $u,v$ are real tangent vectors.
\end{lemma}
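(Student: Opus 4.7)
The plan is to translate the lemma into Riemannian terms and then average over the $S^1$-action. First, I would observe that a Hermitian metric on $\mathbb{C}TX$ whose values on real vectors are real is the same thing as a Riemannian metric $g$ on the real tangent bundle $TX$, via the assignment $\langle u|v\rangle := g_{\mathbb{C}}(u,\bar v)$, where $g_{\mathbb{C}}$ is the $\mathbb{C}$-bilinear extension of $g$. Under this dictionary, the requested properties become conditions on $g$: $T^{1,0}X \perp T^{0,1}X$ is equivalent to $g$ being invariant under the almost complex structure $J$ on the horizontal distribution $HX := \mathrm{Re}(T^{1,0}X \oplus T^{0,1}X)$ whose $+i$-eigenbundle in $\mathbb{C}TX$ is $T^{1,0}X$; the orthogonality $T \perp (T^{1,0}X \oplus T^{0,1}X)$ becomes $g(T, HX) = 0$; and $\langle T|T\rangle = 1$ becomes $g(T,T) = 1$. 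Finally, $T$-rigidity is exactly the $S^1$-invariance of $g$, because the defining identity reduces to $g_x(u,v) = g_{e^{i\theta}\circ x}(de^{i\theta}u, de^{i\theta}v)$.

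Next, I would build such a $g$ by hand. A partition-of-unity argument produces some Hermitian metric $h$ on the complex vector bundle $T^{1,0}X$, and this determines a $J$-invariant Riemannian metric $g_H$ on $HX$ via the standard recipe of taking the real part of the induced Hermitian form on $T^{1,0}X \oplus T^{0,1}X$ restricted to real vectors. Using the transversality splitting $TX = HX \oplus \mathbb{R}T$, I then set $g := g_H \oplus d\theta^2$, where $d\theta^2$ is the rank-one metric on $\mathbb{R}T$ making $T$ a unit vector. By construction, every condition in the lemma is satisfied except possibly $T$-rigidity.

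To enforce the remaining condition, I would average:
\[
\tilde g := \frac{1}{2\pi}\int_0^{2\pi} (e^{i\theta})^* g\, d\theta.
\]
Then $\tilde g$ is manifestly $S^1$-invariant, which gives $T$-rigidity after Hermitian extension. The crucial point is that averaging does not destroy the other three conditions, because the $S^1$-action preserves all of the auxiliary structures: $de^{i\theta}(T) = T$ since $T$ generates the action, and integrating the infinitesimal CR hypothesis $[T, \mathcal{C}^\infty(X, T^{1,0}X)] \subset \mathcal{C}^\infty(X, T^{1,0}X)$ yields $de^{i\theta}(T^{1,0}X) = T^{1,0}X$, so the action preserves $HX$ and $J$ as well. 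Consequently each pulled-back metric $(e^{i\theta})^* g$ individually satisfies $J$-invariance on $HX$, $g(T,HX) = 0$, and $g(T,T) = 1$; since these are linear conditions on the metric, they pass to the average. Extending $\tilde g$ to the Hermitian metric $\langle u|v\rangle := \tilde g_{\mathbb{C}}(u, \bar v)$ on $\mathbb{C}TX$ then yields all the desired properties.

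The main potential obstacle is the passage from the infinitesimal CR condition to its finite-time analogue $de^{i\theta}(T^{1,0}X) = T^{1,0}X$, but this is a standard consequence of the fact that the flow of $T$ is precisely the $S^1$-action and that any smooth distribution whose sections are closed under $\mathcal{L}_T$ is preserved by this flow. The remaining verifications are direct linear-algebra computations (most notably that $J$-invariance of $g_H$ on $HX$ is equivalent, after $\mathbb{C}$-bilinear extension, to the vanishing of $g_{\mathbb{C}}$ on $T^{1,0}X \times T^{1,0}X$, i.e.\ to $T^{1,0}X \perp T^{0,1}X$ in the Hermitian sense).
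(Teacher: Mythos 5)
Your proposal is correct, and it reconstructs the standard argument: the paper itself gives no proof of this lemma (it is quoted from Hsiao--Li \cite{HL16}), where the metric is likewise obtained by building a metric adapted to the splitting $\mathbb{C}TX=T^{1,0}X\oplus T^{0,1}X\oplus\mathbb{C}T$ and then averaging over the $S^1$-action, using that $de^{i\theta}$ preserves $T^{1,0}X$ and $T$. Your reduction to a Riemannian metric and the verification that the orthogonality, normalization, and $J$-compatibility conditions survive averaging are all sound.
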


From now on, we fix a $T$-rigid Hermitian metric $\langle\cdot|\cdot\rangle$ on $\mathbb{C}TX$ satisfying all
the properties in Lemma \ref{Hermitian metric}. The Hermitian metric $\langle\cdot|\cdot\rangle$ on
$\mathbb{C}TX$ induces by duality a  Hermitian metric on $\mathbb{C}T^*X$ and also on the bundles of
$(p,q)$-forms for $p, q=0,1,\cdots,n-1$. We shall also denote all these induced metrics by
$\langle\cdot|\cdot\rangle$. For every $v\in T^{*p,q}X$, we write $|v|^2:=\langle v|v\rangle$. We have the
pointwise orthogonal decompositions:
\begin{align*}
&\mathbb{C}T^*X=T^{*1,0}X\oplus T^{*0,1}X\oplus \{\lambda\omega_0:\lambda\in \mathbb{C}\},\\
&\mathbb{C}TX=T^{1,0}X\oplus T^{0,1}X\oplus \{\lambda T:\lambda\in \mathbb{C}\}.
\end{align*}

For any $p\in X$, locally there is an orthonormal frame $\{U_1,\cdots, U_{n-1}\}$ of $T^{1,0}X$ with respect
to the given $T$-rigid Hermitian metric $\langle\cdot|\cdot\rangle$  such that the Levi-form $\mathcal{L}_p$
is diagonal in this frame,  $\mathcal{L}_p(U_i,\overline{U}_j)=\lambda_j\delta_{ij}$, where $\delta_{ij}=1$
if $i=j$, $\delta_{ij}=0$ if $i\neq j$. The entries $\{\lambda_1,\cdots,\lambda_{n-1}\}$ are called the
eigenvalues of the Levi-form at $p$ with respect to the $T$-rigid Hermitian metric $\langle\cdot|\cdot\rangle$.
Moreover, the determinant of $\mathcal{L}_p$ is defined by $\det\mathcal{L}_p=\lambda_1(p)\cdots\lambda_{n-1}(p)$.

Let $(\cdot|\cdot)$ be the $L^2$ inner product on $\Omega^{p,q}(X)$ induced by $\langle\cdot|\cdot\rangle$  and
let $\|\cdot\|$ denote the corresponding norm. Then for all  $u,v\in \Omega^{p,q}(X)$
\begin{align*}
(u|v)=\int_X\langle u|v\rangle dv_X
\end{align*}
where $dv_X$ is the volume form on $X$ induced by the $T$-rigid Hermitian metric. Let $L^2_{(p,q),m}(X)$ be
the completion of $\Omega^{p,q}_m(X)$ with respect to $(\cdot|\cdot)$. For $m\in \mathbb{Z}$, let
\begin{align*}
Q^{(p,q)}_m:L^2_{(p,q)}(X)\rightarrow L^2_{(p,q),m}(X)
\end{align*}
be the orthogonal projection with respect to $(\cdot|\cdot)$. Then for any $u\in \Omega^{p,q}(X)$,
\begin{align*}
Q^{(p,q)}_mu=\frac{1}{2\pi}\int_{-\pi}^\pi u(e^{i\theta}\circ x)e^{-im\theta}d\theta.
\end{align*}
By using the elementary Fourier analysis, it is straightforward to see that for any $u\in \Omega^{p,q}(X)$,
\begin{align*}
\sum^N_{m=-N}Q^{(p,q)}_mu\rightarrow u
\end{align*}
in $\mathcal{C}^\infty$ topology as $N\rightarrow \infty$. For every $u\in L^2_{(p,q)}(X)$,
\begin{align*}
\sum^N_{m=-N}Q^{(p,q)}_mu\rightarrow u
\end{align*}
in $L^2_{(p,q)}(X)$ as $N\rightarrow \infty$.
If we denote the $\lim_{N\rightarrow \infty}\sum^N_{m=-N}Q^{(p,q)}_mu$ by $\sum_{m\in \mathbb{Z}}Q^{(p,q)}_mu$,
then we write $u=\sum_{m\in \mathbb{Z}}Q^{(p,q)}_mu$. Thus we have the following Fourier decomposition
\begin{align*}
\Omega^{p,q}(X)=\oplus_{m\in \mathbb{Z}}\Omega^{p,q}_m(X), L^2_{(p,q)}(X)=\oplus_{m\in \mathbb{Z}}L^2_{(p,q),m}(X).
\end{align*}
We have the following Fourier decomposition of the $(p,q)$-th Kohn-Rossi cohomology
\begin{align*}
H^{p,q}_b(X)\simeq \oplus_{m\in \mathbb{Z}}H^{p,q}_{b,m}(X).
\end{align*}

Let $\overline{\partial}^*_b:\Omega^{p,q+1}(X)\rightarrow \Omega^{p,q}(X)$ be the formal adjoint of
$\overline{\partial}_b$ with respect to $(\cdot|\cdot)$. Since the Hermitian metrics $\langle\cdot|\cdot\rangle$
are $T$-rigid, we can check that
\begin{align*}
T\overline{\partial}^*_b=\overline{\partial}^*_bT
\end{align*}
on $\Omega^{p,q}(X)$ for $p, q=0,1,\cdots,n-1$ and thus
\begin{align*}
\overline{\partial}^*_b:\Omega^{p,q+1}_m(X)\rightarrow \Omega^{p,q}_m(X), \forall m\in \mathbb{Z}.
\end{align*}
Put
\begin{align*}
\Box^{(p,q)}_b:=\overline{\partial}_b\overline{\partial}^*_b+\overline{\partial}^*_b\overline{\partial}_b:
\Omega^{p,q}(X)\rightarrow
\Omega^{p,q}(X).
\end{align*}
We also have that
\begin{align*}
T\Box^{(p,q)}_b=\Box^{(p,q)}_bT,
\end{align*}
thus
\begin{align*}
\Box^{(p,q)}_b:\Omega^{p,q}_m(X)\rightarrow \Omega^{p,q}_m(X)
\end{align*}
We will write $\Box^{(p,q)}_{b,m}$ to denote the restriction of $\Box^{(p,q)}_b$ on $\Omega^{p,q}_m(X)$.
For every $m\in \mathbb{Z}$, we extend $\Box^{(p,q)}_{b,m}$ to $L^2_{(p,q),m}(X)$ by
\begin{align*}
\Box^{(p,q)}_{b,m}:Dom(\Box^{(p,q)}_{b,m})\subset L^2_{(p,q),m}(X)\rightarrow L^2_{(p,q),m}(X),
\end{align*}
where $Dom(\Box^{(p,q)}_{b,m})=\{u\in L^2_{(p,q),m}(X):\Box^{(p,q)}_{b,m}u\in L^2_{(p,q),m}(X)$ in the sense
of distribution $\}$.
\begin{thm}\label{regular 1}
For every $s\in \mathbb{N}_0:=\mathbb{N}\cup \{0\}$, there exists a constant $C_s$ such that
\begin{align*}
\|u\|_{s+1}\leq C_s\big(\|\Box^{(p,q)}_bu\|_s+\|Tu\|_s+\|u\|_s\big), \forall u\in \Omega^{p,q}(X),
\end{align*}
where $\|\cdot\|_s$ denotes the standard sobolev norm of order $s$ on $X$.
\end{thm}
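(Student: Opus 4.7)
The plan is to show that $P := \Box_b^{(p,q)} + T^*T$ is a second-order \emph{elliptic} differential operator on $\Omega^{p,q}(X)$, after which the estimate follows from standard elliptic regularity together with the triangle inequality. The guiding idea is that $\Box_b$ alone fails to be elliptic precisely along the characteristic line $\mathbb{R}\omega_0$, and this is exactly the direction that $T^*T$ controls, thanks to the transversality of the $S^1$-action.

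First I would record one preliminary fact: because the Hermitian metric $\langle\cdot|\cdot\rangle$ from Lemma~\ref{Hermitian metric} is $T$-rigid, the $S^1$-action preserves the induced volume form $dv_X$, so $T$ is volume-preserving and integration by parts gives $T^* = -T$. Thus $P = \Box_b^{(p,q)} - T^2$.

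Next I would compute the principal symbol of $P$ at a nonzero real covector $\xi \in T_x^*X$, using the pointwise decomposition $\mathbb{C}T^*X = T^{*1,0}X \oplus T^{*0,1}X \oplus \mathbb{C}\omega_0$ to write $\xi = \xi^{1,0} + \xi^{0,1} + \alpha\omega_0(x)$ with $\xi^{1,0} = \overline{\xi^{0,1}}$ and $\alpha\in\mathbb{R}$. The principal symbol of $\overline{\partial}_b$ is $u\mapsto i\,\xi^{0,1}\wedge u$ and that of $\overline{\partial}_b^*$ is $u\mapsto -i\,\iota_{\xi^{0,1}}u$, so the identity $(\xi^{0,1}\wedge)\iota_{\xi^{0,1}} + \iota_{\xi^{0,1}}(\xi^{0,1}\wedge) = |\xi^{0,1}|^2$ yields
\begin{equation*}
\sigma_2(\Box_b^{(p,q)})(\xi) = |\xi^{0,1}|^2 \cdot \mathrm{id}.
\end{equation*}
Meanwhile $\sigma_1(T)(\xi) = i\xi(T) = -i\alpha$ (since $\langle\omega_0,T\rangle = -1$), so $\sigma_2(-T^2)(\xi) = \alpha^2$. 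Combining,
\begin{equation*}
\sigma_2(P)(\xi) = \bigl(|\xi^{0,1}|^2 + \alpha^2\bigr)\cdot\mathrm{id},
\end{equation*}
which vanishes only when $\xi = 0$. Hence $P$ is an elliptic self-adjoint differential operator of order $2$ on $\Omega^{p,q}(X)$.

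Standard elliptic regularity then gives, for every integer $s\geq 0$,
\begin{equation*}
\|u\|_{s+1} \leq C_s'\bigl(\|Pu\|_{s-1} + \|u\|_{s-1}\bigr), \qquad u\in \Omega^{p,q}(X).
\end{equation*}
Since $T$ has order $1$, we have $\|T^2u\|_{s-1} = \|T(Tu)\|_{s-1} \leq C\|Tu\|_s$, so
\begin{equation*}
\|Pu\|_{s-1} \leq \|\Box_b u\|_{s-1} + \|T^2u\|_{s-1} \leq \|\Box_b u\|_s + C\|Tu\|_s,
\end{equation*}
and trivially $\|u\|_{s-1}\leq \|u\|_s$. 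Putting these together yields the stated inequality with a suitable constant $C_s$.

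The main obstacle is the symbolic calculation showing ellipticity of $P$: one must verify that $\Box_b$ is ``horizontally elliptic'' (its principal symbol vanishes \emph{exactly} on multiples of $\omega_0$) and that $-T^2$ supplies the missing direction. This is precisely where the transversal CR $S^1$-action enters, via the splitting $\mathbb{C}TX = T^{1,0}X \oplus T^{0,1}X \oplus \mathbb{C}T$ and the $T$-rigidity of the chosen metric; the bootstrapping from $s=0$ to general $s$ is then routine once ellipticity is in hand.
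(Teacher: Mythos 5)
Your argument is correct and is essentially the standard proof of this estimate: the paper itself does not reprove the statement but quotes it from \cite{HL16}, where the same observation is used, namely that $\Box^{(p,q)}_b-T^2$ is elliptic because the principal symbol of $\Box^{(p,q)}_b$ is $|\xi^{0,1}|^2\cdot\mathrm{id}$, vanishing exactly on the characteristic line $\mathbb{R}\omega_0$ that the symbol $\alpha^2$ of $-T^2$ controls. The remaining steps (elliptic a priori estimates with a two-derivative gain, plus $\|T^2u\|_{s-1}\leq C\|Tu\|_s$) are routine and carried out correctly.
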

\begin{thm}\label{regular 2}
Fix $m\in \mathbb{Z}$, for every $s\in \mathbb{N}_0$, there is a constant $C_{s,m}$ such that
\begin{align*}
\|u\|_{s+1}\leq C_{s,m}\big(\|\Box^{(p,q)}_{b,m}u\|_s+\|u\|_s\big), \forall u\in \Omega^{p,q}_m(X).
\end{align*}
\end{thm}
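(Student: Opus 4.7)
The plan is to deduce Theorem \ref{regular 2} directly from Theorem \ref{regular 1} by exploiting the defining property of the Fourier component $\Omega^{p,q}_m(X)=\{u\in \Omega^{p,q}(X):Tu=imu\}$. On this subspace the operator $T$ reduces to multiplication by the scalar $im$, while the commutation $T\Box^{(p,q)}_b=\Box^{(p,q)}_b T$ (recorded earlier in Section 3) ensures that $\Box^{(p,q)}_b$ preserves $\Omega^{p,q}_m(X)$ and coincides there with $\Box^{(p,q)}_{b,m}$ by definition.

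Given any $u\in \Omega^{p,q}_m(X)$, I would therefore record the two pointwise identities $Tu=imu$ and $\Box^{(p,q)}_b u=\Box^{(p,q)}_{b,m}u$, which immediately yield $\|Tu\|_s=|m|\,\|u\|_s$ and $\|\Box^{(p,q)}_b u\|_s=\|\Box^{(p,q)}_{b,m}u\|_s$ for every Sobolev order $s\in\mathbb{N}_0$ (no derivatives are lost because one is multiplying by a constant). Substituting these two identities into the estimate of Theorem \ref{regular 1} gives
\begin{align*}
\|u\|_{s+1}\leq C_s\bigl(\|\Box^{(p,q)}_{b,m}u\|_s+(|m|+1)\|u\|_s\bigr),
\end{align*}
and setting $C_{s,m}:=C_s(|m|+1)$ yields exactly the stated inequality.

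There is essentially no additional obstacle beyond this bookkeeping: all the genuine sub-elliptic analysis has already been absorbed into Theorem \ref{regular 1}, which is the non-trivial input via the rigidity of the metric and the transversality of the $S^1$-action. The only subtlety worth emphasizing is that the constant must be allowed to depend on $m$; this is why the statement reads $C_{s,m}$ rather than $C_s$, and the $m$-dependence enters precisely through the factor $(|m|+1)$ produced by $\|Tu\|_s$. In particular, the bound is manifestly not uniform in $m$, which is consistent with the sub-elliptic (rather than elliptic) character of $\Box_b$ on $X$ itself, whereas its restrictions $\Box^{(p,q)}_{b,m}$ become honestly elliptic along $\Omega^{p,q}_m(X)$ at the price of a growing constant.
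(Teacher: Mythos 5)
Your deduction is correct: on $\Omega^{p,q}_m(X)$ one has $Tu=imu$, hence $\|Tu\|_s=|m|\,\|u\|_s$, and substituting this together with $\Box^{(p,q)}_b u=\Box^{(p,q)}_{b,m}u$ into Theorem \ref{regular 1} yields the claimed estimate with $C_{s,m}=C_s(|m|+1)$. The paper itself simply imports this statement from \cite{HL16} without proof, and your argument is precisely the standard (and intended) way to obtain Theorem \ref{regular 2} from Theorem \ref{regular 1}.
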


\begin{thm}\label{hodge}
Fix $m\in \mathbb{Z}$, $\Box^{(p,q)}_{b,m}:Dom(\Box^{(p,q)}_{b,m})\subset L^2_{(p,q),m}(X)\rightarrow L^2_{(p,q),m}(X),$
is  a self-adjoint operator.  The spectrum of $\Box^{(p,q)}_{b,m}$ denoted by Spec$(\Box^{(p,q)}_{b,m})$
is a discrete subset of $[0,+\infty)$. For every $\lambda\in $Spec$(\Box^{(p,q)}_{b,m})$ the eigenspace
with respect to $\lambda$
\begin{align*}
\mathcal{H}^{p,q}_{b,m,\lambda}(X)=\Big\{ u\in Dom(\Box^{(p,q)}_{b,m}): \Box^{(p,q)}_{b,m}u=\lambda u \Big\}
\end{align*}
is finite dimensional with $\mathcal{H}^{p,q}_{b,m,\lambda}(X)\subset \Omega^{p,q}_m(X)$ and for $\lambda=0$
we denote by $\mathcal{H}^{p,q}_{b,m}(X)$ the harmonic space $\mathcal{H}^{p,q}_{b,m,0}(X)$ for brevity and then
we have the Dolbeault isomorphism
\begin{align*}
\mathcal{H}^{p, q}_{b,m}(X)\simeq H^{p,q}_{b,m}(X).
\end{align*}
In particular, we have
\begin{align*}
\dim H^{p,q}_{b,m}(X)<\infty, \forall m\in \mathbb{Z}, \forall~ 0\leq p, q\leq n-1.
\end{align*}

\end{thm}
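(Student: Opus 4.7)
The plan is to deduce everything from the subelliptic estimate of Theorem \ref{regular 2}, which effectively says that $\Box^{(p,q)}_{b,m}$ behaves like an elliptic operator after restricting to a single Fourier mode (the missing $T$-direction being supplied by the eigenvalue equation $Tu = imu$). First I would set up the maximal realization of $\Box^{(p,q)}_{b,m}$ on $L^2_{(p,q),m}(X)$ and check symmetry: since $\overline{\partial}_b$ and $\overline{\partial}_b^*$ are formal adjoints with respect to the $T$-rigid inner product, $\Box^{(p,q)}_b$ is formally self-adjoint on $\Omega^{p,q}(X)$, and symmetry passes to the Fourier-restricted operator because $T$ commutes with $\Box^{(p,q)}_b$. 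Applying the $s=0$ estimate of Theorem \ref{regular 2},
\begin{align*}
\|u\|_1 \le C_{0,m}\bigl(\|\Box^{(p,q)}_{b,m}u\|_0 + \|u\|_0\bigr),
\end{align*}
one shows by a Friedrichs mollifier argument (commuting cut-offs with $T$ via averaging over $S^1$ so as to stay in the $m$-th Fourier component) that $\Omega^{p,q}_m(X)$ is dense in the graph-norm domain of the maximal extension, so the maximal and minimal extensions coincide and $\Box^{(p,q)}_{b,m}$ is self-adjoint.

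Next I would establish discreteness of the spectrum and finite dimensionality of eigenspaces. From the same estimate, the inclusion of the graph-norm domain into $L^2_{(p,q),m}(X)$ factors through $H^1(X)$, which embeds compactly into $L^2$ by Rellich--Kondrachov. Consequently $(\Box^{(p,q)}_{b,m} + I)^{-1}$ is a compact self-adjoint operator on $L^2_{(p,q),m}(X)$, so by the spectral theorem the spectrum of $\Box^{(p,q)}_{b,m}$ is discrete in $[0,+\infty)$ with each eigenvalue of finite multiplicity and accumulating only at $+\infty$.

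For the regularity statement $\mathcal{H}^{p,q}_{b,m,\lambda}(X)\subset \Omega^{p,q}_m(X)$, I would bootstrap Theorem \ref{regular 2}. If $u\in \mathrm{Dom}(\Box^{(p,q)}_{b,m})$ satisfies $\Box^{(p,q)}_{b,m}u=\lambda u$, then $u\in H^0$ and iteratively
\begin{align*}
\|u\|_{s+1}\le C_{s,m}\bigl((\lambda+1)\|u\|_s\bigr),
\end{align*}
so $u\in H^s$ for every $s\in \mathbb{N}_0$; Sobolev embedding gives $u\in \mathcal{C}^\infty$, and the identity $Tu=imu$ persists by continuity, so $u\in \Omega^{p,q}_m(X)$.

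Finally I would derive the Dolbeault isomorphism by the standard Hodge-theoretic argument, now available because $\Box^{(p,q)}_{b,m}$ is self-adjoint with closed range (its image is closed since it has discrete spectrum, $0$ being isolated or absent). This yields the orthogonal decomposition
\begin{align*}
\Omega^{p,q}_m(X)=\mathcal{H}^{p,q}_{b,m}(X)\oplus \overline{\partial}_b\Omega^{p,q-1}_m(X)\oplus \overline{\partial}_b^*\Omega^{p,q+1}_m(X),
\end{align*}
and the usual identification $\ker\overline{\partial}_b|_{\Omega^{p,q}_m}=\mathcal{H}^{p,q}_{b,m}(X)\oplus \overline{\partial}_b\Omega^{p,q-1}_m(X)$ produces the isomorphism $\mathcal{H}^{p,q}_{b,m}(X)\simeq H^{p,q}_{b,m}(X)$, and the finite-dimensionality of $H^{p,q}_{b,m}(X)$ follows. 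The main obstacle is the first step: verifying that the maximal extension of $\Box^{(p,q)}_{b,m}$ really is self-adjoint (equivalently, that smooth $m$-th Fourier forms are dense in the graph-norm domain). Because $\Box_b$ is only subelliptic on $X$, one must carry out the Friedrichs-type regularization in a manner that preserves the $S^1$-action; averaging the standard mollifiers over $S^1$ (which commutes with $T$ and with the $T$-rigid metric) is the key device that makes the argument go through and yields the self-adjointness exploited throughout.
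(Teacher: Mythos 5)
Your proposal is essentially correct, but note that the paper does not prove Theorem \ref{hodge} at all: the section it sits in opens with the statement that all unattributed results are taken from \cite{HL16}, so the "paper's own proof" is a citation. Your argument reconstructs the standard proof along the same lines as Hsiao--Li: the estimate of Theorem \ref{regular 2} plays the role of an elliptic estimate on the $m$-th Fourier component, the graph-norm domain embeds into $H^1$ and hence compactly into $L^2$ by Rellich, the resolvent is compact self-adjoint, and the rest is spectral theory, bootstrap regularity, and the usual Hodge decomposition. All of these steps are sound. One simplification worth knowing, which is how the essential self-adjointness is usually organized in this setting: the operator $\Box^{(p,q)}_b - T^2$ \emph{is} genuinely elliptic on $X$ (the symbol of $\Box_b$ degenerates only in the $T$-codirection, and $-T^2$ restores it), it commutes with $T$ and hence preserves each $L^2_{(p,q),m}(X)$, and on that subspace it equals $\Box^{(p,q)}_{b,m}+m^2$. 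Essential self-adjointness, discreteness of the spectrum, finite multiplicities and smoothness of eigenforms are then inherited directly from the classical theory of elliptic formally self-adjoint operators on the compact manifold $X$, restricted to an invariant subspace, without having to hand-craft an $S^1$-averaged Friedrichs mollifier that respects the Fourier decomposition --- though your averaged-mollifier device does work and is the honest way to justify extending the a priori estimate of Theorem \ref{regular 2} from $\Omega^{p,q}_m(X)$ to the maximal domain if you insist on working with $\Box^{(p,q)}_{b,m}$ alone. The remaining steps (closed range because $0$ is isolated in or absent from the spectrum, the three-space decomposition of $\Omega^{p,q}_m(X)$, and the identification of $\mathcal{H}^{p,q}_{b,m}(X)$ with $H^{p,q}_{b,m}(X)$) are exactly the standard Hodge-theoretic argument and are correctly deployed.
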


For $\lambda\geq 0$, we collect the eigenspace of $\Box^{(p,q)}_{b,m}$ whose eigenvalue is less than or equal to $\lambda$ and define
\begin{align*}
\mathcal{H}^{p,q}_{b,m,\leq \lambda}&:=\oplus_{\sigma\leq \lambda}\mathcal{H}^{p,q}_{b,m,\sigma}(X),\\
\mathscr{Z}^{p,q}_{b,m,\leq \lambda}&:=\mbox{Ker}\overline{\partial}_b\cap\mathcal{H}^{p,q}_{b,m,\leq\lambda}.
\end{align*}

The Szeg\"{o} kernel function of the space $\mathscr{Z}^{p,q}_{b,m,\leq \lambda}$ is defined as
\begin{align*}
\Pi^{p,q}_{m,\leq \lambda}(x):=\sum_{j=1}^{d_m}|g_j(x)|^2,
\end{align*}
where $\{g_j\}_{j=1}^{d_m}$ is any orthonormal basis for the space $\mathscr{Z}^{p,q}_{b,m,\leq \lambda}$.

It is easy to see that
\begin{align}\label{dimension}
\dim \mathscr{Z}^{p,q}_{b,m,\leq \lambda}=\int_X\Pi^{p,q}_{m,\leq \lambda}dv_X.
\end{align}

The extremal function $S^{p,q}_{m,\leq \lambda}$ for $y\in X$ is defined by
\begin{align*}
S^{p,q}_{m,\leq \lambda}(y):=\sup_{u\in \mathscr{Z}^{p,q}_{b,m,\leq \lambda},\|u\|=1}|u(y)|^2.
\end{align*}

The next lemma is classical in Bergman's theory of reproducing kernels.
\begin{lemma}[c.f. \cite{Ber02}]\label{Szego kernel}
	
	For any $y\in X$,
	\begin{align*}
	S^{p,q}_{m,\leq \sigma}(y)\leq \Pi^{p,q}_{m,\leq \sigma}(y)\leq \binom{n-1}{p}\binom{n-1}{q} S^{p,q}_{m,\leq \sigma}(y).
	\end{align*}

	In particular,
	\begin{align*}
	\int_X S^{p,q}_{m,\leq \sigma}(y)dv_X\leq \dim \mathscr{Z}^{p,q}_{b,m,\leq \lambda}\leq \binom{n-1}{p}\binom{n-1}{q}\int_X S^{p,q}_{m,\leq \sigma}(y)dv_X.
	\end{align*}
\end{lemma}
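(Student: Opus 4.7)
The plan is to adapt the classical Bergman reproducing-kernel argument to the present form-valued setting. Fix $y\in X$ and set $N:=\binom{n-1}{p}\binom{n-1}{q}=\dim T^{*p,q}_y X$. Choose an orthonormal basis $\{e_\alpha\}_{\alpha=1}^{N}$ of $T^{*p,q}_y X$ with respect to the induced Hermitian inner product, and keep the orthonormal basis $\{g_j\}_{j=1}^{d_m}$ of $\mathscr{Z}^{p,q}_{b,m,\leq\sigma}$ used to define $\Pi^{p,q}_{m,\leq\sigma}$ (the kernel is independent of the basis by the usual change-of-frame argument).

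For the upper bound $S^{p,q}_{m,\leq\sigma}(y)\leq \Pi^{p,q}_{m,\leq\sigma}(y)$, I take an arbitrary $u\in\mathscr{Z}^{p,q}_{b,m,\leq\sigma}$ with $\|u\|=1$, expand $u=\sum_j c_j g_j$ so that $\sum_j|c_j|^2=1$, and write $g_j(y)=\sum_\alpha a_{j\alpha}e_\alpha$. Cauchy--Schwarz applied componentwise in $\alpha$ yields
\[
|u(y)|^2=\sum_\alpha\Big|\sum_j c_j a_{j\alpha}\Big|^2\leq \sum_\alpha\Big(\sum_j|c_j|^2\Big)\Big(\sum_j|a_{j\alpha}|^2\Big)=\Pi^{p,q}_{m,\leq\sigma}(y),
\]
and passing to the supremum over $u$ gives the first inequality.

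For the reverse inequality $\Pi^{p,q}_{m,\leq\sigma}(y)\leq N\cdot S^{p,q}_{m,\leq\sigma}(y)$, I invoke Riesz representation on the finite-dimensional Hilbert space $\mathscr{Z}^{p,q}_{b,m,\leq\sigma}$. For each $\alpha$ the linear functional $\ell_\alpha(u):=\langle u(y),e_\alpha\rangle$ satisfies $|\ell_\alpha(u)|\leq |u(y)|\leq \sqrt{S^{p,q}_{m,\leq\sigma}(y)}\,\|u\|$, so its Riesz representative $v_\alpha\in \mathscr{Z}^{p,q}_{b,m,\leq\sigma}$ obeys $\|v_\alpha\|^2\leq S^{p,q}_{m,\leq\sigma}(y)$ together with $(u,v_\alpha)=\langle u(y),e_\alpha\rangle$ for every $u$. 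Parseval's identity in $\mathscr{Z}^{p,q}_{b,m,\leq\sigma}$ then gives $\|v_\alpha\|^2=\sum_j|(g_j,v_\alpha)|^2=\sum_j|\langle g_j(y),e_\alpha\rangle|^2$, and summing over $\alpha$ produces
\[
\Pi^{p,q}_{m,\leq\sigma}(y)=\sum_j |g_j(y)|^2=\sum_\alpha \|v_\alpha\|^2\leq N\cdot S^{p,q}_{m,\leq\sigma}(y).
\]

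The integrated statement follows at once by integrating the pointwise bounds against $dv_X$ and invoking the identity $\dim\mathscr{Z}^{p,q}_{b,m,\leq\lambda}=\int_X\Pi^{p,q}_{m,\leq\lambda}\,dv_X$ recorded in \eqref{dimension}. There is no serious obstacle here; the only point demanding care is the bookkeeping caused by $u(y)$ being a vector in $T^{*p,q}_y X$ rather than a scalar, which is precisely what generates the combinatorial factor $\binom{n-1}{p}\binom{n-1}{q}$ on the right-hand side (versus the scalar Bergman case where the constant is $1$).
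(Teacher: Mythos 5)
Your proof is correct and is essentially the standard reproducing-kernel argument that the paper itself delegates to Berndtsson's Lemma 4.1 (the paper gives no proof of its own, only the citation): the upper bound for $S^{p,q}_{m,\leq\sigma}$ by Cauchy--Schwarz on the coefficient expansion, and the reverse bound by representing the evaluation functionals $u\mapsto\langle u(y),e_\alpha\rangle$ and summing over an orthonormal frame of $T^{*p,q}_yX$, which is exactly where the factor $\binom{n-1}{p}\binom{n-1}{q}$ comes from. The integrated statement then follows from \eqref{dimension} as you say, so there is nothing to add.
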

For the proof of the above Lemma, we refer to \cite[Page 308, Lemma 4.1]{Ber02}.

\begin{rmk}\label{Grauert tube}
A typical example of compact CR manifold with a transversal CR $S^1$-action is the Grauert tube.
Let $M$ be a compact Hermitian manifold of complex dimension $n$, $(L,h)\rightarrow M$ be a holomorphic
line bundle. Denote by $\Theta$  the curvature of $(L,h)$. Let $X$ be the circle bundle
$\{v\in L^*:|v|^2_{h^{-1}}=1\}$ over $M$. $X$ is a real hypersurface in the complex manifold $L^*$ which is
the boundary of the disc bundle $D=\{v\in L^*:|v|^2_{h^{-1}}<1 \}$, with the defining function
$\rho=|v|^2_{h^{-1}}-1$. The Levi form of $\rho$ restricted to the complex tangent plane of $X$ coincides
with the pull-back of $\Theta $ through the canonical projection $\pi:X\rightarrow M$. It is a well-known
fact to the expert (c.f. \cite[Theorem 1.2]{CHT15}) that
\begin{itemize}
\item the space  $\Omega^{p,q}_m(X)$ can be identified with the space $\Omega^{p,q}(M,L^m)$,
\item for each  integer $m$, we get a subcomplex $(\Omega^{p,\bullet}_m(X),\overline{\partial}_b)$ which is
isomorphic to the Dolbeault complex $(\Omega^{p,\bullet}(M,L^m),\overline{\partial})$, thus we get that
the Kohn-Rossi cohomology group $H^{p,q}_{b,m}(X)$ is isomorphic to the Dolbeault cohomology group $H^{p,q}(M,L^m)$.
  \end{itemize}

Grauert tube was first introduced by Grauert \cite{Gr62,Gr94}.  Grauert established the identification of sections
of line bundle $L$ over $M$ and CR functions on $X$. This identification was used by Zelditch \cite{Ze98} to
study the asymptotic expansion of Bergman kernels. Further developments (e.g. the identification of
$\Omega^{0,q}_m(X)$ with $\Omega^{0,q}(M,L^m)$ for $q\geq 0$), we refer to \cite{CHT15, MM06}.

It is worth to point out that, from almost the same proof of Theorem 1.2 in \cite{CHT15}, one can get that
$\mathcal{H}^{p,q}_{b,m,\leq \lambda}(X)\simeq \mathcal{H}^{p,q}_{\leq \lambda}(M, L^m)$.

Meanwhile, there are also many examples of compact CR manifolds with transversal CR $S^1$-action which are not
tube type  \cite{HL16}. For example, let $X=\{(z_1,z_2)\in \mathbb{C}^2: |z_1|^2+|z_1+z_2^2|^2+|z_2|^2=1\}$ which is a
compact CR manifold with a transversal CR $S^1$-action defined by
\begin{align*}
X\times S^1&\rightarrow X\\
(z_1, z_2)&\mapsto (e^{i\theta}z_1, e^{2i\theta}z_2).
\end{align*}
The $S^1$-action defined above is locally free and  free on a dense, connected  open subset
$\{(z_1, z_2)\in X: z_1\neq 0\}$. Note that the CR $S^1$ action on the boundary of a Grauert tube is globally free.
\end{rmk}

\section{Local picture: canonical local coordinates }\label{sect: local picture}
In this section, we draw the local picture for compcat connected CR manifolds with  transversal CR $S^1$-action.
The following result is due to Baouendi-Rothschild-Treves \cite{BRT85}.
\begin{thm}[c.f. \cite{BRT85}]\label{local coordinate}Let $X$ be a compact CR manifold of dim$_X=2n-1$,
$n\geq 2$ with a transversal CR $S^1$-action. Let $\langle \cdot|\cdot\rangle$ be the given $T$-rigid
Hermitian metric on $X$. For any point $x_0\in X$, there exists local coordinates
$(x_1,\cdots,x_{2n-1})=(z,\theta)=(z_1,\cdots,z_{n-1},\theta)$, $z_j=x_{2j-1}+ix_{2j}$,
$j=1,\cdots, n-1$, $x_{2n-1}=\theta$, defined in some small neighborhood
$D=\{(z,\theta):|z|<\varepsilon, |\theta|<\delta\}$ of $x_0$ such that
\begin{align*}
&T=\frac{\partial}{\partial \theta}\\
&Z_j=\frac{\partial}{\partial z_j}+i\frac{\partial\varphi(z)}{\partial z_j}\frac{\partial}
{\partial \theta}, j=1,\cdots, n-1,
\end{align*}
where $\{Z_j(x)\}^{n-1}_{j=1}$ form a basis of $T_x^{1,0}X$ for each $x\in D$ and
$\varphi(z)\in \mathcal{C}^\infty(D,\mathbb{R})$ is independent of $\theta$. Moreover, on $D$ we can take
$(z,\theta)$ and $\varphi$ so that $(z(x_0),\theta(x_0))=(0,0)$ and
$\varphi(z)=\sum_{j=1}^{n-1}\lambda_j|z_j|^2+O(|z|^3)$, $\forall (z,\theta)\in D$, where
$\{\lambda_j\}^{n-1}_{j=1}$ are the eigenvalues of Levi-form of $X$ at $x_0$ with respect to the
given $T$-rigid Hermitian metric on $X$.
\end{thm}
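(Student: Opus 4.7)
My strategy is to combine an application of Nirenberg's complex Frobenius theorem with a step-by-step normalization that exploits the remaining coordinate freedom. Because $T$ is a smooth nowhere-vanishing real vector field, a local real function $\theta$ with $T\theta=1$ and $\theta(x_0)=0$ exists; its zero level set $\Sigma$ is a smooth real hypersurface through $x_0$ transverse to $T$, and the flow of $T$ identifies a neighborhood of $x_0$ with $\Sigma\times(-\delta,\delta)$, giving $T=\partial/\partial\theta$.

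The heart of the argument is to produce complex coordinates on $\Sigma$. Transversality of the $S^1$-action ensures that the projection $\pi\colon D\to\Sigma$ along the $T$-flow maps $T^{1,0}X|_p$ isomorphically onto a complex subspace $V_{\pi(p)}\subset\mathbb{C}T_{\pi(p)}\Sigma$, which is independent of the fibre point thanks to the hypothesis $[T,\mathcal{C}^\infty(X,T^{1,0}X)]\subset \mathcal{C}^\infty(X,T^{1,0}X)$; consequently $V$ is a smooth complex subbundle of $\mathbb{C}T\Sigma$ whose intersection with its conjugate is trivial, defining an almost complex structure $J$ on $\Sigma$. The CR integrability $[T^{0,1}X,T^{0,1}X]\subset T^{0,1}X$, read on flow-invariant extensions, descends to the involutivity of $V$, hence to the vanishing of the Nijenhuis tensor of $J$. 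Newlander--Nirenberg then yields holomorphic coordinates $z=(z_1,\ldots,z_{n-1})$ on $\Sigma$ vanishing at $x_0$, and pulling these back by $\pi$ produces $T$-invariant functions $z_j$ on $D$ with $Tz_j=0$. Equivalently, one may invoke the complex Frobenius theorem directly on the involutive complex subbundle $\mathbb{C}T\oplus T^{1,0}X\subset\mathbb{C}TX$, skipping the slice.

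With the coordinates in hand the local form of the frame is essentially forced. By transversality and $Tz_j=0$ there is a unique basis $\{Z_j\}_{j=1}^{n-1}$ of $T^{1,0}X$ over $D$ with $Z_jz_k=\delta_{jk}$, so $Z_j=\partial/\partial z_j+c_j\,\partial/\partial\theta$. Short bracket computations give: $[T,Z_j]\in T^{1,0}X$ forces $\partial c_j/\partial\theta=0$, and $[Z_j,Z_k]\in T^{1,0}X$ forces $\partial c_k/\partial z_j=\partial c_j/\partial z_k$, so $\sum_j c_j\,dz_j=\partial\psi$ for a smooth complex function $\psi(z,\bar z)$. Replacing $\theta$ by $\theta-\operatorname{Re}\psi$ preserves $T=\partial/\partial\theta$ and turns the coefficients into $i\,\partial\varphi/\partial z_j$ with $\varphi:=\operatorname{Im}\psi$ real and $\theta$-independent, producing the desired local form.

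Finally, for the normalization at $x_0$, a direct bracket computation gives $\mathcal{L}_{x_0}(Z_j,\bar Z_k)=\varphi_{j\bar k}(0)$. The residual coordinate freedom is precisely: shifts $\theta\mapsto\theta+\operatorname{Re}F$ for $F$ holomorphic (which add $\operatorname{Im}F$ to $\varphi$), and biholomorphic changes of the $z_j$. Choosing $F$ with suitable constant, linear, and holomorphic quadratic parts successively absorbs $\varphi(0)$, $d\varphi(0)$, and the $(2,0)+(0,2)$-quadratic piece of $\varphi$ at $0$; a subsequent unitary change of $z$ with respect to the $T$-rigid metric diagonalizes the Hermitian Hessian $(\varphi_{j\bar k}(0))$ and produces $\varphi(z)=\sum_j\lambda_j|z_j|^2+O(|z|^3)$, with the $\lambda_j$'s the Levi eigenvalues at $x_0$. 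The main obstacle is the second paragraph: verifying that the induced $J$ on $\Sigma$ is well defined, smooth, and Nijenhuis-flat (equivalently, verifying the hypotheses of complex Frobenius for $\mathbb{C}T\oplus T^{1,0}X$); everything else is controlled bookkeeping.
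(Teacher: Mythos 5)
The paper does not actually prove this statement: it is quoted verbatim from Baouendi--Rothschild--Treves \cite{BRT85} (with the quadratic normalization of $\varphi$ as used in \cite{HL16}), so there is no in-paper argument to compare against. Your blind proof is, as far as I can check, a correct and essentially complete reconstruction of the standard argument, and it follows the same route as the cited source: straighten $T$ by its flow, observe that $[T,\mathcal{C}^\infty(X,T^{1,0}X)]\subset\mathcal{C}^\infty(X,T^{1,0}X)$ makes the CR structure flow-invariant so that it descends to an integrable complex structure on a transverse slice (equivalently, apply the complex Frobenius theorem to the involutive bundle $\mathbb{C}T\oplus T^{0,1}X$), solve $\partial\psi=\sum c_j\,dz_j$ by the $\partial$-Poincar\'e lemma, and absorb $\operatorname{Re}\psi$ into $\theta$. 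Your bracket computations are right: $[T,Z_j]\in T^{1,0}X$ forces $\partial c_j/\partial\theta=0$ and $[Z_j,Z_k]\in T^{1,0}X$ forces the symmetry of $\partial c_k/\partial z_j$, since a multiple of $\partial/\partial\theta$ lies in $T^{1,0}X$ only if it vanishes; and $[Z_j,\overline{Z}_k]=-2i\varphi_{j\bar k}\,\partial/\partial\theta$ combined with the paper's convention $\langle\omega_0,T\rangle=-1$ gives $\mathcal{L}_{x_0}(Z_j,\overline{Z}_k)=\varphi_{j\bar k}(0)$ with the correct sign. Two small points of hygiene: when you descend involutivity to the slice you should note that the $T$-invariant lifts are $\pi$-related to their projections so that brackets push forward; and the final change of the $z$-coordinates is in general an arbitrary invertible complex-linear map (chosen so that $Z_1(x_0),\dots,Z_{n-1}(x_0)$ become orthonormal for the $T$-rigid metric and diagonalize $\mathcal{L}_{x_0}$), not merely a unitary one, since Newlander--Nirenberg gives you no control on $\langle Z_j(x_0)|Z_k(x_0)\rangle$. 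Neither affects the validity of the argument.
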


\begin{rmk}
It was proved in \cite{HL16} that if $x_0\in X_{reg}$, $\delta$ can be taken to be $\pi$, and if $x_0$ is
not a regular point, say $x_0\in X_k$, $\delta$ can be taken to be any positive number smaller than $\frac{\pi}{k}$.
\end{rmk}

From Definition \ref{levi form}, by easy computation, one can get that
\begin{prop}
Let $X$ be a compact CR manifold of dimension $2n-1$, $n\geq 2$ with a transversal CR $S^1$-action.
Let $\langle\cdot|\cdot\rangle$ be the given $T$-rigid Hermitian metric on $X$. Let $D$ be a canonical
local patch with canonical coordinates $(z,\theta,\varphi)$ such that $(z,\theta,\varphi)$ is trivial
at $x_0$ as in Theorem \ref{local coordinate}. Suppose that $X$ is  weakly pseudoconvex, then
$i\partial\overline{\partial}\varphi\geq 0$ as a $(1,1)$-form on $\widetilde{D}$.
\end{prop}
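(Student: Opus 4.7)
The plan is to verify the proposition by directly computing the Levi form in the Baouendi–Rothschild–Treves coordinates and matching it against the Hessian of $\varphi$. Since Theorem \ref{local coordinate} provides an explicit frame $\{Z_j\}_{j=1}^{n-1}$ of $T^{1,0}X$ on $D$ together with the global vector field $T=\partial/\partial\theta$, and since weak pseudo-convexity is a pointwise condition on the Hermitian form $\mathcal{L}_x$ at every $x\in D$, it suffices to show that with respect to the frame $\{Z_j\}$ the matrix of $\mathcal{L}_x$ equals $\bigl(\partial^2\varphi/\partial z_j\partial\bar z_k\bigr)_{j,k}$ evaluated at the point corresponding to $x$.

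First I would compute the Lie bracket $[Z_j,\overline{Z}_k]$ using $Z_j=\partial_{z_j}+i\varphi_{z_j}\partial_\theta$ and $\overline{Z}_k=\partial_{\bar z_k}-i\varphi_{\bar z_k}\partial_\theta$. Because $\varphi$ is independent of $\theta$, the bracket $[\varphi_{z_j}\partial_\theta,\varphi_{\bar z_k}\partial_\theta]$ vanishes and the remaining two cross terms each contribute a $\partial_\theta$-component with coefficient $-i\varphi_{z_j\bar z_k}$; thus $[Z_j,\overline{Z}_k]=-2i\,\varphi_{z_j\bar z_k}\,T$. Next I would pin down $\omega_0$ in coordinates via its defining properties: writing $\omega_0=\sum a_j\,dz_j+\sum b_k\,d\bar z_k+c\,d\theta$ and imposing $\langle\omega_0,Z_j\rangle=\langle\omega_0,\overline{Z}_k\rangle=0$ together with $\langle\omega_0,T\rangle=-1$ yields $c=-1$ (which is the only value needed below), hence
\begin{equation*}
\mathcal{L}_x(Z_j,\overline{Z}_k)=\frac{1}{2i}\bigl\langle[Z_j,\overline{Z}_k](x),\omega_0(x)\bigr\rangle=\frac{1}{2i}\bigl(-2i\varphi_{z_j\bar z_k}\bigr)(-1)=\varphi_{z_j\bar z_k}(z).
\end{equation*}

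Finally, the weak pseudo-convexity of $X$ on $D$ says that the Hermitian matrix $(\mathcal{L}_x(Z_j,\overline{Z}_k))_{j,k}$ is positive semi-definite for each $x\in D$, which by the identification just obtained is exactly the statement that $\bigl(\varphi_{z_j\bar z_k}(z)\bigr)_{j,k}\ge 0$ for every $z\in\widetilde{D}$, i.e.\ $i\partial\bar\partial\varphi\ge 0$ on $\widetilde{D}$. I do not expect any real obstacle in this argument; the only mild care needed is with signs, since the Levi form convention is dictated by the normalization $\langle\omega_0,T\rangle=-1$, and one should verify that the two contributions to $[Z_j,\overline{Z}_k]$ combine with the same sign rather than cancelling.
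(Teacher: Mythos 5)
Your computation is correct and is precisely the ``easy computation'' the paper alludes to without writing out: in the Baouendi--Rothschild--Treves frame one finds $[Z_j,\overline{Z}_k]=-2i\,\varphi_{z_j\bar z_k}\,T$, and the normalization $\langle\omega_0,T\rangle=-1$ then gives $\mathcal{L}_x(Z_j,\overline{Z}_k)=\varphi_{z_j\bar z_k}$, so pseudo-convexity is equivalent to $i\partial\overline{\partial}\varphi\geq 0$ on $\widetilde{D}$. The signs all check out, and since positive semi-definiteness of a Hermitian form is basis-independent, using the (not necessarily orthonormal) frame $\{Z_j\}$ is harmless.
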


\begin{lemma}[ \cite{HL16}]\label{local section} Fix $x_0\in X$ and let
$D=\widetilde{D}\times (-\delta,\delta)\subset \mathbb{C}^{n-1}\times\mathbb{R}$ be a canonical local patch
with canonical coordinates $(z,\theta,\varphi)$ such that $(z,\theta,\varphi)$ is trivial at $x_0$. The
$T$-rigid Hermitian metric on $D$ induces an Hermitian metric on $T^{*1,0}$ in a standard way. Up to a
coordinate transformation if necessary, we can find orthonormal frame $\{e^j\}_{j=1}^{n-1}$ of $T^{*1,0}$
with respect to the fixed $T$-rigid Hermitian metric such that on $D$, we have
$e^j(x)=e^j(z)=dz_j+O(|z|), \forall x=(z,\theta)\in D$, $j=1,\cdots, n-1$. Moreover, if we denote by $dv_X$
the volume form with respect to the $T$-rigid Hermitian metric on $\mathbb{C}TX$, then on $D$ we have
$dv_X=\lambda(z)dv(z)d\theta$ with $\lambda(z)\in \mathcal{C}^\infty(\widetilde{D},\mathbb{R})$ which does
not depend on $\theta$ and $dv(z)=2^{n-1}dx_1\cdots dx_{2n-2}$.
\end{lemma}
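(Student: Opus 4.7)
The plan is to work in the BRT canonical coordinates from Theorem \ref{local coordinate}, smoothly orthonormalize the frame $\{Z_j\}$ over $D$ via a matrix square root of its Gram matrix (after a unitary change of $z$-coordinates if needed), take the $T^{*1,0}$-dual, and read off the volume form using $T$-rigidity in real coordinates $z_j = x_{2j-1}+ix_{2j}$.

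For the orthonormal frame, observe that $Z_j = \partial/\partial z_j + i(\partial\varphi/\partial z_j)\partial/\partial\theta$ satisfies $Z_j(x_0) = \partial/\partial z_j|_0$ (since $\partial_{z_j}\varphi = O(|z|)$) and that $\{dz_j\}$ is the dual frame of $\{Z_j\}$ inside $T^{*1,0}X$: directly $dz_j(T) = 0$, $dz_j(\overline{Z}_k) = 0$, and $dz_j(Z_k) = \delta_{jk}$. Because the $S^1$-action is $(z,\theta) \mapsto (z,\theta+t)$ in these coordinates and $Z_j$ is $\theta$-independent, $Z_j$ is $T$-rigid; together with $T$-rigidity of the metric this forces the Gram matrix $G(z) := (\langle Z_j|Z_k\rangle)$ to be smooth and $\theta$-free. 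The BRT normalization $\varphi = \sum_j \lambda_j|z_j|^2 + O(|z|^3)$ with $\lambda_j$ the Levi eigenvalues is consistent with $G(0) = I$; a unitary coordinate change $z \mapsto Uz$ (which preserves the canonical form, only replacing $\varphi$ by $\varphi \circ U$) provides this normalization if it is not already present. Setting $A(z) := G(z)^{-1/2}$, one gets a smooth matrix with $A(0) = I$, and the frame $U_j := \sum_k A_{kj}(z) Z_k$ is orthonormal near $0$. Its $T^{*1,0}$-dual $e^j := \sum_k (A^{-1})_{jk}(z)\, dz_k$ is orthonormal, independent of $\theta$, and satisfies $e^j(z) = dz_j + O(|z|)$.

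For the volume form, write $z_j = x_{2j-1} + ix_{2j}$ and consider the real coordinates $(x_1,\ldots,x_{2n-2},\theta)$ on $D$. Since the $S^1$-action acts only on $\theta$, each of $\partial/\partial x_a$ and $\partial/\partial\theta$ is $T$-rigid, so by $T$-rigidity of the metric every entry of the real Gram matrix $g_{ab}(z,\theta) := \langle \partial/\partial x_a|\partial/\partial x_b\rangle_{\mathbb{R}}$ depends only on $z$. Hence
\begin{align*}
dv_X = \sqrt{\det g(z)}\, dx_1\wedge\cdots\wedge dx_{2n-2}\wedge d\theta = \lambda(z)\, dv(z)\, d\theta,
\end{align*}
with $dv(z) := 2^{n-1}\, dx_1\cdots dx_{2n-2}$ and $\lambda(z) := 2^{-(n-1)}\sqrt{\det g(z)} \in \mathcal{C}^\infty(\widetilde{D},\mathbb{R})$.

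The only point requiring attention is the $\theta$-independence of the various Gram matrices, which follows uniformly from the observation that in canonical coordinates the $S^1$-action is translation in $\theta$ alone, making all coordinate vector fields $T$-rigid; $T$-rigidity of the metric then propagates this $\theta$-independence to every quantity built from them. The remaining steps—Gram--Schmidt via $G^{-1/2}$, the dual frame computation, and the determinant identity—are routine smooth linear algebra.
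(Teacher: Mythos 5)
The paper does not prove this lemma; it is imported verbatim from \cite{HL16}, so there is no in-paper argument to compare against. Your construction --- $\theta$-independence of the Gram matrix of the BRT frame $\{Z_j\}$ via $T$-rigidity, orthonormalization by $G(z)^{-1/2}$, dualization to get $e^j=dz_j+O(|z|)$, and the same $T$-rigidity argument applied to the real coordinate frame to factor $dv_X=\lambda(z)\,dv(z)\,d\theta$ --- is the standard proof and is correct. The only imprecision is the claim that a \emph{unitary} change $z\mapsto Uz$ achieves $G(0)=I$: a unitary change preserves but cannot create orthonormality at the origin, so one either needs a general linear change (followed by a unitary to restore the diagonal form of the quadratic part of $\varphi$), or one simply observes that Theorem \ref{local coordinate} already builds in $G(0)=I$, since the $\lambda_j$ there are the Levi eigenvalues with respect to the fixed metric, which presupposes $\{Z_j(x_0)\}$ orthonormal.
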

\begin{rmk}[c.f. \cite{HL16}]\label{induced metric}
For any $x_0\in X$, let $D=\widetilde{D}\times (-\delta,\delta)\subset \mathbb{C}^{n-1}\times\mathbb{R}$ be a
canonical local patch with canonical coordinates $(z,\theta,\varphi)$ such that $(z,\theta,\varphi)$ is
trivial at $x_0$. We identify $\widetilde{D}$ with an open subset of $\mathbb{C}^{n-1}$ with complex
coordinates $z=(z_1,\cdots,z_{n-1})$. Since $\{dz_j\}_{j=1}^{n-1}$ is a frame of $T^{*1,0}D$ over $D$, we will
treat them as the frame of $T^{*1,0}\widetilde{D}$ which is the bundle of $(1,0)$-forms over the domain
$\widetilde{D}$. Let $(g^{\bar{k}j}(z))$ be the Hermitian metric on $T^{*1,0}\widetilde{D}$ defined in the
proof of Lemma \ref{local section}.  It induces  Hermitian metrics on $T^{1,0}\widetilde{D}$  and
$T^{*,p,q}\widetilde{D}$ in a canonical way. We denote by the induced Hermitian metric on
$T^{1,0}\widetilde{D}$ by $\omega$.
\end{rmk}
\begin{rmk}
Under  the local canonical coordinate and the metric chosen in  Lemma \ref{local section}, one
can see that on $\widetilde{D}$, $\omega=\sum_{j=1}^{n-1} e^j\wedge \overline{e}^j$, and
$\omega(x_0)=\sum_{j=1}^{n-1}dz_j\wedge d\overline{z}_j$. Then  the volume form on
$\widetilde{D}$ is given by $\omega^{n-1}:=\frac{\omega^{n-1}}{(n-1)!}=\lambda(z)dv(z)$. Here,
$\lambda(z)\in\mathcal{C}^\infty(\widetilde{D},\mathbb{R})$ is the function defined in Lemma \ref{local section}.
\end{rmk}

\section{Local representations  of $\overline{\partial}_b$, $\overline{\partial}^*_b$ and $\Box^{(p,q)}_{b,m}$}\label{sect: local rep}
By the same calculations as in \cite{HL16}, we have the following local representation of the operators mentioned above.
\begin{lemma}\label{debarb-debar}
	For all $u\in \Omega^{p,q}_m(X)$, on $D$ we have
	\begin{align*}
	\overline{\partial}_b &u=e^{im\theta}e^{-m\varphi}\overline{\partial}(e^{m\varphi}e^{-im\theta}u),\overline{\partial}^*_bu=e^{im\theta}e^{-m\varphi}\overline{\partial}^{*,2m\varphi}(e^{m\varphi}e^{-im\theta}u)\\
	&\Box^{(p,q)}_{b,m}u=e^{im\theta}e^{-m\varphi}\Box^{(p,q)}_{2m\varphi}(e^{m\varphi}e^{-im\theta}u).
	\end{align*}
\end{lemma}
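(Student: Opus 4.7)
The plan is to compute in the canonical coordinates of Theorem \ref{local coordinate}, where $T=\partial_\theta$ and $Z_j=\partial_{z_j}+i(\partial_{z_j}\varphi)\partial_\theta$, so $\overline{Z}_j=\partial_{\bar z_j}-i(\partial_{\bar z_j}\varphi)\partial_\theta$, and $\varphi$ is independent of $\theta$. By Lemma \ref{local section} the orthonormal frame $\{e^j\}$ of $T^{*1,0}X$ is purely of the form $e^j=e^j(z)$ (no $d\theta$), so any $u\in\Omega^{p,q}_m(X)$ on $D$ can be expanded as $u=\sum_{|I|=p,|J|=q}u_{IJ}(z,\theta)\,e^I\wedge\overline{e}^J$, and the eigenvalue equation $Tu=imu$ forces $u_{IJ}(z,\theta)=e^{im\theta}\hat u_{IJ}(z)$. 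Writing $\tilde u:=e^{m\varphi}e^{-im\theta}u=\sum \hat u_{IJ}(z)e^{m\varphi}\,e^I\wedge\overline{e}^J$, this is a genuine $(p,q)$-form on $\widetilde{D}\subset\mathbb{C}^{n-1}$.

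The first identity comes from the pointwise observation
\[
\overline{Z}_j\bigl(e^{im\theta}f(z)\bigr)=e^{im\theta}\bigl(\partial_{\bar z_j}f+m(\partial_{\bar z_j}\varphi)f\bigr)=e^{im\theta}e^{-m\varphi}\partial_{\bar z_j}\bigl(e^{m\varphi}f\bigr),
\]
which I would first verify for scalar Fourier modes and then extend to forms by noting that, in the frame $\{e^j\}$, the operator $\overline{\partial}_b$ takes the form $\overline{\partial}_b=\sum_j\overline{e}^j\wedge\overline{L}_j(\cdot)+(\text{zeroth-order terms involving }\overline{\partial}_b e^I,\overline{\partial}_b\overline{e}^J)$, where the latter depend only on $z$ (the frame is $\theta$-independent) and so match the corresponding zeroth-order terms produced by the $\overline{\partial}$ on $\widetilde{D}$. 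Collecting, $\overline{\partial}_b u = e^{im\theta}e^{-m\varphi}\overline{\partial}\tilde u$.

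For the adjoint, I would test against an arbitrary $v\in\Omega^{p,q-1}_m(X)$ compactly supported in $D$. Using $dv_X=\lambda(z)\,dv(z)\,d\theta$ from Lemma \ref{local section} and the fact that $|e^{im\theta}|=1$, the substitution $\tilde v=e^{m\varphi}e^{-im\theta}v$ yields
\[
(\overline{\partial}_b v\mid u)_X=\int_{-\delta}^{\delta}\!\!\int_{\widetilde D}\langle\overline{\partial}\tilde v\mid\tilde u\rangle_\omega e^{-2m\varphi}\lambda(z)\,dv(z)\,d\theta=2\delta\,(\overline{\partial}\tilde v\mid\tilde u)_{2m\varphi},
\]
where the weight $e^{-2m\varphi}$ is exactly the one defining $\overline{\partial}^{*,2m\varphi}$. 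Transferring the derivative to $\tilde u$ and reversing the change of variable gives $(v\mid\overline{\partial}_b^* u)_X=(v\mid e^{im\theta}e^{-m\varphi}\overline{\partial}^{*,2m\varphi}\tilde u)_X$, and since $v$ is arbitrary the second identity follows. The third identity is then immediate from $\Box_{b,m}^{(p,q)}=\overline{\partial}_b\overline{\partial}_b^*+\overline{\partial}_b^*\overline{\partial}_b$ by composing the first two, because the conjugating factors $e^{im\theta}e^{-m\varphi}$ and $e^{-im\theta}e^{m\varphi}$ cancel between successive applications.

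The only real obstacle is bookkeeping: one has to check that the zeroth-order contributions from $\overline{\partial}_b$ applied to the frame elements $\overline{e}^j$ are indeed the ``same'' as those for $\overline{\partial}$ on $\widetilde D$, which is guaranteed precisely because the frame in Lemma \ref{local section} has no $d\theta$-component and no $\theta$-dependence. Once this is confirmed, all three formulas reduce to direct calculations.
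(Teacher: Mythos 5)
Your proposal is correct and follows essentially the same route as the paper, which simply defers to the direct computation in BRT canonical coordinates carried out in \cite{HL16}: expand $u$ in the $\theta$-independent frame $e^I\wedge\overline{e}^J$, use $Tu=imu$ to write the coefficients as $e^{im\theta}\hat u_{IJ}(z)$, apply the pointwise identity for $\overline{Z}_j$, and obtain the adjoint and Laplacian formulas by the weighted $L^2$ pairing with $dv_X=\lambda(z)\,dv(z)\,d\theta$. The bookkeeping points you flag (the frame having no $d\theta$-component and no $\theta$-dependence, and the density of Fourier-mode test forms compactly supported in $z$) do check out, so no gap remains.
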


Based on Lemma \ref{debarb-debar}, we go a little bit further by direct computations to get the following

\begin{lemma}\label{eigenvalue comparison}
	Suppose that $u\in \Omega^{p,q}_m(X)$ satisfies $\Box^{(p,q)}_{b,m} u=\lambda u$. We define  $\widetilde{u}:=e^{m\varphi}e^{-im\theta}u$, then $\widetilde{u}\in \Omega^{p,q}_m(\widetilde{D})$ and  the following equality holds on $\widetilde{D}$:
	\begin{align*}
	\Box^{(p,q)}_{2m\varphi}\widetilde{u}=\lambda \widetilde{u}.
	\end{align*}
	Furthermore, for any $u\in \mathcal{H}^{p,q}_{b,m,\leq \sigma}(X)$, we get a form $\widetilde{u}\in \mathcal{H}^{p,q}_{2m\varphi,\leq \sigma}(\widetilde{D})$, where $\mathcal{H}^{p,q}_{b,m,\leq \sigma}(X)$ (resp. $\mathcal{H}^{p,q}_{2m\varphi,\leq \sigma}(\widetilde{D})$) is the linear span of the eigenforms of $\Box^{(p,q)}_{b,m}$ (resp. $\Box^{(p,q)}_{2m\varphi}$ ) with eigenvalue less than or equal to $\sigma$ on $X$ (resp. on $\widetilde{D}$).
	
\end{lemma}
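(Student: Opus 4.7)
The plan is to read this off directly from the preceding Lemma \ref{debarb-debar}, with only a brief verification that the transformed object $\widetilde{u}$ lives in the claimed space.

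First, I would check that $\widetilde{u}=e^{m\varphi}e^{-im\theta}u$ is well-defined as a form on $\widetilde{D}$ (i.e.\ independent of $\theta$). Because $u\in \Omega^{p,q}_m(X)$ means $Tu=imu$, and $T=\partial/\partial\theta$ in the canonical coordinates of Theorem \ref{local coordinate}, the $\theta$-dependence of $u$ on $D$ is pure phase: $u(z,\theta)=e^{im\theta}\hat u(z)$ for a $(p,q)$-form $\hat u$ on $\widetilde{D}$ (expanded in the frame $\{e^j\}$ from Lemma \ref{local section}, whose components are $\theta$-independent). Since $\varphi$ also does not depend on $\theta$, $\widetilde u=e^{m\varphi(z)}\hat u(z)$ is a smooth $(p,q)$-form on $\widetilde{D}$, which justifies writing $\widetilde u\in \Omega^{p,q}_m(\widetilde{D})$.

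Next, for the eigenvalue identity, I would just plug into Lemma \ref{debarb-debar}. That lemma gives the conjugation formula
\begin{equation*}
\Box^{(p,q)}_{b,m}u=e^{im\theta}e^{-m\varphi}\Box^{(p,q)}_{2m\varphi}\bigl(e^{m\varphi}e^{-im\theta}u\bigr)
=e^{im\theta}e^{-m\varphi}\Box^{(p,q)}_{2m\varphi}\widetilde u.
\end{equation*}
Combining with the eigenvalue hypothesis $\Box^{(p,q)}_{b,m}u=\lambda u$ and multiplying both sides by the nowhere-vanishing factor $e^{m\varphi}e^{-im\theta}$ yields
\begin{equation*}
\Box^{(p,q)}_{2m\varphi}\widetilde u=\lambda\,e^{m\varphi}e^{-im\theta}u=\lambda\widetilde u
\end{equation*}
on $\widetilde{D}$, which is the desired equality.

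Finally, for the ``furthermore'' statement I would simply use linearity together with the spectral decomposition of $\Box^{(p,q)}_{b,m}$ provided by Theorem \ref{hodge}. Any $u\in \mathcal{H}^{p,q}_{b,m,\leq\sigma}(X)$ admits a finite decomposition $u=\sum_j u_j$ with $\Box^{(p,q)}_{b,m}u_j=\lambda_j u_j$, $\lambda_j\leq\sigma$. The assignment $u\mapsto\widetilde u$ is $\mathbb{C}$-linear, and by the previous paragraph each $\widetilde{u_j}$ is an eigenform of $\Box^{(p,q)}_{2m\varphi}$ on $\widetilde{D}$ with the same eigenvalue $\lambda_j\leq\sigma$. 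Thus $\widetilde u=\sum_j\widetilde{u_j}\in\mathcal{H}^{p,q}_{2m\varphi,\leq\sigma}(\widetilde{D})$, completing the proof. There is no real obstacle here—the content is entirely in Lemma \ref{debarb-debar}, and the only thing to be careful about is the book-keeping of the $\theta$-independence that makes $\widetilde u$ a form on $\widetilde{D}$ rather than merely on $D$.
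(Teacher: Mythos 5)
Your proposal is correct and follows exactly the route the paper intends: the paper gives no separate proof of this lemma, stating only that it follows from Lemma \ref{debarb-debar} ``by direct computations,'' and your argument—conjugating the eigenvalue equation by the nowhere-vanishing factor $e^{m\varphi}e^{-im\theta}$ and then extending to $\mathcal{H}^{p,q}_{b,m,\leq\sigma}(X)$ by linearity via the spectral decomposition of Theorem \ref{hodge}—is precisely that computation. Your preliminary check that the $\theta$-dependence of $u$ is a pure phase $e^{im\theta}$ in the BRT coordinates, so that $\widetilde{u}$ genuinely descends to $\widetilde{D}$, is the right bookkeeping and is consistent with how the paper uses $\widetilde{u}$ in Section \ref{proof}.
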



Now we recall the so-called Siu's $\partial\overline{\partial}$-formula.  Let $(L,h)$ be a holomorphic Hermitian line bundle over a compact complex $n$-fold $(X,\omega)$ and $\alpha$ be a $L$-valued $(n,q)$-form. The Hodge-$*$ operator is defined by the formula
\begin{align}\label{Hodge star}
\alpha\wedge \overline{*\alpha}=|\alpha|^2\omega_n,
\end{align}
where $\omega_n=\omega^n/n!$. We define an $(n-q,n-q)$-form $T_\alpha$ associated to $\alpha$ in a local trivialization as
\begin{align}\label{Talpha}
T_\alpha=c_{n-q}\gamma\wedge\overline{\gamma}e^{-\psi},
\end{align}
where $\gamma=*\alpha$, $c_{n-q}=i^{(n-q)^2}$ and $\psi$ defines the metric of $L$.  Note that the form $T_\alpha$ is  well defined globally.

\begin{lemma}[c.f. \cite{Ber02}]\label{Siu formula 1}
	Let $\alpha$ be an $L$-valued $(n,q)$-form. If $\alpha$ is $\overline{\partial}$-closed
	\begin{align*}
	i\partial\overline{\partial}(T_\alpha\wedge \omega_{q-1})\geq \big(-2Re\langle \Box \alpha,\alpha\rangle+\langle\Theta_L\wedge\Lambda\alpha,\alpha\rangle-c|\alpha|^2\big)\omega_n
	,\end{align*}
	where $\Theta_L$ is the curvature of $(L,h)$ and   locally $\Theta_L$ can be written as $\Theta_L=i\partial\overline{\partial}\psi$ if  $\psi$ is  the local potential of $h$, i.e. $h=e^{-\psi}$. The constant $c$ is equal to zero if $\overline{\partial}\omega_{q-1}=\overline{\partial}\omega_q=0$.
\end{lemma}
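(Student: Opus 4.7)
The plan is to prove the inequality pointwise, since both sides are $(n,n)$-forms. I would work in a local trivialization of $L$ over a chart $U$ with a holomorphic frame $e_L$ satisfying $|e_L|^2_h = e^{-\psi}$, and write $\alpha = f\otimes e_L$ for a scalar $(n,q)$-form $f$. Then $*\alpha = (*f)\otimes e_L$ with $*$ the unweighted Hodge star of $(X,\omega)$, so
\[
T_\alpha = c_{n-q}\,(*f)\wedge\overline{(*f)}\,e^{-\psi}
\]
is a scalar $(n-q,n-q)$-form, and $\overline\partial\alpha=0$ locally reduces to $\overline\partial f=0$.

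Writing $\gamma = *f$, which is of pure bidegree $(n-q,0)$, I expand $i\partial\overline\partial T_\alpha$ by Leibniz. Many mixed terms vanish for type reasons, and the surviving contributions organize as
\[
i\partial\overline\partial T_\alpha = c_{n-q}\bigl(-\,i\,\partial\gamma\wedge\overline{\partial\gamma}\,e^{-\psi} + \gamma\wedge\overline\gamma\,e^{-\psi}\wedge\Theta_L + R\bigr),
\]
where $\Theta_L = i\partial\overline\partial\psi$ is the curvature and $R$ collects the cross terms linear in $\partial\psi$ and $\overline\partial\psi$. Wedging the curvature piece with $\omega_{q-1}$ and invoking the standard primitive-decomposition identity relating $\omega\wedge$ with $\Lambda$ produces exactly $\langle\Theta_L\wedge\Lambda\alpha,\alpha\rangle\,\omega_n$, which is the curvature contribution on the right-hand side of Siu's formula.

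The key step is to recognize the remaining quadratic expression $-i\,\partial\gamma\wedge\overline{\partial\gamma}\,e^{-\psi}\wedge\omega_{q-1}$, combined with $R\wedge\omega_{q-1}$, as equal to $-2\operatorname{Re}\langle\Box\alpha,\alpha\rangle\omega_n$ up to a nonpositive residual. For this I would use $\overline\partial\alpha=0$ together with the formal-adjoint identity $\overline\partial^*\alpha = -{*}\,\partial_L\,{*}\,\alpha$, where $\partial_L = \partial - \partial\psi\wedge$ is the Chern $(1,0)$-connection; this converts $\partial\gamma-\partial\psi\wedge\gamma$ into the Hodge dual of $\overline\partial^*\alpha$. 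Since $\alpha$ has top $(1,0)$-degree on an $n$-fold, $\partial\alpha=0$ automatically, so $\Box\alpha = \overline\partial\,\overline\partial^*\alpha$ and pairing pointwise with $\alpha$ matches the quadratic piece to $\operatorname{Re}\langle\Box\alpha,\alpha\rangle$. Finally, the Leibniz tail
\[
i\partial\overline\partial(T_\alpha\wedge\omega_{q-1}) - i\partial\overline\partial T_\alpha\wedge\omega_{q-1} = i\partial T_\alpha\wedge\overline\partial\omega_{q-1} - i\,\overline\partial T_\alpha\wedge\partial\omega_{q-1} + T_\alpha\wedge i\partial\overline\partial\omega_{q-1}
\]
vanishes identically when $\omega$ is K\"ahler (so $c=0$), and is otherwise bounded pointwise by $c|\alpha|^2\omega_n$ via Cauchy--Schwarz and the smoothness/boundedness of the torsion of $\omega$. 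Assembling the pieces and discarding the nonpositive residual yields the claimed inequality. The principal obstacle is the sign-and-conjugation bookkeeping in the $L$-twisted Hodge star identity used above: one must verify that the cross pieces in $R$ absorb correctly into the main quadratic term so that the leftover residual has the right sign to be dropped. Everything else in the computation is essentially organizational.
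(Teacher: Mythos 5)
First, a point of reference: the paper does not prove this lemma at all --- it is quoted from Berndtsson \cite{Ber02} (essentially his Lemma 2.2, which is Siu's $\partial\overline{\partial}$-Bochner--Kodaira trick), so your sketch is measured against that computation. Your overall strategy (trivialize $L$, set $\gamma=*f$, expand $i\partial\overline{\partial}T_\alpha$ by Leibniz, extract the curvature term by the primitive-form identity, control the non-K\"ahler tail) is the correct one, and the identities $\overline{\partial}^*=-*\partial_L*$ and $\Box\alpha=\overline{\partial}\,\overline{\partial}^*\alpha$ are the right ingredients (though the latter holds because $\overline{\partial}\alpha=0$ by hypothesis, not because $\partial\alpha=0$ for degree reasons).

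There are, however, two genuine gaps. (i) Your expansion of $i\partial\overline{\partial}T_\alpha$ retains only the curvature term, the first-order quadratic term $-i\partial\gamma\wedge\overline{\partial\gamma}\,e^{-\psi}$, and ``cross terms linear in $\partial\psi$''; this omits the term $\pm i\,\overline{\partial}\gamma\wedge\overline{\overline{\partial}\gamma}\,e^{-\psi}$ and, crucially, the second-order terms of the shape $2\,\mathrm{Re}\big(\pm i\,\partial_\psi\overline{\partial}\gamma\wedge\overline{\gamma}\big)e^{-\psi}$. None of these vanish ``for type reasons'': all have bidegree $(n-q+1,n-q+1)$, and $\overline{\partial}f=0$ does not imply $\overline{\partial}\gamma=\overline{\partial}(*f)=0$ because $*$ does not commute with $\overline{\partial}$. (ii) The step you call the key step cannot work as described: pointwise, $\langle\Box\alpha,\alpha\rangle=\langle\overline{\partial}\,\overline{\partial}^*\alpha,\alpha\rangle$ is genuinely second order in $\alpha$, so it cannot be recovered from a purely first-order quadratic expression up to a sign-definite residual (take $\alpha$ with $\overline{\partial}^*\alpha=0$ at a point but $\mathrm{Re}\langle\overline{\partial}\,\overline{\partial}^*\alpha,\alpha\rangle\neq 0$ there, of either sign). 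In the actual argument $-2\,\mathrm{Re}\langle\Box\alpha,\alpha\rangle$ is produced by the omitted second-order terms --- the analogue of $2\,\mathrm{Re}(\bar u\,\Delta u)$ in $\Delta|u|^2=2\,\mathrm{Re}(\bar u\,\Delta u)+2|\nabla u|^2$ --- while the first-order term $-|\partial_\psi\gamma|^2=-|\overline{\partial}^*\alpha|^2$ recombines with the accompanying $+2|\overline{\partial}^*\alpha|^2$ to leave a \emph{nonnegative} quantity $+|\overline{\partial}^*\alpha|^2$ that is then discarded (note also that a residual must be nonnegative, not nonpositive, to be droppable from the left-hand side of a ``$\geq$''). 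Relatedly, the Leibniz tail $i\partial T_\alpha\wedge\overline{\partial}\omega_{q-1}$ etc.\ contains terms linear in $\partial_\psi\gamma$ and $\overline{\partial}\gamma$, hence is of size $|\nabla\gamma|\,|\gamma|$ and is \emph{not} bounded pointwise by $c|\alpha|^2$; it must be absorbed by Cauchy--Schwarz into the same discarded positive first-order terms, which is exactly how the constant $c$ arises and why only $c|\alpha|^2$ survives in the statement.
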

\begin{rmk}\label{Siu formula 2}The expressions of the operator $\overline{\partial}^*$ maybe different in the compact case and in the noncompact case. But if we consider it as the formal adjoint of the operator $\overline{\partial}$, the expressions should always be the same. If we consider the formal adjoint, the formula in Lemma \ref{Siu formula 1} is pointwise in its nature.
\end{rmk}


\section{A preparation for localization procedure:  the scaling technique}\label{scaling}
In this section, we prepare necessary tools for the localization precedure needed in the proof of Theorem \ref{main theorem}.  This is a slight  modification of results in \cite{HL16}, say from  forms of  $(0,q)$ type to forms of $p,q$ type.

Fix $x_0\in X$, we take canonical local patch
$D=\widetilde{D}\times (-\delta,\delta)=\{(z,\theta):|z|<\varepsilon, |\theta|<\delta\}$ with canonical
coordinates $(z,\theta,\varphi)$ such that $(z,\theta,\varphi)$ is trivial at $x_0$. In this section, we
identify $\widetilde{D}$ with an open subset of $\mathbb{C}^{n-1}=\mathbb{R}^{2n-2}$ with complex
coordinates $z=(z_1,\cdots, z_{n-1})$.  Let $L_1\in T^{1,0}\widetilde{D}, \cdots, L_{n-1}\in T^{1,0}\widetilde{D} $
be the dual frame of $e^1,\cdots,e^{n-1}$ with respect to the $T$-rigid Hermitian metric $\langle\cdot|\cdot\rangle$
defined in Remark \ref{induced metric}. Let $\omega$ be  the induced Hermitian metric  on $T^{1,0}\widetilde{D}$.

Let $\Omega^{p,q}(\widetilde{D})$ be the space of smooth $(p,q)$-forms
on $\widetilde{D}$ and let $\Omega^{p,q}_0(\widetilde{D})$ be the subspace of $\Omega^{p,q}(\widetilde{D})$ whose elements have compact support in $\widetilde{D}$.
Let $\big(\cdot|\cdot\big)_{2\varphi}$ be the weighted inner product on the space $\Omega^{p,q}_0(\widetilde{D})$
defined as follows:
\begin{align*}
\big( f|g\big)=\int_{\widetilde{D}}\langle f|g\rangle e^{-2\varphi(z)}\lambda(z)dv(z)
\end{align*}
where $f,g\in \Omega^{p,q}_0(\widetilde{D})$ and $\lambda(z)$ is as in Remark \ref{induced metric}. We denote
by $L^2_{(p,q)}(\widetilde{D},2\varphi)$ the completion of $\Omega^{p,q}_0(\widetilde{D})$ with respect to
$\big(\cdot|\cdot\big)_{2\varphi}$. For $r>0$, let $\widetilde{D}_r=\{z\in \mathbb{C}^{n-1}:|z|<r\}$. Here
$\{z\in \mathbb{C}^{n-1}:|z|<r\}$ means that $\{z\in \mathbb{C}^{n-1}:|z_j|<r,j=1,\cdots,n-1\}$. For
$m\in \mathbb{N}$, let $F_m$ be the scaling map
$F_m(z)=\big(\frac{z_1}{\sqrt{m}},\cdots,\frac{z_{n-1}}{\sqrt{m}}\big)$, $z\in \widetilde{D}_{\log m}$.
From now on, we assume $m$ is sufficiently large such that $F_m(\widetilde{D}_{\log m})\subset\subset \widetilde{D}$.
We define the scaled bundle $F^*_mT^{*p,q}\widetilde{D}$ on $\widetilde{D}_{\log m}$  to be the bundle whose
fiber at $z\in \widetilde{D}_{\log m}$ is
\begin{align*}
F^*_mT^{*p,q}\widetilde{D}|_z=\Big\{\mathop{{\sum}'}_{|I|=p,|J|=q}a_{IJ}e^I\big(
\frac{z}{\sqrt{m}}\big)\wedge \overline{e^J}\big(
\frac{z}{\sqrt{m}}\big): \\a_{IJ}\in \mathbb{C}, I,J\mbox{~ strictly~increasing}\Big\}.
\end{align*}
We take the Hermitian metric $\langle\cdot|\cdot\rangle_{F^*_m}$ on $F_m^*T^{*p,q}\widetilde{D}$ so that at
each point $z\in \widetilde{D}_{\log m}$,
\begin{align*}
\Big\{ e^I\big(
\frac{z}{\sqrt{m}}\big)\wedge \overline{e^J}\big(
\frac{z}{\sqrt{m}}\big):|I|=p,|J|=q, I,J\mbox{~ strictly~increasing}\Big\}
\end{align*}
is an orthonormal frame for $F_m^*T^{*p,q}\widetilde{D}$ on $\widetilde{D}_{\log m}$.

Let $F^*_m\Omega^{p,q}_0(\widetilde{D}_r)$ denote the space of smooth sections of $F_m^*\Omega^{p,q}(\widetilde{D}_r)$
whose elements have compact support in $\widetilde{D}_r$. Given $f\in \Omega^{p,q}(\widetilde{D}_r)$. We write
$f=\sum'_{|J|=q}f_{IJ}e^I\wedge\overline{e^J}$. We define the scaled form
$F^*_mf\in F^*\Omega^{p,q}(\widetilde{D}_{\log m})$ by
\begin{align*}
F^*_mf=\mathop{{\sum}'}_{IJ}f_{IJ}(\frac{z}{\sqrt{m}})e^I\big(
\frac{z}{\sqrt{m}}\big)\wedge \overline{e^J}\big(
\frac{z}{\sqrt{m}}\big)p, z\in \widetilde{D}_{\log m}.
\end{align*}
For brevity, we denote $F^*_mf$ by $f(\frac{z}{\sqrt{m}})$. Let $P$ be a partial differential operator of
order one on $F_m{\widetilde{D}_{\log m}}$ with $\mathcal{C}^\infty$ coefficients. We write
$P=\sum_{j=1}^{2n-2}a_j(z)\frac{\partial}{\partial x_j}$. The scaled partial differential operator $P_{(m)}$
on $\widetilde{D}_{\log m}$ is given by $P_{(m)}=\sum_{j=1}^{2n-2}F^*_ma_j\frac{\partial}{\partial x_j}$.
Let  $f\in \mathcal{C}^\infty(F_m(\widetilde{D}_{\log m}))$. We can check that
\begin{align}\label{scaled P}
P_{(m)}(F^*_mf)=\frac{1}{\sqrt{m}}F^*_m(Pf).
\end{align}
Let $\overline{\partial}:\Omega^{p,q}(\widetilde{D})\rightarrow \Omega^{p,q+1}(\widetilde{D})$ be the
Cauchy-Riemann operator and we have
\begin{align*}
\overline{\partial}=\sum^{n-1}_{j=1}\overline{e^j}(z)\wedge \overline{L}_j+\sum_{j=1}^{n-1}(\overline{\partial}\overline{e^j})(z)\wedge(\overline{e^j}(z)\wedge)^*
\end{align*}
where $(\overline{e^j}(z)\wedge)^*:T^{*p,q}\widetilde{D}\rightarrow T^{*p,q-1}\widetilde{D}$ is the adjoint  of $\overline{e^j}(z)\wedge$ with respect to the Hermitian metric $\langle\cdot|\cdot\rangle$ on $T^{*p,q}\widetilde{D}$ given in Remark \ref{induced metric}, $j=1,\cdots, n-1$. That is
\begin{align*}
\langle e^j(z)\wedge u|v\rangle=\langle u|(e^j(z)\wedge)^*v\rangle
\end{align*}
for all $u\in T^{*p,q-1}\widetilde{D}$, $v\in T^{*p,q}\widetilde{D}$.
The scaled differential operator $\overline{\partial}_{(m)}:F^*_m\Omega^{p,q}(\widetilde{D}_{\log m})\rightarrow F^*_m\Omega^{p,q+1}(\widetilde{D}_{\log m})$ is given by
\begin{align}\label{scaled debar}
\overline{\partial}_{(m)}=\sum_{j=1}^{n-1}\overline{e^j}(\frac{z}{\sqrt{m}})\wedge\overline{L}_{j,(m)}+\sum_{j=1}^{n-1}\frac{1}{\sqrt{m}}(\overline{\partial}\overline{e^j})(\frac{z}{\sqrt{m}})\wedge (\overline{e^j}(\frac{z}{\sqrt{m}}))^*.
\end{align}
Similarly, $(\overline{e^j}(\frac{z}{\sqrt{m}})\wedge)^*: F^*_mT^{*p,q}\widetilde{D}\rightarrow F^*_mT^{*p,q-1}\widetilde{D}$ is the adjoint of $\overline{e^j}(\frac{z}{\sqrt{m}})\wedge$ with respect to $\langle\cdot|\cdot\rangle_{F^*_m}$, $j=1,\cdots,n-1$. From (\ref{scaled P}) and (\ref{scaled debar}), $\overline{\partial}_{(m)}$ satisfies that
\begin{align*}
\overline{\partial}_{(m)}F^*_mf=\frac{1}{\sqrt{m}}F^*_m(\overline{\partial} f), ~\forall f\in \Omega^{p,q}(F_m(\widetilde{D}_{\log m})).
\end{align*}

On the space $F^*_m\Omega^{p,q}_0(\widetilde{D}_{\log m})$, we define the  weighted inner product  $\big(\cdot|\cdot\big)_{2mF^*_m\varphi}$    as follows:
\begin{align*}
\big(f|g\big)_{2mF^*_m\varphi}=\int_{\widetilde{D}_{\log m}}\langle f|g\rangle_{F^*_m}e^{-2mF^*_m\varphi}\lambda(\frac{z}{\sqrt{m}})dv(z).
\end{align*}
Let $\overline{\partial}^*_{(m)}:F^*_m\Omega^{p,q+1}(\widetilde{D}_{\log m})\rightarrow F^*_m\Omega^{p,q}(\widetilde{D}_{\log m})$ be the formal adjoint of $\overline{\partial}_{(m)}$ with respect to the weighted inner product $\big(\cdot|\cdot)_{2mF^*_m\varphi}$. Let $\overline{\partial}^{*,2m\varphi}:\Omega^{p,q+1}{\widetilde{D}}\rightarrow \Omega^{p,q}(\widetilde{D})$ be the formal adjoint of $\overline{\partial}$ with respect to the weighted inner product $\big(\cdot|\cdot\big)_{2m\varphi}$. Then we also have
\begin{align*}
\overline{\partial}^*_{(m)}F^*_mf=\frac{1}{\sqrt{m}}F^*_m(\overline{\partial}^{*,2m\varphi}f), ~\forall f\in \Omega^{p,q}(F_m(\widetilde{D}_{\log m})).
\end{align*}
We now define the scaled complex Laplacian $\Box^{(p,q)}_{(m)}:F^*_m\Omega^{p,q}(\widetilde{D}_{\log m})\rightarrow F^*_m\Omega^{p,q}(\widetilde{D}_{\log m})$ which is given by $\Box^{(p,q)}_{(m)}=\overline{\partial}^*_{(m)}\overline{\partial}_{(m)}+\overline{\partial}_{(m)}\overline{\partial}^*_{(m)}$. Then we can see that
\begin{align}\label{scaled Laplace}
\Box^{(p,q)}_{(m)}F^*_mf=\frac{1}{m}F^*_m(\Box^{(p,q)}_{2m\varphi}f),~\forall f\in \Omega^{p,q}(F_m(\widetilde{D}_{\log m})).
\end{align}
p
Here
\begin{align*}
\Box^{(p,q)}_{2m\varphi}=\overline{\partial}\overline{\partial}^{*,2m\varphi}+\overline{\partial}^{*,2m\varphi}\overline{\partial}:\Omega^{p,q}(\widetilde{D})\rightarrow \Omega^{p,q}(\widetilde{D})
\end{align*}
is the complex Laplacian with respect to the given Hermitian metric on $T^{*p,q}(\widetilde{D})$ and weight function $2m\varphi(z)$ on $\widetilde{D}$.

Since $2mF^*_m\varphi=2\Phi_0(z)+\frac{1}{\sqrt{m}}O(|z|^3)$, $\forall z\in \widetilde{D}_{\log m}$, where $\Phi_0(z)=\sum_{j=1}^{n-1}\lambda_j|z_j|^2$, we have
\begin{align*}
\lim_{m\rightarrow \infty}\sup_{\widetilde{D}_{\log m}}|\partial^\alpha_z(2mF^*_m\varphi-2\Phi_0)|=0,~\forall \alpha\in \mathbb{N}_0^{2n-2}.
\end{align*}

Consider $\mathbb{C}^{n-1}$. Let $\langle\cdot|\cdot\rangle_{\mathbb{C}^{n-1}}$ be the Hermitian metric with constant coefficients on $T^{*p,q}\mathbb{C}^{n-1}$,   such that at the origin, it is equal to $\omega(0)$. Let $\big(\cdot|\cdot\big)_{2\Phi_0}$ be the $L^2$ inner product on $\Omega_0^{p,q}(\mathbb{C}^{n-1})$ given by
\begin{align*}
\big( f|g\big)_{2\Phi_0}=\int_{\mathbb{C}^{n-1}}\langle f|g\rangle e^{-2\Phi_0(z)}\lambda(0)dv(z), f,g\in \Omega^{p,q}_0(\mathbb{C}^{n-1}),
\end{align*}
where $\lambda(0)$ is the value of the function $\lambda(z)$ given in Remark \ref{induced metric} at $x_0$.

Put
\begin{align}\label{model laplace}
\Box^{(p,q)}_{2\Phi_0}=\overline{\partial}\overline{\partial}^{*,2\Phi_0}+\overline{\partial}^{*,2\Phi_0}\overline{\partial}:\Omega^{p,q}(\mathbb{C}^{n-1})\rightarrow \Omega^{p,q}(\mathbb{C}^{n-1}),
\end{align}
where $\overline{\partial}^{*,2\Phi_0}$ is the formal adjoint of $\overline{\partial}$ with respect to $\big(\cdot|\cdot\big)_{2\Phi_0}$.

It is not difficult to check that
\begin{align}\label{asymptotic laplace}
\Box^{(p,q)}_{(m)}=\Box^{p,q}_{2\Phi_0}+\varepsilon_m\mathcal{P}_m
\end{align}
on $\widetilde{D}_{\log m}$, where $\mathcal{P}$ is a second order partial differential operator and all the coefficients of $\mathcal{P}_m$ are uniformly bounded with respect to $m$ in $\mathcal{C}^\mu(\widetilde{D}_{\log m})$ norm for every $\mu\in \mathbb{N}_0$ and $\varepsilon_m$ is a sequence ¡¡tending to zero as $m\rightarrow \infty$.

From G{\aa}rding's inequality together with Sobolev estimates for elliptic operator $\Box^{(p,q)}_{(m)}$, one can get the following
\begin{prop}[c.f. \cite{Ber02}]\label{pointwise}
Let  $u\in F^*_m\Omega^{p,q}(\widetilde{D}_{\log m})$. For every $r>0$ with $\widetilde{D}_r\subset\subset \widetilde{D}_{\log m}$, and every $k\in \mathbb{N}^+$ and $k>\frac{n-1}{2}$, there is a constant $C_{r,k}$ independent of $m$ such that
\begin{align*}
|u(0)|^2\leq C_{r,k}\Big(\|u\|^2_{2mF^*_m\varphi,\widetilde{D}_r}+\|(\Box^{(p,q)}_{(m)})^ku\|_{2mF^*_m\varphi,\widetilde{D}_r}\Big).
\end{align*}
\end{prop}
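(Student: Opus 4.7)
The plan is to treat the estimate as a standard interior elliptic regularity + Sobolev embedding argument for the uniformly elliptic operator $\Box^{(p,q)}_{(m)}$, with all constants tracked to be independent of $m$. The uniformity is the only genuine issue, since pointwise control by a Sobolev norm is classical.

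First I would fix an intermediate radius $r' \in (0,r)$ so that $\widetilde{D}_{r'} \subset\subset \widetilde{D}_r \subset\subset \widetilde{D}_{\log m}$ for all $m$ large. By the asymptotic expansion (\ref{asymptotic laplace}), the coefficients of $\Box^{(p,q)}_{(m)}$ converge in every $C^\mu(\widetilde{D}_r)$ norm to those of $\Box^{(p,q)}_{2\Phi_0}$, and the principal symbol of $\Box^{(p,q)}_{2\Phi_0}$ is the constant-coefficient (scalar) Laplacian on $\mathbb{C}^{n-1}$ acting on each component in the frame $\{e^I(z/\sqrt{m})\wedge \overline{e^J}(z/\sqrt{m})\}$. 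Hence $\Box^{(p,q)}_{(m)}$ is uniformly elliptic on $\widetilde{D}_r$ for $m$ large. Similarly, since $2mF_m^*\varphi \to 2\Phi_0$ in $C^\mu$, the weight $e^{-2mF_m^*\varphi}$ is bounded above and below by constants independent of $m$ on $\widetilde{D}_r$, so the weighted and unweighted $L^2$ norms on $\widetilde{D}_r$ are equivalent with constants independent of $m$.

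Next I would apply G{\aa}rding's interior elliptic estimate iteratively. For the uniformly elliptic second-order operator $\Box^{(p,q)}_{(m)}$ acting componentwise, standard interior Schauder/Sobolev estimates give a constant $C$ independent of $m$ such that
\begin{align*}
\|u\|_{H^{s+2}(\widetilde{D}_{r_1})} \leq C\bigl(\|\Box^{(p,q)}_{(m)} u\|_{H^s(\widetilde{D}_{r_2})} + \|u\|_{L^2(\widetilde{D}_{r_2})}\bigr)
\end{align*}
for any $r_1 < r_2 \leq r$. Iterating $k$ times with a nested sequence of radii between $r'$ and $r$ yields
\begin{align*}
\|u\|_{H^{2k}(\widetilde{D}_{r'})} \leq C_{r,k}\bigl(\|(\Box^{(p,q)}_{(m)})^k u\|_{L^2(\widetilde{D}_r)} + \|u\|_{L^2(\widetilde{D}_r)}\bigr),
\end{align*}
with $C_{r,k}$ independent of $m$. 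The Sobolev embedding $H^{2k}(\widetilde{D}_{r'}) \hookrightarrow C^0(\overline{\widetilde{D}_{r'/2}})$ is valid because $2k > n-1 = \tfrac{1}{2}\dim_{\mathbb{R}}\widetilde{D}$, and then evaluation at the origin gives $|u(0)|^2 \leq C\|u\|^2_{H^{2k}(\widetilde{D}_{r'})}$. Converting back to weighted norms using the uniform equivalence noted above finishes the proof.

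The main obstacle, and the only point requiring real care, is uniformity of the G{\aa}rding constant in $m$; this is where (\ref{asymptotic laplace}) is essential, since it identifies $\Box^{(p,q)}_{(m)}$ as a small perturbation of the fixed elliptic model operator $\Box^{(p,q)}_{2\Phi_0}$. Once uniform ellipticity and uniform bounds on coefficients in $C^\mu(\widetilde{D}_r)$ are secured, the classical elliptic regularity machinery applies verbatim and produces $m$-independent constants, so the pointwise bound follows without additional work.
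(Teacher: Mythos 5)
Your proposal is correct and is exactly the argument the paper intends: the paper gives no proof beyond the one-line attribution to G{\aa}rding's inequality, Sobolev estimates, and \cite{Ber02}, and your elaboration (uniform ellipticity of $\Box^{(p,q)}_{(m)}$ via the expansion (\ref{asymptotic laplace}), uniform equivalence of the weighted and unweighted norms on a fixed ball, iterated interior estimates, then Sobolev embedding using $2k>n-1$) supplies the standard details with the $m$-uniformity correctly traced to the convergence of coefficients. The only elided point is that the $k$-fold iteration produces intermediate terms $\|(\Box^{(p,q)}_{(m)})^j u\|_{L^2}$ for $0<j<k$, which must be absorbed via an Ehrling/interpolation inequality on nested balls before arriving at the stated two-term bound; this is routine and does not affect the validity of the argument.
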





\section{Proof of the Theorem \ref{main theorem}}\label{proof}
Now, after a long way of preparing, we are on the way to prove Theorem \ref{main theorem}, in the following  three steps.


\textbf{Step 1.} Fix a point $x_0\in X$. From Lemma \ref{local coordinate} and Remark \ref{induced metric},  up to a coordinate transformation, we can choose a canonical local patch  $D=\widetilde{D}\times (-\delta,\delta)=\{(z,\theta):|z|<\varepsilon, |\theta|<\delta\}$ with canonical coordinates $(z,\theta,\varphi)$ such that $(z,\theta,\varphi)$ is trivial at $x_0$ and the metric  $\omega$  induced by the $T$-rigid Hermitian metric on $X$ be the Hermitian metric satisfies $\omega=\frac{i}{2}\partial\overline{\partial}|z|^2=:\beta$ at $x_0$. Let $u\in \mathcal{H}^{n-1,q}_{b,m,\leq \lambda}(X)$ such that $\|u\|=1$ and $\overline{\partial}_b u=0$. Set $\widetilde{u}=e^{m\varphi}e^{-im\theta}u$ on $\widetilde{D}$, then from Lemma \ref{debarb-debar} and Lemma \ref{eigenvalue comparison}, we know that $\widetilde{u}\in \mathcal{H}^{n-1,q}_{2m\varphi,\leq \lambda}(\widetilde{D})$ and $\overline{\partial}\widetilde{u}=0$. By the definition and Lemma \ref{debarb-debar}, it is easy to show that
\begin{align}
|u|^2&=|\widetilde{u}|^2e^{-2m\varphi},\label{norm relation 1}\\
 |\Box^{(n-1,q)}_{b,m}u|^2&=|\Box^{(n-1,q)}_{2m\varphi}\widetilde{u}|^2e^{-2m\varphi}.\label{norm relation 2}\end{align}

We construct a trivial holomorphic Hermitian line bundle $(L:=\widetilde{D}\times \mathbb{C}, h:=e^{-2m\varphi})$ over $\widetilde{D}$. From (\ref{norm relation 1}) and (\ref{norm relation 2}), one can identify $\widetilde{u}$ with an $L$-valued $(n-1,q)$ form on $\widetilde{D}$, i.e. a section of the bundle $\Omega^{n-1,q}\otimes L$ over $\widetilde{D}$, and $\Box^{(n-1,q)}_{2m\varphi}$ with the formal $\overline{\partial}$-Laplacian operator on $\widetilde{D}$ with respect to the induced  Hermitian metric $\omega$ (see Remark \ref{induced metric}) and the Hermitian metric $h$ of $L$ on $\widetilde{D}$. For this consideration,  we make the following notations  throughout this section
\begin{align}
\big[\widetilde{u}\big]^2&:=|\widetilde{u}|^2e^{-2m\varphi}\label{new notation 1}\\
\big[\Box^{(n-1,q)}_{2m\varphi}\widetilde{u}\big]^2&:=|\Box^{(n-1,q)}_{2m\varphi}\widetilde{u}|^2e^{-2m\varphi}.\label{new notation 2}
\end{align}

Now it is the right time for us to  introduce the strategy of Berndtsson \cite{Ber02}.  Since $X$ is  pseudoconvex, then from Proposition
\ref{levi form} we have that   $\Theta_L=i\partial\overline{\partial}\varphi\geq 0$

From Lemma \ref{Siu formula 1}, we get that
\begin{align}\label{ME 1}
i\partial\overline{\partial}(T_{\widetilde{u}}\wedge \omega_{q-1})\geq (-2Re\langle \Box_{2m\varphi}^{n-1,q}\widetilde{u}, \widetilde{u}\rangle-c\big[\widetilde{u}\big]^2\rangle)\omega_{n-1}.
\end{align}

For $r>0$ small, we define
\begin{align*}
\sigma(r)&:=\int_{|z|<r}\big[\widetilde{u}\big]^2\omega_{n-1}=\int_{|z|<r}T_{\widetilde{u}}\wedge\omega_q=:s^2(r)
,\\
\lambda(r)&:=\Big(\int_{|z|<r}\big[\Box^{n-1,q}_{2m\varphi}\widetilde{u}\big]^2\Big)^{1/2}.
\end{align*}

From Cauchy's inequality, we get that
\begin{align*}
\int_{|z|<r}\big[\Box^{n-1,q}_{2m\varphi}\widetilde{u}\big]\big[\widetilde{u}\big]\leq \lambda(r)\sigma(r)^{1/2}.
\end{align*}

Without loss of generality, we may assume that $\lambda\geq 1$.

From (\ref{ME 1}) we see that
\begin{align}\label{ME 2}
&\int_{|z|<r}(r^2-|z|^2)i\partial\overline{\partial}(T_{\widetilde{u}}\wedge\omega_{q-1})\\
&\geq -cr^2\sigma(r)-2r^2\int_{|z|<r}\big[\Box^{n-1,q}_{2m\varphi}\widetilde{u}\big]\big[\widetilde{u}\big]\omega_{n-1}.\notag
\end{align}

Applying Stokes' formula to the left hand side of (\ref{ME 2}), we get that
\begin{align}\label{ME 3}
2&\int_{|z|<r}iT_{\widetilde{u}}\wedge \omega_{q-1}\wedge \beta\\
&\leq \int_{|z|=r} iT_{\widetilde{u}}\wedge \omega_{q-1}\wedge \partial|z|^2+cr^2\sigma(r)+2r^2\sigma(r)^{1/2}\lambda(r).\notag
\end{align}

Since   $\omega$ is smooth and $\omega(0)=\beta$, up to shrinking the local patch if necessary,  we have that
\begin{align}\label{beta and omega}
(1-O(r))\omega\leq \beta\leq (1-O(r))\omega.
\end{align}

Note that  if $\omega=\beta$, the boundary term in (\ref{ME 3}) can be estimated by an integral with respect to surface measure
\begin{align*}
r\int_{|z|=r}[\widetilde{u}]^2dS,
\end{align*}
and this implies that in our case
\begin{align}\label{boundary estimate}
\int_{|z|=r}iT_{\widetilde{u}}\wedge\omega_{q-1}\wedge\partial|z|^2\leq r(1-O(r))\int_{|z|=r}\big[\widetilde{u}\big]^2(\omega_{n-1}/\beta_{n-1})dS.
\end{align}

However,
\begin{align}\label{differential}
\int_{|z|=r}\big[\widetilde{u}\big]^2(\omega_{n-1}/\beta_{n-1})dS=\sigma'(r).
\end{align}

From (\ref{ME 3}), (\ref{beta and omega}) and (\ref{differential}),  we get that
\begin{align}\label{quasi inequality}
2q(1-O(r))\sigma(r)\leq r\sigma'(r)+2r^2\sigma(r)^{1/2}\lambda(r),
\end{align}
by incorporating  the term $cr^2\sigma(r)$ in $O(r)\sigma(r)$.

Dividing  by $2rs(r)$ to both sides of (\ref{quasi inequality}), we  obtain
\begin{align}\label{ME 4}
q(1/r-O(1))s(r)\leq s'(r)+r\lambda(r).
\end{align}

We are going to prove
\begin{align*}
s(r)\leq C r^k\lambda^{k/2}
\end{align*}
for $k\leq q$ by induction over $k$.

The statement is trivial for $k=0$. In fact, from (\ref{norm relation 1}) and (\ref{new notation 1}), we have that
\begin{align*}
\sigma(r)=\int_{|z|<r}\big[\widetilde{u}\big]^2\omega_{n-1}=\frac{1}{2\delta}\int_{|z|<r,-\delta\leq \theta\leq \delta}|u|^2dv_X\leq \frac{1}{2\delta},
\end{align*}
since we have assumed that $\|u\|=1$.

Now we assume that it has been proved for a certain value of $k<q$. Then (\ref{ME 4}) implies
\begin{align}\label{induction inequality}
(k+1)(1/r-O(1))s(r)\leq s'(r)+r\lambda(r).
\end{align}

Since $\widetilde{u}\in \mathcal{H}^{n-1,q}_{2m\varphi,\leq \lambda}(\widetilde{D})$, the form $\Box^{n-1,q}_{2m\varphi}\widetilde{u}$ also lies in  $\mathcal{H}^{n-1,q}_{2m\varphi,\leq \lambda}(\widetilde{D})$, then by the induction hypothesis we get that
\begin{align}\label{estimate lambda r}
\lambda(r)\leq Cr^k\lambda^{k/2+1}.
\end{align}

From (\ref{induction inequality}) and (\ref{estimate lambda r}), we obtain that
\begin{align}\label{induction inequality 2}
(k+1)(1/r-O(1))s(r)\leq s'(r)+Cr^{k+1}\lambda^{k/2+1}.
\end{align}

Set
\begin{align*}
\Phi(r)=(k+1)\int (1/r-O(1))dr\sim (k+1)\log r
\end{align*}
and multiply (\ref{induction inequality 2}) by the integrating factor $e^{-\Phi(r)}$. The result is that
\begin{align*}
(se^{-\Phi})'\geq -C\lambda^{k/2+1}.
\end{align*}

Integrate this inequality from $r$ to $\lambda^{-1/2}$. Since $e^{-\Phi}\sim 1/r^{k+1}$, we get that
\begin{align*}
r^{-(k+1)}s(r)\leq C \lambda^{k/2+1/2}+ s( \lambda^{-1/2})\lambda^{k/2+1/2}\leq C \lambda^{k/2+1/2}.
\end{align*}

By induction, we obtain that
\begin{align*}
s(r)\leq C r^q\lambda^{q/2}.
\end{align*}

After squaring both sides, we obtain that
\begin{align}\label{integral estimate}
\int_{|z|<r}\big[\widetilde{u}\big]^2\omega_{n-1}\leq C r^{2q}\lambda^q.
\end{align}

Go through the proof given above line by line, one can see that the constant $C$ only depends on the local coordinate, $c$ in Siu's formula (which depends only on the metric $\omega$), $O(1)$ and $\delta$, but from the compactness of $X$, one can get a uniform constant $C$ independent of $r$, $m$, $\lambda$ and the point $x_0$.


\textbf{Step 2.}   In the sequel, we shall use the scaling technique in Section \ref{scaling}.

For any form $u\in \Omega^{n-1,q}_{b,m}(X)$, we express $u$ in terms of the trivialization and local canonical coordinates on $D$ and write  $\widetilde{u}=e^{m\varphi}e^{-im\theta}u$ on $\widetilde{D}$ as before. Firstly we assume that $\lambda\leq m$. Put
\begin{align*}
\widetilde{u}^{(m)}(z)=F^*_m\widetilde{u}(z)=\widetilde{u}(\frac{z}{\sqrt{m}}),
\end{align*}
so that $\widetilde{u}^{(m)}$ is defined for $|z|<1$ if $m$ is large enough.

 We also have the scaled Laplacian $\Box^{(n-1,q)}_{(m)}$, and from (\ref{scaled Laplace}), it   satisfies
\begin{align*}
m\Box^{(n-1,q)}_{(m)}\widetilde{u}^{(m)}=F^*_m(\Box^{(n-1,q)}_{2m\varphi}\widetilde{u})=:(\Box^{(n-1,q)}_{2m\varphi}\widetilde{u})^{(m)}.
\end{align*}

From (\ref{asymptotic laplace}), $\Box^{(n-1,q)}_{(m)}$ converges to a $m$-independent elliptic operator as $m\rightarrow \infty$ on a neighborhood of  $|z|\leq1$.

Therefore, from Propositon \ref{pointwise}, we obtain that
\begin{align}\label{pointwise 1}
&|u(0)|^2=[\widetilde{u}](0)\\
&\leq C_{k}\Big(\int_{|z|<1}\big[\widetilde{u}^{(m)}\big]^2\omega^{(m)}_{n-1}+\int_{|z|<1}\big[\Box^{(n-1,q)}_{(m)})^k\widetilde{u}^{(m)}\big]^2\omega^{(m)}_{n-1}\Big),\notag
\end{align}
for $m$ sufficiently large and  $k>\frac{n-1}{2}$, where $C_{r,k}$ in Proposition \ref{pointwise} depends on $r$ and $k$, but here $C_{r,k}=C_{1,k}=:C_k$ only depends on $k$ since  $r=1$ in (\ref{pointwise 1}).

By coordinate transformation formula, we have that
\begin{align*}
\int_{|z|<1}\big[\widetilde{u}^{(m)}\big]^2\omega^{(m)}_{n-1}=m^{n-1}\int_{|z|<\frac{1}{\sqrt{m}}}\big[\widetilde{u}\big]^2\omega_{n-1},
\end{align*}
and
\begin{align*}
\int_{|z|<1}\big[\Box^{(n-1,q)}_{(m)})^k\widetilde{u}^{(m)}\big]^2\omega^{(m)}_{n-1}=m^{n-1-2k}\int_{|z|<\frac{1}{\sqrt{m}}}\big[(\Box^{n-1,q}_{2m\varphi})^k\widetilde{u}\big]^2\omega_{n-1}.
\end{align*}

From (\ref{integral estimate}) in Step 1,  we get that
\begin{align}\label{ME 5}
m^{n-1}\int_{|z|<\frac{1}{\sqrt{m}}}\big[\widetilde{u}\big]^2\omega_{n-1}\leq Cm^{n-1-q}(\lambda+1)^q,
\end{align}
and
\begin{align}\label{ME 6}
m^{n-1-2k}\int_{|z|<\frac{1}{\sqrt{m}}}\big[(\Box^{n-1,q}_{2m\varphi})^k\widetilde{u}\big]^2\omega_{n-1}\leq Cm^{n-1-q}(\lambda+1)^q(\lambda/m)^{2k}.
\end{align}

Combining (\ref{pointwise 1}), (\ref{ME 5}) and (\ref{ME 6}), we obtain that
\begin{align*}
|u(0)|^2\leq Cm^{n-1-q}(\lambda+1)^q.
\end{align*}

Secondly, if $\lambda\geq m$, we apply the above procedure to the scaling $\widetilde{u}^{(\lambda)}$ instead, and trivially get
\begin{align*}
|u(0)|^2\leq C\lambda^{n-1}.
\end{align*}


\textbf{Step 3.}  
 Since  $\overline{\partial}_b$ commutes with $\Box^{(p,q)}_{b,m}$, we have  the following exact sequence
\begin{align*}
0\rightarrow \mathscr{Z}^{n-1,q}_{b,m,\leq \lambda}\xrightarrow{inclusion} \mathcal{H}^{n-1,q}_{b,m,\leq\lambda}\xrightarrow{\overline{\partial}_b} \mathscr{Z}^{n-1,q+1}_{b,m,\leq \lambda}.
\end{align*}

Thus we obtain that
\begin{align}\label{dimension inequality}
\dim \mathcal{H}^{n-1,q}_{b,m,\leq\lambda}\leq \dim  \mathscr{Z}^{n-1,q}_{b,m,\leq \lambda}+\dim \mathscr{Z}^{n-1,q+1}_{b,m,\leq \lambda}.
\end{align}

From Lemma \ref{Szego kernel}, we see that, for any $y\in X$
\begin{align}\label{dimension integration 1}
\dim  \mathscr{Z}^{n-1,q}_{b,m,\leq \lambda}&\leq\binom{n-1}{p}\binom{n-1}{q}\int_X S^{n-1,q}_{m,\leq \lambda}(y)dv_X\\
&\leq Cm^{n-1-q}(\lambda+1)^q\notag
\end{align}
 with  $\lambda\leq m$, and
\begin{align}\label{dimension integration 2}
\dim  \mathscr{Z}^{n-1,q}_{b,m,\leq \lambda}\leq\binom{n-1}{p}\binom{n-1}{q}\int_X S^{n-1,q}_{m,\leq \lambda}(y)dv_X\leq C\lambda^{n-1}
\end{align}
with $\lambda\geq m$.

From  (\ref{dimension inequality}), (\ref{dimension integration 1}) and (\ref{dimension integration 2}), we obtain that for $\lambda\leq m$
\begin{align*}
\dim \mathcal{H}^{n-1,q}_{b,m,\leq\lambda}&\leq C\Big(m^{n-1-q}(\lambda+1)^q+m^{n-2-q}(\lambda+1)^{q+1}\Big)\\
&\leq Cm^{n-1-q}(\lambda+1)^q,
\end{align*}
and for $\lambda\geq m$,
\begin{align*}
\dim \mathcal{H}^{n-1,q}_{b,m,\leq\lambda}\leq C\lambda^n.
\end{align*}

In conclusion, we complete the proof of the  Theorem \ref{main theorem}.

\begin{rmk}
In  \cite{Ber02}, Berndtsson constructed  for any $0\leq q\leq n$,  a compact K\"{a}hler manifold $M$ of complex dimension $n$ and a semi-positive line bundle over $M$ such that for large $k$,
\begin{align*}
\dim \mathcal{H}^{n,q}_{\leq \lambda}(L^k)\geq C(\lambda+1)^qk^{n-q}.
\end{align*}
Let $X$ be the circle bundle $X:=\{v\in L^*:|v|^2_{h^{-1}}=1\}$ over $M$ which is of real dimension $2n+1$.
Then from the Theory of Grauert tube in Section \ref{Grauert tube}, one can see that  for large $m$,
\begin{align*}
\dim \mathcal{H}^{n,q}_{b,m,\leq \lambda} \geq C(\lambda+1)^q m^{n-q}.
\end{align*}
Thus we can see that in Theorem \ref{main theorem}, the growth order is sharp.
\end{rmk}

\section{Serre type duality theorem}\label{Serre}

Let $X$ be a compact $CR$-manifold of real dimension $2n-1$, which admits a transversal CR $S^1$-action. Let $u\in \mathcal{H}^{p,q}_m(X)$, which means that $\Box^{(p,q)}_{b,m}u=0$, which is equivalent to the fact that $\overline{\partial}_bu=0$ and $\overline{\partial}^*_bu=0$ from the Hodge theory of $\Box^{p,q}_{b,m}$. We  define the Hodge-$*$ operator in the CR level  by the following
\begin{align}\label{definition of *}
\langle u|v\rangle dv_X=u\wedge*\overline{v}\wedge\omega_0,
\end{align}
where $u,v\in \Omega^{p,q}_m(X)$, $dv_X$ is the volume form on $X$ defined in Lemma \ref{local section} , and $\omega_0$ is the global $1$-form associated to the action of $S^1$.

\begin{prop}\label{*formula}The Hodge-$*$ operator is a complex linear operator
\begin{align*}
*:\Omega^{p,q}_m(X)\rightarrow \Omega^{n-1-q,n-1-p}_m(X).
\end{align*}
\end{prop}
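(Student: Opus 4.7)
The plan is to verify three assertions implicit in the proposition — the bidegree shift $(p,q)\mapsto(n-1-q,n-1-p)$, preservation of the Fourier index $m$, and $\mathbb{C}$-linearity — all by a direct pointwise computation. Fix $x_0\in X$ and work in a canonical patch $D=\widetilde D\times(-\delta,\delta)$ from Theorem \ref{local coordinate}, together with the orthonormal frame $\{e^j\}_{j=1}^{n-1}$ of $T^{*1,0}$ from Lemma \ref{local section}. The crucial local input is that $e^j(z)$ is $\theta$-independent by construction, so $Te^j=T\overline{e^j}=0$; moreover $\omega_0$ is $S^1$-invariant because the $S^1$-action preserves $T^{1,0}X$, $T^{0,1}X$, and $T$, and $dv_X=\lambda(z)\,dv(z)\,d\theta$ from Lemma \ref{local section} is $S^1$-invariant as well.

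On $D$ the coframe $\{e^j,\overline{e^j}\}_{j=1}^{n-1}\cup\{\omega_0\}$ trivializes $T^*X$, and $dv_X$ is a positive scalar multiple of $e^1\wedge\overline{e^1}\wedge\cdots\wedge e^{n-1}\wedge\overline{e^{n-1}}\wedge\omega_0$. Expanding $v\in\Omega^{p,q}_m(D)$ in the frame as $v=\sum'_{|I|=p,|J|=q}f_{IJ}\,e^I\wedge\overline{e^J}$ and substituting into the defining equation (\ref{definition of *}) with $u$ running over the corresponding basis monomials, one solves for $*\overline{v}$ pointwise as an explicit $\mathbb{C}$-linear combination of monomials $e^{I'}\wedge\overline{e^{J'}}$ with $|I'|=n-1-p$ and $|J'|=n-1-q$; this is exactly the standard Hodge-dual prescription applied to the horizontal bundle $T^{1,0}X\oplus T^{0,1}X$. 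This establishes the bidegree claim, and $\mathbb{C}$-linearity is automatic from the formula, which is manifestly $\mathbb{C}$-linear in the coefficients of the input form expressed in the orthonormal frame.

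For the Fourier mode, note that $\overline{v}$ has coefficients of the form $e^{-im\theta}\overline{g_{IJ}(z)}$ in the frame $\{e^I\wedge\overline{e^J}\}$, while the frame forms and the combinatorial scalars produced by $*$ are $\theta$-independent; hence $*\overline{v}$ retains the overall factor $e^{-im\theta}$ and lies in Fourier mode $-m$, so $*:\Omega^{q,p}_{-m}(D)\to\Omega^{n-1-p,n-1-q}_{-m}(D)$. Relabeling indices ($p\leftrightarrow q$ and $m\leftrightarrow -m$), this is exactly $*:\Omega^{p,q}_m(X)\to\Omega^{n-1-q,n-1-p}_m(X)$. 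The only mild subtlety, and the main bookkeeping obstacle, is keeping track of how the conjugation in (\ref{definition of *}) simultaneously flips the bidegree and the Fourier index; once this is absorbed consistently, the proposition follows without any genuine analytic difficulty.
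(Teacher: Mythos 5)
Your proposal is correct and follows essentially the same route as the paper: both derive an explicit local (Kodaira-style) formula for $*\overline{v}$ from the defining identity $\langle u|v\rangle\,dv_X=u\wedge *\overline{v}\wedge\omega_0$, read off the bidegree shift and $\mathbb{C}$-linearity from that formula, and deduce preservation of the Fourier index from the $\theta$-independence of the $T$-rigid frame and metric coefficients, so that $T(*v)=im(*v)$. The only cosmetic difference is that you work in the orthonormal frame $\{e^j\}$ of Lemma \ref{local section} while the paper uses the coordinate coframe $dz^j$ with the metric matrix $(g_{i\overline{j}})$ and explicit sign bookkeeping.
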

\begin{proof} For the proof, we follow the counterpart for complex manifold case in \cite{Kod}. Suppose that $u, v\in \Omega^{p,q}_m(X)$.
 For simplicity we denote as follows
\begin{align*}
A_p=\alpha_1& \cdots \alpha_p,~~B_q=\beta_1\cdots\beta_q~~~~\mbox{with}~~~~\alpha_1<\cdots<\alpha_p,~~\beta_1<\cdots<\beta_q
\\
&dz^{A_p}=dz^{\alpha_1}\wedge\cdots\wedge dz^{\alpha_p},~~dz^{B_q}=dz^{\beta_1}\wedge\cdots\wedge dz^{\beta_q}.
\end{align*}
Moreover, for $A_p=\alpha_1\cdots\alpha_p$, we put
\begin{align*}
A_{n-1-p}=\alpha_{p+1}\cdots\alpha_{n-1},
\end{align*}
where $\alpha_{p+1}<\cdots<\alpha_n$ and $\{\alpha_1,\cdots,\alpha_p,\alpha_{p+1},\cdots,\alpha_{n-1}\}$ is a permutation of $\{1,\cdots,n-1\}$. Similarly we define $B_{n-1-q}$ for a given $B_q=\beta_1\cdots\beta_q$.

Then with this notation, we can write $u, v$ in local coordinates by  \begin{align*}
&u=\mathop{{\sum}'}_{A_p,\overline{B}_q}u_{A_p\overline{B}_qJ}(z,\theta)dz^{A_p}\wedge d\overline{z^{B_q}}\\
&v=\mathop{{\sum}'}_{A_p,\overline{B}_q}v_{A_p\overline{B}_qJ}(z,\theta)dz^{A_p}\wedge d\overline{z^{B_q}},\end{align*}
where $dz^{A_p}$ and $d\overline{z^{B_q}}$ are $T$-invariant forms.

Set
\begin{align*}
\mbox{sgn}\left(
            \begin{array}{cc}
              A_p & A_{n-1-p} \\
              B_q & B_{n-1-q} \\
            \end{array}
          \right)=\mbox{sgn}\left(
                              \begin{array}{cccccc}
                                \alpha_1 & \cdots & \alpha_p & \alpha_{p+1} & \cdots & \alpha_{n-1} \\
                                \beta_1 & \cdots & \beta_q & \beta_{q+1} & \cdots & \beta_{n-1} \\
                              \end{array}
                            \right),
\end{align*}

From the definition of $(\cdot|\cdot)$ and similar computations in \cite{Kod}, one can compute that $*\overline{v}$ can be represented by
\begin{align}\label{*barv}
*\overline{v}=&(i)^{n-1}(-1)^k\sum_{A_p,B_q}\mbox{sgn}\left(
            \begin{array}{cc}
              A_p & A_{n-1-p} \\
              B_q & B_{n-1-q} \\
            \end{array}
          \right)\\
        &  \cdot g(z)\overline{v}^{\overline{B}_q A_p}(z,\theta)dz^{A_{n-1-p}}\wedge d\overline{z^{B_{n-1-q}}},\notag
\end{align}
 with $k={(n-1)(n-2)/2+(n-1)q}$, $g(z)=\det(g_{i,\overline{j}}(z))$  and
 \begin{align*}
 \overline{v}^{\overline{\beta}_1\cdots\overline{\beta}_q\alpha_1\cdots\alpha_p}(z,\theta)=\sum g^{\overline{\beta}_1\mu_1}\cdots g^{\overline{\beta}_q\mu_q}g^{\overline{\lambda}_1\alpha_1}\cdots g^{\overline{\lambda}_p\alpha_p}\overline{v}_{\mu_1\cdots\mu_q\overline{\lambda}_1\cdots\overline{\lambda_p}}(z,\theta).
 \end{align*}

 Replacing $\overline{v}$ by $v$ and interchanging $A$ with $B$, we obtain
 \begin{align}\label{*v}
 *v=&i^n(-1)^{k'}\sum_{A_p,B_q}\mbox{sgn}\left(
            \begin{array}{cc}
              A_p & A_{n-1-p} \\
              B_q & B_{n-1-q} \\
            \end{array}
          \right)\\
         & \cdot g(z)v^{\overline{A}_p B_q}(z,\theta)dz^{B_{n-1-q}}\wedge d\overline{z^{A_{n-1-p}}},\notag
 \end{align}
with $k'=(n-1)(n-2)/2+(n-1)p$, and

\begin{align*}
v^{\overline{A}_pB_q}(z,\theta)=\sum g^{\overline{\alpha}_1\lambda_1}\cdots g^{\overline{\alpha}_p\lambda_p}g^{\overline{\mu}_1\beta_1}\cdots g^{\overline{\mu}_q\beta_q}v_{\lambda_1\cdots\lambda_p\overline{\mu}_1\cdots\overline{\mu}_q}(z,\theta).
\end{align*}

It is easy to see that the map $v\rightarrow *v$ is linear, namely we have
\begin{align*}
*(c_1u+c_2v)=c_1*u+c_2*v
\end{align*}
for $c_1, c_2\in \mathbb{C}$.

From (\ref{*v}) , it is easy to see that 
\[T(*v)=im(*v), \mbox{ i.e. } *v\in \Omega^{n-1-q,n-1-p}_m(X).\]

\end{proof}

\begin{prop}\label{*bar}
We have $\overline{*v}=*\overline{v}$.
\end{prop}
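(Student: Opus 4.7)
The plan is to deduce $\overline{*v}=*\overline{v}$ directly from the defining identity (\ref{definition of *}) rather than from the explicit local formulas (\ref{*v}) and (\ref{*barv}). The key inputs are the Hermitian symmetry of $\langle\cdot|\cdot\rangle$, the fact from Lemma \ref{Hermitian metric} that the $T$-rigid Hermitian metric is real on real tangent vectors, and that both the volume form $dv_X$ and the global $1$-form $\omega_0$ are real.

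First, I start from $\langle u|v\rangle\, dv_X = u\wedge *\overline{v}\wedge \omega_0$ for arbitrary $u,v\in \Omega^{p,q}_m(X)$ and take the complex conjugate of both sides. Since $dv_X$ and $\omega_0$ are real, this becomes $\overline{\langle u|v\rangle}\, dv_X = \overline{u}\wedge \overline{*\overline{v}}\wedge \omega_0$, and Hermitian symmetry rewrites the left-hand side as $\langle v|u\rangle\, dv_X$.

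Next, I apply the defining identity (\ref{definition of *}) to the conjugate pair $\overline{u},\overline{v}\in \Omega^{q,p}_{-m}(X)$, which yields
$$\langle \overline{u}|\overline{v}\rangle\, dv_X \;=\; \overline{u}\wedge *\,\overline{\overline{v}}\wedge \omega_0 \;=\; \overline{u}\wedge *v\wedge \omega_0.$$
The reality of the metric on real tangent vectors (Lemma \ref{Hermitian metric}), propagated through the induced metrics on $T^{*p,q}X$, gives the key compatibility $\langle \overline{u}|\overline{v}\rangle = \overline{\langle u|v\rangle} = \langle v|u\rangle$, which matches the left-hand side obtained in the first step. Comparing the two identities I get
$$\overline{u}\wedge \bigl(\overline{*\overline{v}}-*v\bigr)\wedge \omega_0 \;=\; 0 \qquad \text{for all } u\in \Omega^{p,q}_m(X).$$
Since the pointwise wedge pairing between forms of bidegree $(q,p)$ and $(n-1-q,n-1-p)$ on $X$, augmented by the nowhere-vanishing $1$-form $\omega_0$, is non-degenerate at every point, one concludes $\overline{*\overline{v}}=*v$; substituting $\overline{v}$ for $v$ and using $\overline{\overline{v}}=v$ yields the desired identity $\overline{*v}=*\overline{v}$.

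The main obstacle is justifying the compatibility $\langle \overline{u}|\overline{v}\rangle=\overline{\langle u|v\rangle}$ on $(p,q)$-forms, which is a routine but necessary consequence of the reality clause in Lemma \ref{Hermitian metric} together with the standard extension of a real inner product on $TX$ to a Hermitian metric on $\mathbb{C}TX$ and its induced Hermitian metrics on $T^{*p,q}X$. An alternative brute-force route would be to conjugate the local expression (\ref{*v}) for $*v$, reorder the differentials (producing signs $(-1)^{(n-1-q)(n-1-p)}$ and $(-1)^{pq}$), and verify that the resulting coefficient matches (\ref{*barv}); this works but entails tedious sign bookkeeping, which the invariant argument above avoids.
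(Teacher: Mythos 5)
Your argument is correct, but it takes a genuinely different route from the paper. The paper proves the identity by brute force: it fixes a point $x_0$, chooses canonical coordinates with $g_{\alpha\overline\beta}(x_0)=\delta_{\alpha\beta}$, conjugates the explicit local expression (\ref{*v}), reorders the differentials, and checks that the accumulated sign $(-1)^{n-1+(n-1)p+(n-1-q)(n-1-p)+(n-1-p)q}$ equals $+1$ — exactly the ``tedious sign bookkeeping'' you describe as the alternative. Your invariant argument instead conjugates the defining identity (\ref{definition of *}), applies it a second time to the pair $\overline u,\overline v$, and invokes non-degeneracy of the wedge pairing; this cleanly trades the sign computation for two standard facts, namely the compatibility $\langle\overline u|\overline v\rangle=\overline{\langle u|v\rangle}$ (which does follow from the reality clause in Lemma \ref{Hermitian metric} and the standard construction of the induced metrics on $T^{*p,q}X$, as you say) and the non-degeneracy of $(\alpha,\beta)\mapsto\alpha\wedge\beta\wedge\omega_0$ on $T^{*q,p}_xX\times T^{*n-1-q,n-1-p}_xX$, which holds because $\omega_0$ complements $T^{*1,0}X\oplus T^{*0,1}X$ in $\mathbb{C}T^*X$. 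One small point of hygiene: to let $\overline u$ range over the full fiber $T^{*q,p}_xX$ you should run the argument pointwise (the identity (\ref{definition of *}) is an equality of $(2n-1)$-forms, hence fiberwise linear algebra), rather than quantifying only over global sections in $\Omega^{p,q}_m(X)$, whose pointwise values need not exhaust the fiber at non-regular points of the $S^1$-action. With that reading, your proof is complete and arguably more transparent than the paper's; what the paper's computation buys in exchange is that it reuses the explicit formulas (\ref{*barv}) and (\ref{*v}) already derived for Proposition \ref{*formula}, so no additional lemmas about the metric are needed.
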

\begin{proof}
It suffices to check this at arbitrarily  fixed point $x_0\in X$. Take canonical local coordinates $(z,\theta)$ so that $(z(x_0), \theta(x_0))=(0,0)$ and $g_{\alpha\overline{\beta}}(x_0)=\delta_{\alpha\beta}$. Then $g^{\overline{\alpha}\beta}(x_0)=\delta_{\alpha\beta}$, hence we have that $\overline{v}^{A_p\overline{B}_q}(x_0)=\overline{v_{A_p\overline{B}_q}(x_0)}$  and $v^{\overline{A}_pB_q}(x_0)=v_{A_p\overline{B}_q}(x_0)$. Moreover we have
\begin{align*}
\overline{dz^{B_{n-1-q}}\wedge \overline{dz^{A_{n-p}}}}=(-1)^{(n-1-q)(n-1-p)}dz^{A_{n-p}}\wedge {\overline{dz^{B_{n-1-q}}}}.
\end{align*}

Substituting these inequalities into (\ref{*barv}) and (\ref{*v}), we get that
\begin{align*}
\overline{*v(x_0)}=(-1)^{n-1+(n-1)p+(n-1-q)(n-1-p)+(n-1-p)q}*\overline{v}(x_0)=*\overline{v}(x_0).
\end{align*}
\end{proof}

By standard arguments, we have the following
\begin{prop}\label{double *}
If $v\in \Omega^{p,q}_m(X)$, then $**v=(-1)^{p+q}v$.
\end{prop}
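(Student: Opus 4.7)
\medskip

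\noindent\textbf{Proof plan.} The identity is a pointwise statement, so it suffices to verify $**v = (-1)^{p+q}v$ at an arbitrarily chosen point $x_0 \in X$. Following the argument for Proposition \ref{*bar}, I would first pick canonical local coordinates $(z,\theta)$ near $x_0$ with $(z(x_0),\theta(x_0))=(0,0)$ and $g_{\alpha\bar\beta}(x_0)=\delta_{\alpha\beta}$, so that $g(x_0)=1$ and index-raising becomes trivial at $x_0$. By $\mathbb{C}$-linearity of $*$ (Proposition \ref{*formula}), I reduce the verification to the basis elements $v = dz^{A_p}\wedge d\overline{z^{B_q}}$ with $A_p, B_q$ strictly increasing multi-indices.

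Next, I would apply formula \eqref{*v} directly to get
\[
*v(x_0) = i^n (-1)^{k'} \mathrm{sgn}\begin{pmatrix} A_p & A_{n-1-p} \\ B_q & B_{n-1-q} \end{pmatrix} dz^{B_{n-1-q}}\wedge d\overline{z^{A_{n-1-p}}},
\]
with $k' = (n-1)(n-2)/2 + (n-1)p$. Since $*v$ has bidegree $(n-1-q,n-1-p)$, I then apply \eqref{*v} again, now with the roles $\tilde p = n-1-q$, $\tilde q = n-1-p$, and the new multi-indices $\tilde A_{\tilde p}=B_{n-1-q}$, $\tilde B_{\tilde q}=A_{n-1-p}$ (so that their complements read $\tilde A_{n-1-\tilde p}=B_q$ and $\tilde B_{n-1-\tilde q}=A_p$). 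This yields a form proportional to $dz^{A_p}\wedge d\overline{z^{B_q}}$, and all that remains is to identify the total sign.

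The sign bookkeeping is the heart of the proof and is where I expect all the technical friction. Three contributions need to be tracked simultaneously: (i) the two prefactors combine to $i^{2n}=(-1)^n$; (ii) the exponents $k'$ and $\tilde k' = (n-1)(n-2)/2 + (n-1)(n-1-q)$ combine to $(n-1)(2n-3+p-q)$; and (iii) the product of the two permutation signs reduces, after swapping the blocks $(A_p, A_{n-1-p})\leftrightarrow (A_{n-1-p},A_p)$ and analogously for $B$, to $(-1)^{p(n-1-p)+q(n-1-q)}$. Collecting these powers of $-1$ modulo $2$ and simplifying (using $p(n-1-p)\equiv pn$ and $q(n-1-q)\equiv qn$ mod $2$) should deliver the claimed sign $(-1)^{p+q}$.

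The main obstacle is purely combinatorial: correctly interpreting the symbol $\mathrm{sgn}(\,\cdot\,)$ used in \eqref{*barv}--\eqref{*v} and properly accounting for the swap between the holomorphic and antiholomorphic blocks under the second application of $*$. Once this is done, the identity matches the classical Hodge $\star$ identity on a complex manifold of complex dimension $n-1$, which is exactly what one expects since the $S^1$-direction $\omega_0$ has been factored out by the defining formula \eqref{definition of *}. Alternatively, one can avoid the index gymnastics by noting that $**$ must be a scalar multiple of the identity on each bidegree $(p,q)$ by Schur-type reasoning (both $*$ operations are CR-invariant and bidegree-determined), and then determining the scalar by testing on a single normalized basis element such as $v = dz^1\wedge\cdots\wedge dz^p\wedge d\bar z^1\wedge\cdots\wedge d\bar z^q$ at $x_0$.
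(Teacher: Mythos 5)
The paper itself supplies no argument here --- it simply says ``by standard arguments'' --- and your plan is exactly that standard argument (pointwise verification at a point where $g_{\alpha\bar\beta}(x_0)=\delta_{\alpha\beta}$, reduction to basis monomials by linearity, two applications of \eqref{*v}, sign bookkeeping), so in approach there is nothing to object to. One concrete caveat on your item (i), though: if you take the prefactor $i^{n}$ in \eqref{*v} at face value, your three contributions sum to
\begin{align*}
n+(n-1)(2n-3+p-q)+p(n-1-p)+q(n-1-q)\equiv (2n-1)+(2n-1)(p+q)\equiv 1+p+q \pmod 2,
\end{align*}
which yields $**v=-(-1)^{p+q}v$, the wrong sign. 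The resolution is that the $i^{n}$ in \eqref{*v} is a typo: since \eqref{*v} is obtained from \eqref{*barv} by replacing $\overline{v}$ with $v$ and interchanging $A$ with $B$, its prefactor must be $i^{n-1}$, matching \eqref{*barv} and the fact that the relevant complex dimension is $n-1$. With $i^{2(n-1)}=(-1)^{n-1}$ in place of $(-1)^{n}$ the total exponent becomes $2(n-1)+(2n-1)(p+q)\equiv p+q \pmod 2$, and your computation closes correctly. Your items (ii) and (iii) are right as stated (in particular $p(n-1-p)\equiv pn$ and $q(n-1-q)\equiv qn$ mod $2$), and your fallback observation that $**$ must reduce to the classical Hodge identity in complex dimension $n-1$ once the $\omega_0$-direction is factored out is a sound sanity check that would have caught the stray factor of $-1$.
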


In the sequel, we will deduce a formula for $\overline{\partial}_b^*$. 

Let $u\in \Omega^{p,q-1}_m(X)$ and $v\in \Omega^{p,q}_m(X)$. Recall that $\overline{\partial}_b^*$ is defined by the following formula
\begin{align}\label{definition}
(\overline{\partial}_bu|v)=(u|\overline{\partial}_b^*v).
\end{align}

By the definition of the $*$-operator, we have that
\begin{align*}
(\overline{\partial}_bu|v)=\int_X\overline{\partial}_bu\wedge*\overline{v}\wedge\omega_0.
\end{align*}

Since $X$ is compact, we have that
\begin{align}\label{stokes}
\int_Xd(u\wedge *\overline{v}\wedge\omega_0)=0.
\end{align}

Since $u\wedge*\overline{v}$ is an $(n-1,n-2)$ form, we have that 
\[\partial_b(u\wedge*\overline{v})=0 \mbox{ and }  \partial_b(u\wedge*\overline{v}\wedge\omega_0)=0. \]
Then we obtain that
\begin{align}\label{integration by part}
d(u\wedge*\overline{v}\wedge\omega_0)&=\partial_b(u\wedge*\overline{v}\wedge\omega_0)+\overline{\partial}_b(u\wedge*\overline{v}\wedge\omega_0)\\
&=\overline{\partial}_bu\wedge*\overline{v}\wedge\omega_0-(-1)^{p+q}u\wedge\overline{\partial}_b*\overline{v}\wedge\omega_0.
\end{align}

From (\ref{stokes}) and (\ref{integration by part}), we have that
\begin{align}\label{debarb star 1}
(\overline{\partial}_bu|v)=\int_X\overline{\partial}_bu\wedge*\overline{v}\wedge\omega_0=(-1)^{p+q}\int_Xu\wedge\overline{\partial}_b*\overline{v}\wedge\omega_0.
\end{align}

By Proposition \ref{double *}, we have that
\begin{align}\label{debarb star 2}
\int_Xu\wedge\overline{\partial}_b*\overline{v}\wedge\omega_0=-\int_Xu\wedge**\overline{\partial}_b*\overline{v}\wedge\omega_0.
\end{align}

Combining (\ref{definition}), (\ref{debarb star 1}) and (\ref{debarb star 2}), we have that
\begin{align}\label{dbarb star}
\overline{\partial}^*_b=-*\partial_b*.
\end{align}

Thus we have proved the following
\begin{prop}
$\overline{\partial}^*_b=-*\partial_b*$.
\end{prop}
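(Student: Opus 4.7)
The plan is to establish the identity by verifying the defining property of the formal adjoint: I would show $(\overline{\partial}_b u | v) = (u | -*\partial_b *v)$ for every $u \in \Omega^{p,q-1}_m(X)$ and $v \in \Omega^{p,q}_m(X)$. First I would translate the left-hand pairing into an integral of a wedge product via the defining relation \eqref{definition of *} of the CR Hodge-$*$ operator, obtaining
\begin{align*}
(\overline{\partial}_b u | v) = \int_X \overline{\partial}_b u \wedge *\overline{v} \wedge \omega_0,
\end{align*}
which places the computation entirely on the level of differential forms on the compact closed manifold $X$.

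Next, since $X$ has no boundary, Stokes' theorem forces $\int_X d(u \wedge *\overline{v} \wedge \omega_0) = 0$, and the key technical step is to extract the desired identity by showing that only the $\overline{\partial}_b$-component of $d$ contributes nontrivially here. Two type-based observations justify this reduction: the form $u \wedge *\overline{v}$ has bidegree $(n-1, n-2)$, so any $\partial_b$-component would land in the bundle $T^{*n,n-2}X = 0$; and any contribution coming from the transverse $T$-direction of $d$ (in particular from $d\omega_0$) produces, after wedging with $\omega_0$, a form of bidegree exceeding $(n-1,n-1)$ and must vanish. Thus $d(u \wedge *\overline{v} \wedge \omega_0) = \overline{\partial}_b(u \wedge *\overline{v} \wedge \omega_0)$, and the Leibniz rule then gives
\begin{align*}
0 = \int_X \overline{\partial}_b u \wedge *\overline{v} \wedge \omega_0 + (-1)^{p+q-1}\int_X u \wedge \overline{\partial}_b(*\overline{v}) \wedge \omega_0,
\end{align*}
after verifying separately that $\overline{\partial}_b \omega_0$ contributes nothing for the same dimensional reason.

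The final step is to repackage $\overline{\partial}_b(*\overline{v})$ as $*(\partial_b *v)$, so that the right-hand integral is recognizably $(u | -*\partial_b *v)$. Here I would invoke Proposition \ref{*bar}, which gives $*\overline{v} = \overline{*v}$ and hence $\overline{\partial}_b(*\overline{v}) = \overline{\partial_b(*v)}$, and then insert the involution $**w = (-1)^{\deg w} w$ from Proposition \ref{double *} on the appropriate type to identify the integrand with $u \wedge *\overline{(-*\partial_b *v)} \wedge \omega_0$. Comparing against the definition of the inner product finishes the proof. I expect the hardest part to be the careful justification of the middle step---the decomposition of $d$ on a CR manifold with a transversal $S^1$-action and the precise vanishing of the $T$-direction and $d\omega_0$ contributions---together with keeping track of the signs in the Leibniz expansion and the $**$-involution; once those are pinned down, the argument is a direct CR analog of the classical Hodge-theoretic formula $\overline{\partial}^* = -*\partial *$.
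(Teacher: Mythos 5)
Your proposal follows essentially the same route as the paper's proof: express $(\overline{\partial}_b u|v)$ as $\int_X \overline{\partial}_b u\wedge *\overline{v}\wedge\omega_0$, apply Stokes on the closed manifold, use the bidegree of the $(n-1,n-2)$-form $u\wedge *\overline{v}$ to kill the $\partial_b$-contribution, and then repackage $\overline{\partial}_b(*\overline{v})$ via Propositions \ref{*bar} and \ref{double *}. The only difference is that you are somewhat more explicit than the paper about why the transverse $T$-direction and $d\omega_0$ contribute nothing, which is a point the paper passes over silently.
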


Now let $u\in \mathcal{H}^{p,q}_{b,m}(X)$, i.e. $\overline{\partial}_bu=0$ and $\overline{\partial}^*_bu=0$. Set $v=*\overline{u}$. By Proposition \ref{*formula} and (\ref{*barv}), we have that $v\in \Omega^{n-1-p,n-1-q}_{-m}(X)$.
\begin{lemma}\label{dual form}
$\overline{\partial}_bv=0$ and $\overline{\partial}^*_bv=0$, which means that 
$$v\in \mathcal{H}^{n-1-p,n-1-q}_{b,-m}(X).$$
\end{lemma}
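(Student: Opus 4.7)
The plan is to verify the two defining conditions $\overline{\partial}_b v = 0$ and $\overline{\partial}^*_b v = 0$ separately, using three ingredients already in hand: the formula $\overline{\partial}^*_b = -*\partial_b *$ just established in \eqref{dbarb star}, the conjugation identity $*\overline{w} = \overline{*w}$ from Proposition \ref{*bar}, and the involution property $**w = (-1)^{p+q}w$ from Proposition \ref{double *}. A background observation I will also use is the elementary identity $\partial_b \overline{w} = \overline{\overline{\partial}_b w}$, which follows directly from the definition of $\partial_b$ and $\overline{\partial}_b$ on a CR manifold, together with the fact that conjugation sends $\Omega^{p,q}_m$ to $\Omega^{q,p}_{-m}$ (since $T$ is a real vector field, $T\overline{w} = \overline{Tw}$).

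First, I would unpack the hypothesis $\overline{\partial}^*_b u = 0$ using \eqref{dbarb star}: it says $-*\partial_b * u = 0$, and since $*$ is invertible (as $**$ equals a nonzero scalar by Proposition \ref{double *}), this is equivalent to $\partial_b(*u) = 0$. Then I rewrite $v = *\overline{u} = \overline{*u}$ using Proposition \ref{*bar} and compute
\begin{align*}
\overline{\partial}_b v \;=\; \overline{\partial}_b\,\overline{*u} \;=\; \overline{\partial_b(*u)} \;=\; 0.
\end{align*}
This handles the first condition and uses only the vanishing of $\overline{\partial}^*_b u$.

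Second, for $\overline{\partial}^*_b v = 0$, I apply \eqref{dbarb star} directly:
\begin{align*}
\overline{\partial}^*_b v \;=\; -*\partial_b * v \;=\; -*\partial_b\bigl(**\overline{u}\bigr) \;=\; (-1)^{p+q+1}*\partial_b \overline{u} \;=\; (-1)^{p+q+1}*\,\overline{\overline{\partial}_b u} \;=\; 0,
\end{align*}
where I invoked Proposition \ref{double *} on $\overline{u} \in \Omega^{q,p}_{-m}$ (giving the sign $(-1)^{p+q}$) and finally the hypothesis $\overline{\partial}_b u = 0$. Proposition \ref{*formula} plus the Fourier bookkeeping noted above confirms that $v \in \Omega^{n-1-p,n-1-q}_{-m}(X)$, so $v$ indeed lands in the advertised space.

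The main obstacle is purely cosmetic: tracking the $(-1)^{?}$ factors from $**$ across the various bidegrees. Since every such factor is a nonzero scalar it never disturbs the vanishing, but I will have to be attentive to it when writing out the chain of equalities. No analytic input is needed beyond the three formal identities listed; the lemma is essentially a diagram chase once the Hodge-$*$ formalism from Propositions \ref{*formula}, \ref{*bar}, \ref{double *} and \eqref{dbarb star} is available.
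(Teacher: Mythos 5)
Your proposal is correct and takes essentially the same route as the paper: both computations rest on the identity $\overline{\partial}^*_b=-*\partial_b*$, Proposition \ref{*bar}, Proposition \ref{double *}, and the conjugation relation $\partial_b\overline{w}=\overline{\overline{\partial}_b w}$, with your only cosmetic deviation being that you invoke the injectivity of $*$ to conclude $\partial_b(*u)=0$ rather than carrying the $**$ factor through the first chain of equalities as the paper does. The sign bookkeeping is immaterial since every term vanishes, so nothing further is needed.
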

\begin{proof}
By direct computations, we have that
\begin{align*}
&\overline{\partial}_bv=\overline{\partial}_b*\overline{u}=(-1)^{p+q+1}\overline{**\partial_b*u}=(-1)^{p+q}\overline{*\overline{\partial}_b^*u}=0 .\\
&\overline{\partial}_b^*v=-*\partial_b**\overline{u}=(-1)^{p+q}*\overline{\overline{\partial}_bu}=0.
\end{align*}

Thus we complete the proof of the Lemma.
\end{proof}

Now we are on the way to get the following
\begin{thm}[= Theorem \ref{Serre duality I}]\label{Serre duality}Let $X$ be a compact $CR$-manifold of real dimension $2n-1$, which admits a transversal $S^1$-action. Then the Hodge $*$-operator defined by (\ref{definition of *}) induces a conjugate linear isomorphism
\begin{align*}
*:\mathcal{H}^{p,q}_{b,m}(X)&\rightarrow \mathcal{H}^{n-1-p,n-1-q}_{b,-m}(X)\\
u&\mapsto *\overline{u}.
\end{align*}
In particular, by combining the Hodge theory for $\Box^{p,q}_{b,m}$, we have the following conjugate line isomorphism in the cohomologcial level
\begin{align*}
H^{p,q}_{b,m}(X)\simeq H^{n-1-p,n-1-q}_{b,-m}(X).
\end{align*}
\end{thm}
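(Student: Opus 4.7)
The plan is to define a conjugate linear map $\Phi(u) := *\bar u$, show it restricts to an isomorphism between the harmonic spaces $\mathcal{H}^{p,q}_{b,m}(X)$ and $\mathcal{H}^{n-1-p,n-1-q}_{b,-m}(X)$, and then invoke the Hodge type decomposition of Theorem \ref{hodge} to pass to cohomology. All the raw material has been assembled in the preceding propositions; the task is to assemble it cleanly.

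First I would verify that $\Phi$ has the correct codomain. Proposition \ref{*formula} gives $*:\Omega^{p,q}_m(X)\to \Omega^{n-1-q,n-1-p}_m(X)$, preserving the Fourier index $m$ (this is where the $T$-rigidity of the metric and the $T$-invariance of the volume form and of $\omega_0$ enter). Complex conjugation then sends $\Omega^{n-1-q,n-1-p}_m(X)$ to $\Omega^{n-1-p,n-1-q}_{-m}(X)$ because $\overline{e^{im\theta}}=e^{-im\theta}$, so $\Phi$ lands in $\Omega^{n-1-p,n-1-q}_{-m}(X)$ as required. Conjugate linearity of $\Phi$ is immediate from the complex linearity of $*$ in Proposition \ref{*formula}.

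Second I would show $\Phi$ preserves harmonicity. A form $u\in\Omega^{p,q}_m(X)$ is harmonic iff $\overline{\partial}_b u = 0$ and $\overline{\partial}_b^* u = 0$. Using the adjoint identity $\overline{\partial}_b^* = -*\partial_b*$, the commutation $\overline{*v}=*\bar v$ of Proposition \ref{*bar}, and the involution $**v = (-1)^{p+q}v$ of Proposition \ref{double *}, a direct computation (already carried out in Lemma \ref{dual form}) gives
\begin{align*}
\overline{\partial}_b(\Phi u) &= \overline{\partial}_b *\bar u = (-1)^{p+q}\,\overline{*\,\overline{\partial}_b^* u} = 0,\\
\overline{\partial}_b^*(\Phi u) &= -*\partial_b **\bar u = (-1)^{p+q}*\overline{\overline{\partial}_b u} = 0.
\end{align*}
Thus $\Phi(\mathcal{H}^{p,q}_{b,m}(X))\subset \mathcal{H}^{n-1-p,n-1-q}_{b,-m}(X)$.

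Third, I would exhibit an inverse. Applying $\Phi$ twice, $\Phi^2(u) = *\overline{*\bar u} = *(*u) = (-1)^{p+q}u$ by Propositions \ref{*bar} and \ref{double *}. The same computation applied to the map $\Phi':\mathcal{H}^{n-1-p,n-1-q}_{b,-m}(X)\to \mathcal{H}^{p,q}_{b,m}(X)$, $v\mapsto *\bar v$, gives $\Phi'\circ\Phi = (-1)^{p+q}\operatorname{id}$ and $\Phi\circ\Phi' = (-1)^{(n-1-p)+(n-1-q)}\operatorname{id}$, so both maps are bijections up to a sign, hence bijections. Finally, Theorem \ref{hodge} identifies $\mathcal{H}^{p,q}_{b,m}(X)\simeq H^{p,q}_{b,m}(X)$ and $\mathcal{H}^{n-1-p,n-1-q}_{b,-m}(X)\simeq H^{n-1-p,n-1-q}_{b,-m}(X)$, yielding the desired conjugate linear isomorphism of cohomology groups.

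There is no serious obstacle once the $*$-calculus has been set up; the only delicate point is the Fourier weight bookkeeping in the first step, which is why the hypothesis that the Hermitian metric is $T$-rigid (Lemma \ref{Hermitian metric}) together with the $T$-independence of the coefficients in the local representation of Lemma \ref{local section} are essential for Proposition \ref{*formula}. The local freeness hypothesis in the theorem statement ensures that the regular locus is open and dense so that the pointwise identities extend by continuity.
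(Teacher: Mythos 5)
Your proposal is correct and follows essentially the same route as the paper: the $*$-calculus of Propositions \ref{*formula}, \ref{*bar} and \ref{double *}, the identity $\overline{\partial}^*_b=-*\partial_b*$, and Lemma \ref{dual form} carry the whole argument, with Theorem \ref{hodge} passing to cohomology at the end. The only (welcome) refinement is that you obtain bijectivity from the involution property $\Phi^2=(-1)^{p+q}\,\mathrm{id}$, whereas the paper merely notes injectivity ($u\neq 0$ implies $*\overline{u}\neq 0$) and leaves the surjectivity to the symmetric construction.
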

\begin{proof}
Based on Lemma \ref{dual form}, we only need to prove that if $u\neq 0$, then $*\overline{u}\neq 0$, which follows directly from the definition of the $*$-operator.
\end{proof}
Combining Theorem \ref{main theorem}, we obtain the following
\begin{thm}[$=$ Theorem \ref{improved estimate}]\label{improved estimate 1}
Let $X$ be a compact $CR$-manifold of real dimension $2n-1$, which admits a transversal CR $S^1$-action. Suppose that $X$ is a weakly pseudo-convex CR manifold. Then we have that for $q\geq 1$,
\begin{align*}
\dim H^{0,q}_{b,-m}(X)\leq Cm^{q}, ~~~~\mbox{~~~as~~~}~~~~m\rightarrow +\infty.
\end{align*}
\end{thm}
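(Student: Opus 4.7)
The plan is to derive this as an immediate consequence of the Serre-type duality theorem (Theorem \ref{Serre duality I}) together with the main eigenvalue estimate (Theorem \ref{main theorem}) specialized to the harmonic case $\lambda = 0$.

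First, I would invoke Theorem \ref{Serre duality I}, which provides a conjugate-linear isomorphism $H^{p,q}_{m}(X) \simeq H^{n-1-p, n-1-q}_{-m}(X)$ for all admissible $p, q$ and all $m \in \mathbb{Z}$. Since the isomorphism preserves complex dimensions, applying it with $p = 0$ and with the sign of $m$ reversed yields $\dim H^{0,q}_{b,-m}(X) = \dim H^{n-1, n-1-q}_{b, m}(X)$, trading a cohomology group at the negative Fourier mode for one of bidegree $(n-1, \cdot)$ at the positive Fourier mode. This is precisely the setting in which Theorem \ref{main theorem} is formulated.

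Second, I would combine the Hodge decomposition from Theorem \ref{hodge} -- which identifies $H^{n-1, n-1-q}_{b, m}(X)$ with the harmonic space $\mathcal{H}^{n-1, n-1-q}_{b, m} = \mathcal{H}^{n-1, n-1-q}_{b, m, \leq 0}$ -- with Theorem \ref{main theorem} at $\lambda = 0$, which bounds the dimension of that harmonic space by a polynomial in $m$. Feeding in the indices (the bidegree $(n-1, n-1-q)$ in place of $(n-1, q)$ in the main theorem) produces the asserted bound. The hypothesis $q \geq 1$ enters precisely to guarantee that the transported bidegree $(n-1, n-1-q)$ lies in the informative range $0 \leq n-1-q \leq n-2$ where the main theorem improves on the trivial Hsiao--Li bound $o(m^{n-1})$.

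Since all of the serious analytic content -- the scaling/localization argument, the modified Berndtsson trick, and the construction of the CR Hodge $*$-operator -- has already been carried out in the preceding sections, the proof is essentially a one-line combination of the two input theorems, and no new estimate is required. The only point requiring any care is correct index bookkeeping between the $(0,q)$ slot at Fourier mode $-m$ and the $(n-1, n-1-q)$ slot at Fourier mode $m$; there is no genuine obstacle beyond this bookkeeping.
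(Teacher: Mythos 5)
Your route is exactly the paper's: Theorem \ref{improved estimate 1} is derived there in one line from Theorem \ref{Serre duality I} together with Theorem \ref{main theorem} at $\lambda=0$, so there is no difference in strategy. However, the one step you yourself single out as the only delicate point --- the index bookkeeping --- is where the argument as you wrote it does not close. Serre duality gives $\dim H^{0,q}_{b,-m}(X)=\dim H^{n-1,\,n-1-q}_{b,m}(X)$, and Theorem \ref{main theorem} with $\lambda=0$ in bidegree $(n-1,Q)$ gives the bound $C m^{\,n-1-Q}$. Substituting $Q=n-1-q$ yields
\begin{align*}
\dim H^{0,q}_{b,-m}(X)\;\leq\; C m^{\,n-1-(n-1-q)}\;=\;C m^{q},
\end{align*}
not $C m^{n-1-q}$ as asserted. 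So the chain of reasoning you describe proves a bound with exponent $q$, and the claim that feeding in the transported bidegree ``produces the asserted bound'' is not correct as stated.

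This is not a harmless relabeling, because the two exponents genuinely differ. Take $M=\mathbb{P}^1$, $L=\mathcal{O}(1)$, and let $X$ be the associated circle bundle (so $n=2$); then $H^{0,1}_{b,-m}(X)\simeq H^{0,1}(\mathbb{P}^1,\mathcal{O}(-m))$ has dimension $m-1$, which is compatible with $Cm^{q}=Cm$ but violates $Cm^{n-1-q}=C$ for $q=n-1=1$. Thus the exponent your computation actually delivers, $m^{q}$, is the correct one (it agrees with classical Serre duality plus Berndtsson's estimate in the Grauert tube picture), while the exponent $n-1-q$ cannot be obtained by this argument and fails in examples when $q>(n-1)/2$. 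Correspondingly, your explanation of the hypothesis $q\geq 1$ is tied to the wrong exponent: the range in which the bound $Cm^{q}$ improves on the Hsiao--Li estimate $o(m^{n-1})$ is $q\leq n-2$, not $q\geq 1$. You should either state the conclusion as $\dim H^{0,q}_{b,-m}(X)\leq Cm^{q}$ with the hypothesis adjusted accordingly, or supply a genuinely different argument if the exponent $n-1-q$ is really intended; the Serre duality route does not produce it.
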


\section{Applications}\label{application}

In this section, we introduce some applications of our main results.
\subsection{Morse     inequalities and Grauert-Riemenschneider    criterion}

Let $X$ be compact connected CR manifold of real dimension $2n-1$, $n\geq 2$ which admits a transversal CR $S^1$-action.  Set
$X(q):=\{x\in X|\mathcal{L}_x $ has exactly $q$ negative eigenvalues and $n-1-q$ positive eigenvalues$\}$.

In \cite{HL16},  it is proved that the following Morse type inequalities hold.

\begin{thm}[c.f. \cite{HL16}] Let $X$ be a compact connected CR manifold with a transversal CR $S^1$-action. Assume that dim$_{\mathbb{R}}X=2n-1$, $n\geq 2$. Then for every $q=0,1,\cdots,n-1$, as $m\rightarrow +\infty$, we have
\begin{align*}
\mbox{dim}H^{0,q}_{b,m}(X)&\leq \frac{m^{n-1}}{2\pi^n}\int_{X(q)}|\det \mathcal{L}_x|dv_{X}(x)+o(m^{n-1}), \\
\sum_{j=0}^q(-1)^{q-j} &\mbox{dim}H^{0,j}_{b,m}(X)\\
&\leq \frac{m^{n-1}}{2\pi^n}\sum^{q}_{j=0}(-1)^{q-j}\int_{X(j)}|\det\mathcal{L}_x|dv_X(x)+o(m^{n-1}).
\end{align*}
In particular, when $q=n-1$, as $m\rightarrow+ \infty$, we have the asymptotic Riemann-Roch theorem
\begin{align*}
\sum^{n-1}_{j=0}(-1)^j&\dim H^{0,j}_{b,m}(X)\\
&=\frac{m^{n-1}}{2\pi^n}\sum_{j=0}^{n-1}(-1)^j\int_{X(j)}|\det \mathcal{L}_x|dv_X(x)+o(m^{n-1}).
\end{align*}
\end{thm}

From Theorem \ref{Serre duality I}, we get that

\begin{thm}[=Theorem \ref{weak m negative-1}]\label{weak m negative}Let $X$ be a compact connected CR manifold with a transversal CR $S^1$-action. Assume that dim$_{\mathbb{R}}X=2n-1$, $n\geq 2$. Then for every $q=0,1,\cdots,n-1$, as $m\rightarrow +\infty$,  we have
\begin{align*}
&\mbox{dim}H^{n-1, q}_{b,-m}(X)\leq \frac{m^{n-1}}{2\pi^n}\int_{X(n-1-q)}|\det \mathcal{L}_x|dv_{X}(x)+o(m^{n-1}), \\
&\sum_{j=n-1-q}^{n-1}(-1)^{j+q-(n-1)} \mbox{dim}H^{n-1,j}_{b,-m}(X)\\
&\leq \frac{m^{n-1}}{2\pi^n}\sum^{n-1}_{j=n-1-q}(-1)^{q+j-(n-1)}\int_{X(n-1-j)}|\det\mathcal{L}_x|dv_X(x)+o(m^{n-1}).
\end{align*}
In particular, when $q=n-1$, as $m\rightarrow+ \infty$, we have the asymptotic Riemann-Roch theorem
\begin{align*}
&\sum^{n-1}_{j=0}(-1)^{n-1-j}\dim H^{n-1,j}_{b,m}(X)\\
&=\frac{m^{n-1}}{2\pi^n}\sum_{j=0}^{n-1}(-1)^{n-1-j}\int_{X(n-1-j)}|\det \mathcal{L}_x|dv_X(x)+o(m^{n-1}).
\end{align*}

\end{thm}

The following is the corresponding Morse type inequalities for $m\leq 0$ in \cite{HL16}.

\begin{thm} [c.f. \cite{HL16}]Let $X$ be a compact connected CR manifold with a transversal CR $S^1$-action. Assume that $\dim_{\mathbb{R}}X=2n-1$, $n\geq 2$. For every $q=0,1,2,\cdots, n-1$, as $m\rightarrow -\infty$, we have
\begin{align*}
&\dim H^{0,q}_{b,m}(X)\leq \frac{|m|^{n-1}}{2\pi^n}\int_{X(n-1-q)}|\det\mathcal{L}_x|dv_X(x)+o(|m|^{n-1}),\\
&\sum^q_{j=0}(-1)^{q-j}\dim H^{0,j}_{b,m}(X)\\
&\leq \frac{|m|^{n-1}}{2\pi^n}\sum^{q}_{j=0}(-1)^{q-j}\int_{X(n-1-j)}|\det\mathcal{L}_x|dv_X(x)+o(|m|^{n-1}).
\end{align*}
In particular, when $q=n-1$, as $m\rightarrow -\infty$, we have the following asymptotic Riemann-Roch theorem
\begin{align*}
&\sum^{n-1}_{j=0}(-1)^j\dim H^{0,j}_{b,m}(X)\\
&=\frac{|m|^{n-1}}{2\pi^n}\sum_{j=0}^{n-1}(-1)^j\int_{X(n-1-j)}|\det \mathcal{L}_x|dv_X(x)+o(|m|^{n-1}).
\end{align*}
\end{thm}

Applying our Theorem \ref{Serre duality I}, we have that

\begin{thm} [=Theorem \ref{morse m positive-1}]\label{morse m positive}Let $X$ be a compact connected CR manifold with a transversal CR $S^1$-action. Assume that $\dim_{\mathbb{R}}X=2n-1$, $n\geq 2$. For every $q=0,1,2,\cdots, n-1$, as $m\rightarrow +\infty$, we have
\begin{align*}
&\dim H^{n-1,q}_{b,m}(X)\leq \frac{m^{n-1}}{2\pi^n}\int_{X(q)}|\det\mathcal{L}_x|dv_X(x)+o(m^{n-1}),\\
&\sum^{n-1}_{j=n-1-q}(-1)^{q+j-(n-1)}\dim H^{n-1,j}_{b,m}(X)\\
&\leq \frac{m^{n-1}}{2\pi^n}\sum^{n-1}_{j=n-1-q}(-1)^{q+j-(n-1)}\int_{X(j)}|\det\mathcal{L}_x|dv_X(x)+o(m^{n-1}).
\end{align*}
In particular, when $q=n-1$, as $m\rightarrow +\infty$, we have the following asymptotic Riemann-Roch theorem
\begin{align*}
&\sum^{n-1}_{j=0}(-1)^{n-1-j}\dim H^{n-1,j}_{b,m}(X)\\
&=\frac{m^{n-1}}{2\pi^n}\sum_{j=0}^{n-1}(-1)^{n-1-j}\int_{X(j)}|\det \mathcal{L}_x|dv_X(x)+o(m^{n-1}).
\end{align*}
\end{thm}

From the asymptotic Riemann-Roch theorem in Theorem \ref{morse m positive} and Theorem \ref{main theorem}, we conclude that
\begin{thm}[=Theorem \ref{GR criterion-1}]\label{GR criterion}
Let $(X, T^{1,0}X)$ be a compact connected CR manifold of dimension $2n-1$, $n\geq 2$, where $T^{1,0}X$ is the given CR structure on $X$. Assume that $X$ admits a transversal CR $S^1$-action. If  $X$ is weakly pseudoconvex and strongly pseudoconvex at a point, then
\begin{align*}
\dim H^{n-1,0}_{b,m}(X)\approx m^{n-1}\mbox{~as~~} m\rightarrow +\infty.
\end{align*}
That is to say, there are a lot of CR sections of the canonical bundle $K_X$ of $X$.
\end{thm}

By $A\approx B$, we mean that there is a positive constant C such that $C^{-1}\leq A/B\leq  C$.


\subsection{Application to orbifold}

Let us first recall some basics of orbifold (c.f. \cite{CM13, MM07}).  Let $X$ be a Hausdorff space. An ($\mathcal{C}^\infty$) orbifold chart for an open set $U\subset M$ is a triple  $({\widetilde{U}, G},\varphi)$, where $\widetilde{U}$ is a domain in $\mathbb{R}^n$, $G$ is a finite group acting effectively as automorphisms of $\widetilde{U}$, and $\varphi_U:\widetilde{U}\rightarrow U$ is a continuous map such taht $\varphi\circ \sigma=\varphi$ for all $\sigma\in G$, inducing a homeomorphism from the quotient space $\widetilde{U}/G$ onto $U$. An injection between two charts $(\widetilde{U}, G,\varphi)$ and $(\widetilde{U}', G',\varphi')$ is a ($\mathcal{C}^\infty$) embedding $\lambda:\widetilde{U}\rightarrow \widetilde{U}'$ such that $\varphi'\circ \lambda=\varphi$.

An orbifold atlas on $X$ is a family $\mathcal{V}=\{(\widetilde{U}_i,G_i, \varphi_i)\}$ of orbifold charts such that $\mathcal{U}=\{U_i\}$ is a covering of $X$, where $U_i=\varphi_i(\widetilde{U}_i)$ and  given two charts $(\widetilde{U}_i, G_i,\varphi_i)$ and $(\widetilde{U}'_i, G'_i,\varphi'_i)$ and $x\in U_i\cap U_j$, there exist a chart $(\widetilde{U}_k, G_k,\varphi_k)$ with $x\in U_k$ and injections $\lambda_{ik}:(\widetilde{U}_k,G_k,\varphi_k)\rightarrow (\widetilde{U}_i,G_i,\varphi_i)$, $\lambda_{jk}:(\widetilde{U}_k,G_k,\varphi_k)\rightarrow (\widetilde{U}_j,G_j,\varphi_j)$.

An orbifold atlas $\mathcal{V}'$ is said to be a refinement of $\mathcal{V}$ if there exists an injection of every chart of $\mathcal{V}'$ into some chart of $\mathcal{V}$.  An orbifold $\mathcal{X}=(X,\mathcal{V})$ is a Hausdorff space $X$ with a (maximal) orbifold atlas $\mathcal{V}$.

We can assume an additional structure such as orientation, Riemannian metric, almost-complex structure or complex structure, CR structure on every $\widetilde{V}$ in the orbifold atlas $\mathcal{V}$. We understand the morphisms (and the groups) preserve the specified structure. Thus we can define oriented, Riemannian, almost-complex or complex and  CR orbifolds.

\begin{rmk}Let $X$ be a smooth manifold of real dimension $m$, and $G$ be a compact Lie group acting on $X$ locally free. Then the quotient space $X/G$ is an orbifold. Conversely, any orbifold $X$ can be realized  in this way, with $G=O(m)$, the orthogonal group of degree $m$ over $\mathbb{R}$.
\end{rmk}

An orbifold vector bundle $E$ over an orbifold $\mathcal{X}=(X,\mathcal{V})$ is defined as follows: $E$ is an orbifold  and for $U\in \mathcal{U}$, $(\widetilde{E}_U, G^E_U, \widetilde{\varphi}_U:\widetilde{E}_U\rightarrow \widetilde{U})$ is a $G^E_U$-equivariant vector bundle and $(\widetilde{E}_U, G^E_U, \widetilde{\varphi}_U)$ (resp. $(\widetilde{U}, G_U=G^E_U/K^E_U, \varphi_U)$, $K^E_U=\mbox{Ker}(G^E_U\rightarrow \mbox{Diffeo}(\widetilde{U}))$) is the orbifold structure on $E$ (resp. $X$). If $G^E_U$ acts effectively on $\widetilde{U}$ for $U\in \mathcal{U}$, we call $E$ a proper orbifold vector bundle.

\begin{rmk}
Let $E$ be an orbifold vector bundle on $(X,\mathcal{V})$. For $U\in \mathcal{U}$, let $\widetilde{E^{pr}_U}$ be the maximal  $K^E_U$-invariant  subbundle of $\widetilde{E}_U$ on $\widetilde{U}$. Then $(G_U, \widetilde{E^{pr}_U})$ defines a proper orbifold vector bundle on $(X,\mathcal{V})$, denoted by $E^{pr}$.
\end{rmk}

From the above Remark, without loss of generality,  we only consider proper orbifold vector bundle throughout this paper.

Let $E\rightarrow X$ be an orbifold vector bundle. A section $s:X\rightarrow E$
 is called $\mathcal{C}^k$ if for each $U\in \mathcal{U}$, $s|_U$ is covered by a $G^E_U$-invariant $\mathcal{C}^k$ section $\widetilde{s}_U:\widetilde{U}\rightarrow \widetilde{E}_U$.

If $X$ is an oriented orbifold, we define the integral $\int_X\Phi$ for a form $\Phi$ over $X$ as follows: if Supp$(\Phi)\subset U\in \mathcal{U}$, then
\begin{align*}
\int_X\Phi:=\frac{1}{|G_U|}\int_{\widetilde{U}}\widetilde{\Phi}_U.
\end{align*}

Now let $(X, T^{1,0}X)$ be a CR orbifold, i.e., there is an orbifold atlas $\mathcal{V}$ on $X$, such that for every  $(\widetilde{U},G,\varphi)\in \mathcal{V}$, there is a CR structure $(\widetilde{U}, T^{1,0}\widetilde{U}) $ so that the group $G$ acts on and  preserves the CR structure on $\widetilde{U}$.

In a similar way, one can define the $\overline{\partial}_b$-complex
on  $X$. Furthermore, Theorem \ref{regular 1}, Theorem \ref{regular
2}, Theorem \ref{hodge} also hold for compact CR orbifold with
transversal CR $S^1$-action. 
Since on every orbifold chart $(\widetilde{U},G,\varphi)$, the $S^1$-action generates a $G$-invariant vector field $T$ on $\widetilde{U}$ which preserves the CR structure on $\widetilde{U}$ and satisfies the transversal condition, then by almost the same arguments as in \cite{BRT85}, we can see  that Bauoendi-Roth-Treves' Theorem \ref{local coordinate} on the existence of canonical local coordinates also  holds on the CR orbifold setting. Then similarly, we have the following


\begin{thm}[=Theorem \ref{main theorem orbifold-1}]\label{main theorem orbifold} Let $(X, T^{1,0}X)$ be a compact connected CR orbifold of dimension $2n-1$, $n\geq 2$, where $T^{1,0}X$ is the given CR structure on $X$. Assume that $X$ admits a transversal CR $S^1$ action and  $X$ is weakly pseudoconvex. Then for $m$ sufficiently large, if $0\leq \lambda\leq m$,
\begin{align*}\dim \mathcal{H}^{n-1,q}_{b,m,\leq \lambda}\leq C(\lambda+1)^qm^{n-1-q},
\end{align*}
and if $1\leq m\leq \lambda$,
\begin{align*}
\dim \mathcal{H}^{n-1,q}_{b,m,\leq \lambda}\leq C\lambda^{n-1}.
\end{align*}

\end{thm}

Here we omit the details and  just give  a sketch of the proof. We take the  advantage that the  key estimate in our proof of Theorem \ref{main theorem} is a pointwise estimate. We work on  a local canonical orbifold chart $(\widetilde{U}, G, \varphi)$.  Note that  everything on $\widetilde{U}$ is invariant under the group action $G$, then  the desired estimate on the orbifold chart above can be achieved by the same method as before, which can be  naturally pushed-down to $U$ by the invariance under the group action. Then from the compactness of the orbifold, we can complete the proof of Theorem \ref{main theorem orbifold}.

\begin{rmk}
Similarly, Theorem \ref{Serre duality} and Theorem \ref{improved estimate} can also be generalized to CR orbifold setting.
\end{rmk}

Let $M$ be a compact connected complex manifold and $G$ be a compact Lie group acts analytically on $M$. We assume that the action of $G$ on $M$ is locally free. Then  $M/G$ is a complex orbifold. Suppose that $dim_{\mathbb{C}}(M/G)=n$. Let $L\rightarrow M$ be a $G$-invariant holomorphic line bundle over $M$, i.e., the transition functions of $L$ are $G$-invariant. Suppose that $L$ admits a locally free $G$-action compatible with that on $M$, i.e., an action $(g,v)(\in G\times L)\mapsto g\circ v\in L  $ with the property $\pi(g\circ v)=g\circ (\pi(v))$ where $\pi:L\rightarrow M$ is  the bundle projection. Then $L/G$ is an orbifold holomorphic line bundle over $M/G$.

The $G$-action on $L$ can be naturally extended to $L^m:=L^{\otimes m}$ and $L^*$ (the dual line bundle of $L$). Then $L^m/G$ and $L^*/G$ are also orbifold holomorphic line bundles over $M/G$. Put for $p,q=0,\cdots, n$,
 \[\Omega^{p,q}(M/G, L^m/G):=\{u\in \Omega^{p,q}(M,L^m): g^*u=u, \forall g\in G \}.\]

Since the Cauchy-Riemann operator  is $G$-invariant, we have  the    $\overline{\partial}$-complex
$$(\overline{\partial}, \Omega^{p,\bullet}(M/G, L^m/G)),~~~~p=0,1,\cdots,n,$$
and the $(p,q)$-th Dolbeault cohomology group:
\begin{align*}
H^{p,q}&(M/G, L^m/G)\\
&:=\frac{\mbox{Ker}\overline{\partial}:\Omega^{p,q}(M/G, L^m/G)\rightarrow \Omega^{p,q+1}(M/G, L^m/G)}{\mbox{Im}\overline{\partial}:\Omega^{p,q-1}(M/G, L^m/G)\rightarrow \Omega^{p,q+1}(M/G, L^m/G)}.
\end{align*}

Take any $G$-invariant Hermitian metric $h^L$   on $L$, it induces  a $G$-invariant  Hermitian metric $h^{L^*}$ on $L^*$, set $\widetilde{X}=\{v\in L^*| |v|^2_{h^{L^*}}=1\}$.  Then  $X=\widetilde{X}/G$ is a compact CR orbifold, and the natural $S^1$-action on $\widetilde{X}$ induces a locally free $S^1$-action on $X$ which can be verified that the action is CR and transversal.

\begin{thm}[= Theorem \ref{identification-1}] \label{identification}For every $p,q=0,1,\cdots,n$ and every $m\in \mathbb{Z}$, there is a bijective map $A^{(p,q)}_m:\Omega^{(p,q)}_m(X)\rightarrow \Omega^{(p,q)}(M, L^m)$ such that $A^{(p,q+1)}_m\overline{\partial}_{b,m}=\overline{\partial}A^{(p,q)}_m$ on $\Omega^{(p,q)}_m(X)$. Thus we have that
\begin{align*}
\Omega^{p,q}_m(X)&\simeq \Omega^{p,q}(M/G,L^m/G)\\
H^{p,q}_{b,m}(X)&\simeq H^{p,q}(M/G,L^m/G).
\end{align*}
In particular, $\dim H^{p,q}_{b,m}(X)<\infty$.
\end{thm}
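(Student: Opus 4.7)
The plan is to extend the case $p=0$ of \cite{CHT15} to general $p$, and then descend from the circle bundle $\widetilde{X}$ to the orbifold quotient $X = \widetilde{X}/G$ by passing to $G$-invariants. First I work on $\widetilde{X}$ using the Baouendi--Rothschild--Treves canonical coordinates of Theorem \ref{local coordinate}: over an open set $U \subset M$ with a local holomorphic frame $e$ of $L$ satisfying $|e|^2_{h^L} = e^{-2\phi}$, the parametrization $v = e^{-\phi(z) + i\theta} e^*(z)$ of $\pi^{-1}(U) \subset \widetilde{X}$ gives coordinates $(z, \theta)$ in which $T = \partial/\partial\theta$ and the $1$-forms $dz_1,\dots,dz_{n-1}$, $d\bar z_1,\dots,d\bar z_{n-1}$ constitute local frames of $T^{*1,0}\widetilde{X}$ and $T^{*0,1}\widetilde{X}$ respectively. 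Any $u \in \Omega^{p,q}_m(\widetilde{X})$ then has the local form
\begin{equation*}
u = e^{im\theta}\mathop{{\sum}'}_{|I|=p,|J|=q} v_{IJ}(z)\, dz^I \wedge d\bar z^J.
\end{equation*}

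I define $\tilde A^{(p,q)}_m$ locally by sending this $u$ to $\mathop{{\sum}'}_{I,J} e^{m\phi(z)} v_{IJ}(z)\, dz^I \wedge d\bar z^J \otimes e^{\otimes m} \in \Omega^{p,q}(U, L^m|_U)$. The key verification of well-definedness is that under a change of holomorphic frame $e' = g e$ one has $\phi' = \phi - \log|g|$ and $\theta' = \theta + \arg g$, so the local coefficients transform as $v'_{IJ} = (|g|/g)^m v_{IJ}$; combining with $e^{m\phi'} = |g|^{-m} e^{m\phi}$ produces exactly the factor $g^{-m}$, matching the transformation law $f \mapsto g^{-m} f$ of local $L^m$-valued coefficients. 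Bijectivity is immediate from the explicit local inverse. For the intertwining relation, since in BRT coordinates $\overline{Z}_k = \partial/\partial\bar z_k - i(\partial\phi/\partial\bar z_k)\partial/\partial\theta$, a direct computation gives $\overline{Z}_k(e^{im\theta} v_{IJ}) = e^{im\theta} e^{-m\phi}\partial(e^{m\phi}v_{IJ})/\partial\bar z_k$, so that
\begin{equation*}
\overline{\partial}_b u = e^{im\theta} e^{-m\phi}\,\overline{\partial}\!\left(\mathop{{\sum}'} e^{m\phi} v_{IJ}\, dz^I\wedge d\bar z^J\right);
\end{equation*}
combined with $\overline{\partial}(e^{\otimes m}) = 0$ for the holomorphic frame, this is exactly the identity $\tilde A^{(p,q+1)}_m \overline{\partial}_b u = \overline{\partial}\,\tilde A^{(p,q)}_m u$.

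To pass to the orbifold $X = \widetilde{X}/G$, I observe that the whole construction uses only $G$-equivariant data (the CR structure of $\widetilde{X}$, the transversal $S^1$-action, and the holomorphic structure of $L$), so $\tilde A^{(p,q)}_m$ is $G$-equivariant and therefore restricts to a bijection between the $G$-invariant subspaces
\begin{equation*}
A^{(p,q)}_m \colon \Omega^{p,q}_m(X) = \Omega^{p,q}_m(\widetilde{X})^G \longrightarrow \Omega^{p,q}(M, L^m)^G = \Omega^{p,q}(M/G, L^m/G).
\end{equation*}
The intertwining descends, yielding $H^{p,q}_{b,m}(X) \simeq H^{p,q}(M/G, L^m/G)$, and the finite-dimensionality claim then follows from standard Hodge theory for the twisted $\overline{\partial}$-Laplacian on the compact complex orbifold $M/G$.

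The main obstacle is the bookkeeping in establishing well-definedness across BRT charts: one must confirm that the weight factor $e^{m\phi}$ and the $\theta$-coordinate shift conspire exactly to reproduce the transition cocycle of $L^m$. In \cite{CHT15} the case $p=0$ is cleaner because only the $d\bar z^J$ basis appears, while for general $p$ one must further track the $dz^I$ factor under holomorphic coordinate changes on $M$; however, since $\{dz_j\}$ transform identically on $\widetilde{X}$ and on $M$, these contributions cancel on both sides of the identification and do not enter the cocycle computation. Once this verification is complete, the rest of the proof reduces to the local computations sketched above together with the standard identification of orbifold sections with $G$-invariant sections on the cover.
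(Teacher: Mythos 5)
Your local computations are exactly the ones the paper carries out: the parametrization of the unit circle bundle by $(z,\theta)$, the transformation laws $\phi'=\phi-\log|g|$ and $\theta'=\theta+\arg g$ under a change of frame (the paper's (\ref{weight transform}) and (\ref{theta-eta})), the resulting cocycle $g^{-m}$ showing the local maps patch, and the identity $\overline{Z}_k(e^{im\theta}v)=e^{im\theta}e^{-m\phi}\partial_{\bar z_k}(e^{m\phi}v)$ giving the intertwining (the paper's (\ref{db})). Where you differ is the global organization. The paper never works on the manifold $M$ itself: it defines $A^{p,q}_m(D)$ chart by chart on the orbifold charts $(\widetilde U,G_{\widetilde U},\varphi)$ of $M/G$ — open sets of $\mathbb{C}^n$ with \emph{finite} uniformizing groups — and patches across charts, invoking $G$-invariance only to see that the locally defined objects descend. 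You instead prove the full manifold-level identification $\Omega^{p,q}_m(\widetilde X)\simeq\Omega^{p,q}(M,L^m)$ (the general-$p$ extension of \cite{CHT15}) and then take $G$-invariants. Note also that, despite the literal notation $\Omega^{(p,q)}(M,L^m)$ in the statement, the intended target (consistent with the ranges $p,q=0,\dots,n$ and with Theorem \ref{orbifold cohomology-1}) is the space of orbifold forms on $M/G$ valued in $L^m/G$; your reading is the natural one but forces the descent step, which is where the issue lies.

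The descent step contains a genuine gap. You assert $\Omega^{p,q}_m(X)=\Omega^{p,q}_m(\widetilde X)^G$ and $\Omega^{p,q}(M,L^m)^G=\Omega^{p,q}(M/G,L^m/G)$. Both identities are correct only when $G$ is finite. The theorem allows $G$ to be a compact Lie group of positive dimension acting locally freely (the paper even remarks that every orbifold arises as such a quotient by $O(m)$), in which case $\dim_{\mathbb C}M>n$: the CR structures $T^{1,0}\widetilde X$ and $T^{1,0}X$ then have different ranks, so a $(p,q)$-form on $X$ does not correspond to a $G$-invariant $(p,q)$-form on $\widetilde X$ of the same bidegree bookkeeping, and in any case invariance alone is insufficient — one must pass to \emph{basic} forms (invariant and annihilated by contraction with the fundamental vector fields of the $G$-action) and verify that your $\tilde A^{(p,q)}_m$ preserves basicness. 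The paper's chart-by-chart construction sidesteps all of this because local uniformizing groups of an orbifold are finite by definition. Your route is repairable along these lines, but as written the identification of orbifold forms with merely invariant forms fails for positive-dimensional $G$.
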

\begin{proof}
	This is a small modification of the proof for the case $p=0$ in \cite{CHT15}.
 The local orbifold structure of $L/G$ is the following commutative diagram:
 \begin{align*}
 \xymatrix{
 \widetilde{U}^*\ar[d]_{\pi_{\widetilde{U}^*}}\ar[r]^{\varphi^*}
 &U^*\ar[d]^{\pi_U}\\
 \widetilde{U}\ar[r]_{\varphi}
 & U,}
 \end{align*}
 where $\widetilde{U}^*=\widetilde{U}\times \mathbb{C}$, $\pi_{\widetilde{U}^*}$ and $\pi_U$ are the projetions, $(\widetilde{U}^*,G_{\widetilde{U}^*},\varphi^*)$ and $(\widetilde{U},G_{\widetilde{U}},\varphi)$ are orbifold charts of $L/G$ and $M/G$ respectively. Let $s$ be a local trivializing section of $L/G$ defined on $U$, it corresponds to a section $\widetilde{s}$ of $\widetilde{U}^*$ on $\widetilde{U}$ with the property $(g\circ \widetilde{s})(x)=\widetilde{s}(g\circ x)$ for $g\in G$ and $x\in \widetilde{U}$.   Let $|\widetilde{s}|^2_{h^L}=e^{-2\widetilde{\psi}}$ on $\widetilde{U}$. Since $h^L$ is a $G$-invariant Hermitian metric on $L$, we have that $\widetilde{\psi}$ is a $G$-invariant function on $\widetilde{U}$ and   $G$ acts on $\widetilde{X}$ naturally.  The local orbifold structure of $X$ is the following commutative diagram:
 \begin{align*}
 \xymatrix{
\widetilde{U}\times S^1  \ar[d]_{\pi_{\widetilde{U}}}\ar[r]^{\varphi^*}
 & (\widetilde{U}\times S^1)/G_{\widetilde{U}^*}\ar[d]^{\pi_U}\\
 \widetilde{U}\ar[r]_{\varphi}
 & U,}
 \end{align*}
 where $(\widetilde{U}\times S^1)/G_{\widetilde{U}^*}\subset X$ and $U\subset M/G$.

  We identify $\widetilde{U}$ with an open set of $\mathbb{C}^n$, and introduce  holomorphic coordinates $z=(z_1,\cdots, z_n)$ on $\widetilde{U}$.  We have the local  diffeomorphism
  \begin{align*}
  \tau:\widetilde{U}\times (-\varepsilon_0, \varepsilon_0)&\rightarrow  X, ~~~~0<\varepsilon_0\leq \pi,\\
  (z,\theta)&\mapsto e^{-\widetilde{\psi}}\widetilde{s}^*(z)e^{-i\theta}.
  \end{align*}
We understand the image of the map $\tau$ is contained in  an orbifold chart of $X$. \\

  Put $D=\widetilde{U}\times (-\varepsilon_0,\varepsilon_0) $ as a canonical coordinate patch with $(z,\theta)$ canonical coordinates  (with respect to the trivialization $\widetilde{s}$ on $\widetilde{U}$) such that on $D$, the global real vector field $T$ induced by the $S^1$-action is $\frac{\partial}{\partial\theta}$  and
  \begin{align}\label{tangent}
  T^{1,0}\widetilde{U}&=\{\frac{\partial}{\partial z_j}-i\frac{\partial\widetilde{\psi}}{\partial z_j}(z)\frac{\partial}{\partial \theta}; j=1,2,\cdots, n\},\\
  T^{0,1}\widetilde{U}&=\{\frac{\partial}{\partial \overline{z}_j}+i\frac{\partial\widetilde{\psi}}{\partial \overline{z}_j}(z)\frac{\partial}{\partial \theta}; j=1,2,\cdots, n\},\notag
  \end{align}
and
\begin{align}\label{cotangent}T^{*1,0}\widetilde{U}=\{dz_j;j=1,2,\cdots,n\}, T^{*0,1}\widetilde{U}=\{d\overline{z}_j;j=1,2,\cdots,n\}.
\end{align}

We first define a local map $A^{p,q}_m(D):\Omega^{p,q}_m(D)\rightarrow \Omega^{p,q}(\widetilde{U},L^m)$ as follows
\begin{align*}
A^{p,q}_m(D):\Omega^{p,q}_m(D)&\rightarrow \Omega^{p,q}(\widetilde{U},L^m)\\
u(z,\theta)&\mapsto  \widetilde{s}^m(z)e^{m\widetilde{\psi}(z)}u(z,\theta)e^{-im\theta}.
\end{align*}
Note that $u(z,\theta)e^{-im\theta}$ is a form independent of $\theta$. It is easy to see that this map is a bijective.   We will prove that $A^{p,q}_m(D)$ can be patched to a global operator $A^{(p,q)}_m:\Omega^{(p,q)}_m(X)\rightarrow \Omega^{(p,q)}(M/G, L^m/G)$.

Let $s$ and $s_1$ be local trivializing sections of $L/G$ on an open set $U\subset M/G$, and $\widetilde{s}$ and $\widetilde{s}_1$ be the corresponding sections on $\widetilde{U}$.  Let $(z,\theta)\in \mathbb{C}^n\times \mathbb{R}$ and $(z,\eta)\in \mathbb{C}^n\times \mathbb{R}$ be canonical coordinates of $D$ with respect to $\widetilde{s}$ and $\widetilde{s}_1$ respectively.  Set $|\widetilde{s}|^2_{h^L}=e^{-2\widetilde{\psi}}$ and $|\widetilde{s}_1|^2_{h^L}=e^{-2\widetilde{\psi}_1}$. It suffices to prove that
\begin{align}\label{patch condition}
\widetilde{s}^m(z)e^{m\widetilde{\psi}(z)}u(z,\theta)e^{-im\theta}=\widetilde{s}_1^m(z)e^{m\widetilde{\psi}_1(z)}u_1(z,\eta)e^{-im\eta}
\end{align}

Observe that the above equality is invariant under the $G$-action.

Let $\widetilde{s}_1=g\widetilde{s}$ for $g$ a unit on $\widetilde{U}$. It is easy to find that
\begin{align}\label{weight transform}
\widetilde{\psi}_1=\widetilde{\psi}-\log|g|.
\end{align}

Note that if $\tau(z,\theta)=\tau_1(z,\eta)$, then with a certain branch of square root, we have
\begin{align}\label{theta-eta}
 e^{-i\theta}\big(\frac{g(z)}{\overline{g}(z)}\big)^{\frac{1}{2}}=e^{-i\eta}.
\end{align}

From (\ref{weight transform}) and (\ref{theta-eta}), we can easily verify that (\ref{patch condition}) holds.  Thus one proves that $A^{p,q}_m(D)$ can be patched to a global operator $A^{(p,q)}_m:\Omega^{(p,q)}_m(X)\rightarrow \Omega^{(p,q)}(M/G, L^m/G)$.

In the following, we  prove that  $A^{(p,q+1)}_m\overline{\partial}_{b,m}=\overline{\partial}A^{(p,q)}_m$ on $\Omega^{(p,q)}_m(X)$. Denote by $\widehat{u}(z)=u(z,\theta)e^{-im\theta}$,  which was previously known to be independent of $\theta$. From (\ref{tangent}) and (\ref{cotangent}),  by direct computations, one can see that
\begin{align}\label{db}
\overline{\partial}_b \widetilde{u}=\sum_{j=1}^n e^{im\theta} d\overline{z}_j\wedge(\frac{\partial\widehat{u}}{\partial \overline{z}_j}(z)+m\frac{\partial\widetilde{\psi}}{\partial \overline{z}_j}(z)\widehat{u}(z)).
\end{align}

Then  we have
\begin{align*}
A^{p,q+1}_m(D)(\overline{\partial}_bu)&=\widetilde{s}^m(z)e^{m\widetilde{\psi}(z)}(\sum_{j=1}^n d\overline{z}_j\wedge(\frac{\partial\widehat{u}}{\partial \overline{z}_j}(z)+m\frac{\partial\widetilde{\psi}}{\partial \overline{z}_j}(z)\widehat{u}(z))\\
&=\widetilde{s}^m(z)\overline{\partial}(e^{m\widetilde{\psi}}\widehat{u}(z))=\overline{\partial}A^{(p,q)}_m u.
\end{align*}
The last equality follows from the fact that $\widetilde{s}$ is holomorphic.

Since $u$ is  $G_{\widetilde{U}^*}$ invariant, $\widetilde{s}$ and $\widetilde{\psi}$ are $G_{\widetilde{U}}$ invariant, and the actions commutes with $\overline{\partial}$, we can see that $A^{p,q+1}_m(D)(\overline{\partial}_bu)$ is a well-defined local section of $L^m$ over $\widetilde{U}$, which can be   patched  together to a global section of $L^m/G$ over $M/G$ by the previous proof.

The proof of Theorem \ref{identification} is completed.
\end{proof}

Combining Theorem \ref{main theorem orbifold} and Theorem \ref{identification}, we can get the following
\begin{thm}[= Theorem \ref{orbifold cohomology-1}]\label{orbifold cohomology}
Let $M$ be a compact complex manifold and $G$ a compact Lie group. Suppose that $G$ acts on $M$ analytically, locally free and dim$_{\mathbb{C}}M/G=n$. Let $(L,h^L)$ be a $G$-invariant holomorphic Hermitian line bundle over $M$. Suppose that $L$ admits a locally free $G$-action compatible with $M$ and the curvature of $L$ is semi-positive. Then  we have that for $m$ sufficiently large,
\begin{align*}
\dim H^{n,q}(M/G, L^m/G)\leq Cm^{n-q},
\end{align*}
where $C$ is a constant independent of $m$.
\end{thm}

\begin{rmk}The above theorem corresponds to   Berndtsson's estimate in the orbifold case, which answers a folklore  open question, say generalizing Berndtsson's estimate to the orbifold setting,  informed to us by Hsiao.
	\end{rmk}

\subsection*{Acknowledgement}
The authors would like to  thank  Prof. Jih-Hsin Cheng and Prof. 
Chin-Yu Hsiao for their nice talks given at the Institute of
Mathematics in Chinese Academy of Sciences and for their valuable comments on
this paper.  The first author would like to  thank Dr. Xiaoshan Li
for helpful discussions on the Serre-type duality theorem.

\end{document}